\theoremstyle{definition} \newtheorem{definition}{Definition}[section]
\theoremstyle{plain} \newtheorem{theorem}[definition]{Theorem}
\theoremstyle{plain} \newtheorem{corollary}[definition]{Corollary}
\theoremstyle{plain} \newtheorem{lemma}[definition]{Lemma}
\theoremstyle{definition} \newtheorem{remark}[definition]{Remark}
\theoremstyle{plain}
\newtheorem{proposition}[definition]{Propostion}
\theoremstyle{plain} 
\theoremstyle{plain} 
\theoremstyle{definition}
\newtheorem{example}[definition]{Example}
\theoremstyle{definition}
\newcommand{\im}{\mbox{im}}
\newcommand{\gen}[1]{\langle #1 \rangle}
\newcommand{\T}[1]{\mathcal{#1}}
\newcommand{\C}[2]{\mathfrak{#1}_{#2}}
\newcommand{\CCn}{\mathfrak{C}_{n}}
\newcommand{\CCnr}{\mathfrak{C}_{n,r}}
\newcommand{\xn}{X_{n}}
\newcommand{\xns}{X_{n}^{\ast}}
\newcommand{\xnp}{X_{n}^{+}}
\newcommand{\id}{\mbox{id}}
\newcommand {\core}[1]{\mbox{Core}(#1)}
\newcommand{\N}{\mathbb{N}}
\newcommand{\tr}[1]{\pi_{#1}}
\newcommand{\out}[1]{\lambda_{#1}}
\begin{document}
\title{ Conjugate subgroups and overgroups of $V_n$}
\author{C. Donoven, and F. Olukoya}
\maketitle

\begin{abstract}
We describe subgroups and overgroups of the generalised Thompson groups $V_n$ which arise via conjugation by rational homeomorphisms of Cantor space.  We specifically consider conjugating $V_n$ by homeomorphisms induced by synchronizing transducers and their inverses.   Our descriptions of the subgroups and overgroups use properties of the conjugating transducer to either restrict or augment the action of $V_n$ on Cantor space. 
\end{abstract}
\section{Introduction}
The Higman-Thompson groups $G_{n,r}$, introduced in \cite{GHigman}, are groups of homeomorphisms of a Cantor space $\mathfrak{C}_{n,r}$ . In \cite{BlkYMaisANav}, the automorphisms of $G_{n,r}$ are realised as conjugation by homeomorphisms of Cantor space. Specifically, \cite{BlkYMaisANav} shows that the subgroup of the rational group $\mathcal{R}_{n,r}$ \cite{GriNekSus} consisting of homeomorphisms induced by \emph{bi-synchronizing} transducers corresponds exactly to the automorphism group of $G_{n,r}$.  This paper explores the consequences of relaxing this bi-synchronizing condition.

 We study \emph{one-way synchronizing} transducers. It follows from \cite{BlkYMaisANav} that conjugation of the generalised Thompson groups $V_n = G_{n,1}$ by such transducers and their inverses, respectively,  produce subgroups and overgroups of $V_n$. Our main results describe these conjugate subgroups and overgroups.  

The generalised Thompson groups $V_n$ are groups of homeomorphisms of $n$-ary Cantor space that act on infinite words by exchanging finite prefixes. The subgroups we produce via conjugation restrict this action by limiting which finite prefixes can be exchanged. This restriction is determined by the inverse of the conjugating transducer. Conversely, the overgroups we produce via conjugation augment the action of $V_n$ on Cantor space.  Elements in the overgroups exchange finite prefixes, but are then allowed to act non-trivially on the infinite suffix.  The action on the suffixes is determined by groups of homeomorphisms of Cantor space associated to the conjugating transducer.

We begin by discussing the conjugate subgroups, before moving on to overgroups. We briefly introduce a few notions to state our main results. Note that we provide a thorough discussion of relevant definitions in Section~\ref{Section: preliminaries} and recommend \cite{GriNekSus} as a useful introduction to transducers. Let $X_n := \{ 0, 1, \ldots, n-1 \}$ be a finite set of symbols and let $\sim$ be an equivalence relation on $X_n^+$, the set of non-empty finite words over $X_n$.  Under certain conditions on $\sim$, one can consider the subgroup of $V_n$ that only exchanges prefixes related by $\sim$. We call this subgroup $V_{\sim}$.   

One method for producing such equivalence relations is to use automata. (Note that our use of the words transducer and automaton is specific. In essence, an automaton is a `machine' that reads words, and a transducer is an automaton that outputs words.) Let $A=\langle X_n, Q, \pi, q_0\rangle$ be a finite automaton with start state $q_0$. One can define $\sim_A$ to be the equivalence relation on $X_n^+$ such that two words are related if and only if when they are read by $A$, the active state of the automaton is the same. Although this relation does not necessarily give rise to a subgroup of $V_{n}$, we can `collapse' $A$ to get an automaton that does produce such a relation. We discuss this process in Section~\ref{Sec: subgroups}.     

 Our main result describing the conjugate subgroups of $V_n$ is the following theorem.
 
 \begin{theorem}\label{thm: introsubgroups}
 Let $T_{q_0} = \gen{X_n, Q, \pi, \lambda}$ be an initial one-way synchronizing transducer with inverse $S_{(\epsilon, q_0)} = \gen{X_n, Q', \pi', \lambda'}$. Let $A = \gen{X_n, Q', \pi', q_0}$ be the automaton obtained from $S$ by forgetting outputs. Then  $V_n^{T_{q_0}}  = V_{\sim_{A}}$.
 \end{theorem}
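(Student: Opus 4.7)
The plan is to prove the two inclusions $V_n^{T_{q_0}}\subseteq V_{\sim_A}$ and $V_{\sim_A}\subseteq V_n^{T_{q_0}}$ separately. The bridge between them is the observation that, since $A$ is $S$ with its outputs forgotten, $u\sim_A v$ holds exactly when $S_{(\epsilon,q_0)}$ reaches the same state upon reading $u$ and $v$; combined with the one-way synchronizing hypothesis on $T_{q_0}$, this is precisely the condition under which $S$ processes the infinite continuations after $u$ and after $v$ identically. Morally, $u$ and $v$ are interchangeable as prefixes from the point of view of $S$.

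For $V_n^{T_{q_0}}\subseteq V_{\sim_A}$, I would take $h=T_{q_0}^{-1}\, g\, T_{q_0}$ with $g\in V_n$ given by a prefix exchange $a_i\leftrightarrow b_i$. Using one-way synchronization, I refine the antichains to $\{a_i c_{ij}\}$ and $\{b_i c_{ij}\}$ with $c_{ij}$ long enough that the states of $T$ reached after reading $a_i c_{ij}$ and $b_i c_{ij}$ from $q_0$ are both equal to a common state $q_{ij}$. For each $(i,j)$ there are then unique prefixes $u_{ij}, v_{ij}\in X_n^+$ with $T_{q_0}(u_{ij}\xi)=a_i c_{ij}\, T_{q_{ij}}(\xi)$ and $T_{q_0}(v_{ij}\xi)=b_i c_{ij}\, T_{q_{ij}}(\xi)$, and a direct calculation gives $h(u_{ij}\xi)=v_{ij}\xi$. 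It remains to check that reading $u_{ij}$ or $v_{ij}$ in $A$ from $q_0$ lands in the same state of $S$, namely the state corresponding to empty pending output and $T$-state $q_{ij}$; this follows because $S$ tracks both how much output has been committed and the current $T$-state, and at $u_{ij}$ (respectively $v_{ij}$) both components agree.

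For the converse $V_{\sim_A}\subseteq V_n^{T_{q_0}}$, take $h$ realised by a prefix exchange $u_j\leftrightarrow v_j$ with $u_j\sim_A v_j$, so $S_{(\epsilon,q_0)}$ reaches a common state $p_j$ after reading both words. Let $a_j$ and $b_j$ be the outputs of $S_{(\epsilon,q_0)}$ on $u_j$ and $v_j$. Using that $S$ is a homeomorphism with inverse $T_{q_0}$, the images $\{a_j\}$ and $\{b_j\}$ form antichains covering $X_n^\omega$, so they define an element $g\in V_n$ via the exchange $a_j\leftrightarrow b_j$. A direct verification then gives $T_{q_0}^{-1}\, g\, T_{q_0}=h$: the equality of the terminal states $p_j$ guarantees that the action on suffixes is the same on both sides of the exchange, so no mismatch arises between $S$ applied to $u_j\xi$ and to $v_j\xi$.

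The main obstacle is the bookkeeping around the pending-output component of the states of $S$: both arguments require that the refinements or prefix codes be chosen so that the corresponding $S$-states have the form $(\epsilon,q)$, i.e.\ empty pending output, so that $S$ really begins inverting $T_q$ on the subsequent input rather than flushing leftover letters. This is also where the collapse procedure used to define $V_{\sim_A}$ should enter: one may need to identify $S$-states that are operationally equivalent on infinite tails in order to align the prefix-code combinatorics on the $V_n$ side with the state partition of $A$ on the $V_{\sim_A}$ side.
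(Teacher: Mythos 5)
Your overall strategy matches the paper's: both directions are proved by analysing the conjugate of a prefix exchange, using the synchronizing property of $T$ to force a common active state after sufficiently long inputs, and then translating this into a statement about states of $S$ (equivalently, states of $A$). For a \emph{synchronous} one-way synchronizing transducer your argument is essentially the paper's Theorem~\ref{thm:subgroup_synchronous_case}. However, Theorem~\ref{thm: introsubgroups} is the general, asynchronous statement (Theorem~\ref{thm:subgroups_asynchronous_case} in the paper), and there your proposal has genuine gaps that are not just bookkeeping.

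In the forward direction, the central assertion that ``a direct calculation gives $h(u_{ij}\xi)=v_{ij}\xi$'' is only correct for $\xi \in \im(T_{q_{ij}})$. Writing $u_{ij}=\lambda(a_ic_{ij},q_0)$ and $v_{ij}=\lambda(b_ic_{ij},q_0)$, one does get $h\bigl(u_{ij}(\chi)T_{q_{ij}}\bigr)=v_{ij}(\chi)T_{q_{ij}}$, but in the asynchronous case $T_{q_{ij}}$ need not be surjective, so $u_{ij}\,\im(T_{q_{ij}})$ is a proper clopen subset of $[u_{ij}]$ and the cones $[u_{ij}]$ need not even form an antichain. Thus $h$ is not visibly a prefix exchange along the code you constructed, and the problem of identifying the \emph{actual} prefix exchange (so that one can then ask whether its antichains are $\sim_A$-compatible) is precisely what drives the paper's division into Case 1, Case 2, and the further subcases: whether $h$ replaces $u\delta$ or a proper prefix of $u$, and whether the preimage $([u\delta_1])S_{(\epsilon,q_0)}$ lies entirely in $[\alpha]$ or mixes $[\alpha]$ and $[\beta]$. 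None of that appears in the proposal, and the final ``both components agree'' claim about landing on a state of the form $(\epsilon,q_{ij})$ is simply not available in general: after reading $u_{ij}$ the pending-output component of the $S$-state can be nontrivial, and whether the two reached states are identified requires the full $\T{L}_{q}$-based computation that occupies Subcases 1 and 2.

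In the reverse direction, two points need repair. First, the sets $\{a_j\}$ and $\{b_j\}$ of $S$-outputs on the $u_j$ and $v_j$ need not be antichains when $S$ is asynchronous (outputs on incomparable inputs may coincide or be comparable); what is true is that $a_j\,\im(S_{p_j})$ and $b_j\,\im(S_{p_j})$ partition $\CCn$, and one must refine to cones inside these clopen sets before claiming an element of $V_n$. Second, $u_j\sim_A v_j$ does not mean $S$ literally reaches the same state after $u_j$ and after $v_j$; $\sim_A$ is defined via the collapsing procedure on $A$, so the reached states are only identified after $k$ collapsing steps, i.e.\ they transition alike on all words of length $\geq k$ (Lemma~\ref{lemma:equivalent at k means transition alike on Xk}). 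The paper's reverse direction threads precisely this through the computation by processing an extra length-$k$ block before matching suffixes; the proposal leaves it as a remark that ``the collapse procedure should enter'' without showing how it closes the argument.

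So: right shape, correct for the synchronous subcase, but the asynchronous case — which is what the theorem claims — requires the case analysis of $h$'s true prefix-replacement structure and the $\T{L}$-based state comparison that the proposal does not supply.
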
 
 
Theorem~\ref{thm: introsubgroups}, demonstrates that conjugates of $V_n$ by one-way synchronizing transducers are subgroups.  Conversely, conjugating $V_n$ by the inverse of a one-way synchronizing transducer produces overgroups.  These overgroups, in essence, combine the action of $V_n$ with other groups of homeomorphisms of Cantor space.  We describe these augmenting groups next, beginning with the synchronous case. 
 
Given an invertible, synchronous transducer $T$, the initial transducer $T_q$ induces a homeomorphism of $\CCn$ for each state $q$ of $T$. We can consider the subgroup of the rational group generated by these homeomorphisms, denoted by $\T{G}(T)$. See \cite{Nek_selfsimilargroups} for more details on these groups, which belong to a class called self-similar groups. By adding extra generators to $V_n$ that act as elements of $\T{G}(T)$, we construct the overgroup $V_n({\T{G}(T)})$, whose definition is in Section~\ref{Section: overgroups}.  These groups, which combine the actions of $V_n$ and self-similar groups, are known as Nekrashevych groups and were studied in \cite{Nek04}.  A famous example built using the Grigorchuk group was examined by R\"{o}ver in \cite{Rover_99}. 

If $T$ is an asynchronous transducer, however, then there exists states of $T$ that do not induce homeomorphisms of $\CCn$.  In Section~\ref{Section: overgroups}, we generalise the construction of $V_n(\T{G}(T))$ to treat this case. Specifically, we associate to an arbitrary transducer $T$, satisfying certain other conditions, a group $\T{H}(T)$ of rational homeomorphisms of $\CCn$.  It is important to note that when $T$ is invertible with inverse $S$, $\T{H}(TS)$ is well defined and $V_n\leq \T{H}(TS)$. Also, if $T$ is synchronous, then the group $\T{H}(T) = V_n({\T{G}(T)})$. 
 
Our results characterising the conjugate overgroups $V_n$ are presented in the following theorem.

\begin{theorem} 
Let $T_{q_0}= \gen{X_n, Q_{T}, \pi_{T}, \lambda_{T}}$ be an initial one-way synchronizing transducer with inverse $S_{(\epsilon, q_0)}$. Then $V_n^{{T^{-1}_{q_0}}} = \T{H}(TS)$. If $T$ is synchronous, we have $V_n^{{T^{-1}_{q_0}}} = V_n({\T{G}(TS)})$.
\end{theorem}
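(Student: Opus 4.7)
The plan is to prove the equality $V_n^{T^{-1}_{q_0}} = \T{H}(TS)$ by establishing both containments, and then deduce the synchronous statement from the already-stated identification $\T{H}(T)=V_n(\T{G}(T))$ for synchronous $T$. Under the convention $g^h = h^{-1}gh$, the conjugate of $v\in V_n$ is $T_{q_0}\, v\, S_{(\epsilon,q_0)}$, so the central object of study is the composition $T v S$ viewed as a homeomorphism of $\CCn$.

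For the inclusion $V_n^{T^{-1}_{q_0}} \subseteq \T{H}(TS)$, I would take $v\in V_n$ represented by a prefix-exchange $v:u_i w \mapsto u'_i w$ for a finite complete antichain $\{u_i\}\subset X_n^+$. On an infinite word $x\in\CCn$, the transducer $T_{q_0}$ reads a prefix $p$ of $x$, outputs $T_{q_0}(p)$, and transitions to some state. Using the one-way synchronizing hypothesis for $T$, I can refine the antichain of inputs so that for every branch the state reached by $T$ is forced to a specific synchronizing state $q_i$, and symmetrically refine on the $S$-side so that after $v$ rewrites $T_{q_0}(p)$ to begin with some $u'_j$, the transducer $S_{(\epsilon,q_0)}$ reads $u'_j$ and arrives at a well-defined state. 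On the remaining infinite tail the composition $T v S$ then acts exactly as the homeomorphism induced by a state of the product transducer $TS$. This decomposition presents $T v S$ as a finite prefix rewrite followed on each branch by an element of the group generated by states of $TS$, which is the defining form of elements of $\T{H}(TS)$ as set up in Section~\ref{Section: overgroups}.

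For the reverse inclusion $\T{H}(TS)\subseteq V_n^{T^{-1}_{q_0}}$, I would verify that the natural generating set of $\T{H}(TS)$ (namely a copy of $V_n$ together with the rational homeomorphisms induced by states of $TS$) lies inside the conjugate group. The copy of $V_n$ is visible because conjugation is a bijection and $V_n$ sits inside $\T{H}(TS)$, as remarked just before the theorem. For each state homeomorphism $h$ of $TS$, I would exhibit an explicit $v_h\in V_n$ such that $T_{q_0}\, v_h\, S_{(\epsilon,q_0)} = h$: one chooses $v_h$ to be the prefix-exchange that, on each input prefix synchronizing $T$ to the chosen state, replaces $T_{q_0}(p)$ by the input prefix that drives $S$ into the matching state, so the middle layer undoes the bookkeeping and leaves the suffix action exactly as that of the chosen state of $TS$. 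Group closure then gives the full inclusion.

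The main obstacle will be the combinatorial bookkeeping in the forward direction: one has to show that the antichain refinement using the synchronizing property terminates uniformly in the choice of $v$, and that the resulting tail maps genuinely belong to the rational group manufactured from $TS$, rather than to some strictly larger class. An additional subtlety in the asynchronous case is that individual states of $T$ and $S$ need not induce homeomorphisms of $\CCn$, so the suffix action must be repackaged as a state of the product $TS$, and one needs the structural properties of $TS$ that make $\T{H}(TS)$ well defined. The synchronous case is then a short step: when $T$ is synchronous, the product $TS$ is synchronous, every state of $TS$ induces a homeomorphism, $\T{G}(TS)$ is the self-similar group generated by those states, and the identification $\T{H}(TS) = V_n(\T{G}(TS))$ recorded in the excerpt converts the first equality into the second.
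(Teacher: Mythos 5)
Your forward direction matches the paper's in spirit: after a uniform finite prefix (chosen using the synchronizing hypothesis plus the fact that the middle transducer representing $v$ has trivial core), the composition $T_{q_0}\,v\,S_{(\epsilon,q_0)}$ transitions to a state of the product $TS$ on every branch, which exhibits the conjugate as a completion of $TS$. Two small imprecisions to note: in the asynchronous case the tail map $(TS)_{(t,s)}$ is \emph{not} a homeomorphism on its own, so it is not quite right to say the composition ``acts as the homeomorphism induced by a state of $TS$''; it is the whole viable combination over the maximal antichain, not a single state, that yields a homeomorphism. This is cosmetic in the forward direction, since the paper's notion of a completion is precisely what you end up describing.

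The reverse inclusion is where there is a genuine gap. You propose to verify the inclusion on ``the natural generating set of $\T{H}(TS)$ (namely a copy of $V_n$ together with the rational homeomorphisms induced by states of $TS$).'' That is the correct generating set only when $TS$ is synchronous. When $T$ is asynchronous, $TS$ has injective but non-surjective states, and (as the paper observes before Proposition~\ref{proposition: subgroup of completions}) the decomposition of a completion into an element of $V_n$ times cone maps $[\alpha]_g$ fails: the tuple of output prefixes in a viable combination cannot be a maximal antichain when some $p_i$ is a non-homeomorphism state, so $\T{H}(TS)$ is \emph{not} generated by $V_n$ and state homeomorphisms $[\alpha]_g$. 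You must instead take an arbitrary completion $h$ of $TS$ and show directly that $h^{T_{q_0}}\in V_n$. The paper does this by re-bracketing $S_{q'}(TS)_{(t,s)}T_q$ as $(ST)_{(q',t)}(ST)_{(s,q)}$ and then proving, using the synchronizing hypothesis, that each $(ST)_{(a,b)}$ becomes a pure prefix replacement after reading a uniform number of letters (because it eventually transitions into a state of $T^{-1}_{q_0}T_{q_0}$, the identity). That re-bracketing step and the associated prefix-replacement claim are the pieces your sketch is missing; the idea of ``choosing $v_h$ to undo the bookkeeping'' does not by itself say why the residual suffix action becomes trivial, and it cannot even be formulated for a general completion $h$ that is not of the form $[\alpha]_g$. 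The synchronous deduction at the end of your proposal is fine once the main equality is established.
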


If $T$ is bi-synchronizing and invertible, then $\T{H}(TS) = V_n$ and, as the relation $\sim_{A}$ is trivial, $V_{\sim_{A}}=V_n$ also.

In Section~\ref{Section: Synch conditions}, we also provide an interesting characterisation of the group $\T{G}(TS)$, when $T$ is synchronous, invertible, and one-way synchronizing with inverse $S$.

\begin{proposition}\label{Proposition: Intro}
Let $T = \gen{X_n, Q, \pi, \lambda}$ be a finite minimal, one-way synchronizing, synchronous transducer and let $S = \gen{X_n,Q^{-1}, \pi', \lambda'}$ be its inverse. Also let $0 < k \in \mathbb{N}$ be the minimal synchronizing length of $T$. The automaton group generated by $TS$ is isomorphic the subgroup of the symmetric group $S_{n^k}$ generated by the permutations $\lambda_{TS}(\centerdot, p): X_n^k \to X_n^k$, where $p$ a state of $TS$ and $\lambda_{TS}$ is the transition function for $TS$. In particular, this group is finite.
\end{proposition}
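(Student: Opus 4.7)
My plan is to prove the proposition by showing that every element of $\T{G}(TS)$ is what I will call a $k$-cap homeomorphism of $\CCn$, meaning a homeomorphism of the form $u\alpha \mapsto \sigma(u)\alpha$ for some fixed $\sigma \in S_{n^k}$, where $u \in X_n^k$ and $\alpha \in X_n^{\omega}$. The collection of such homeomorphisms forms a subgroup of $\Homeo(\CCn)$ naturally isomorphic to $S_{n^k}$ via $g \mapsto \sigma_g$, so this will immediately embed $\T{G}(TS)$ into $S_{n^k}$ and yield the finiteness assertion.

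Since the $k$-cap property is closed under composition and inversion, it suffices to verify it for the generators $(TS)_p$ as $p$ ranges over the states of $TS$. Writing $p = (q, r)$, synchronicity of $TS$ gives the recursion $(TS)_p(u\alpha) = \lambda_{TS}(u, p) \cdot (TS)_{p \cdot u}(\alpha)$ for every $u \in X_n^k$, so the task is to show that the state $p \cdot u = (\pi_T(u, q), \pi_S(\lambda_T(u, q), r))$ reached after reading $u$ induces the identity homeomorphism. A state $(q', r')$ of $TS$ induces the identity if and only if $r' = (q')^{-1}$ under the canonical bijection $Q \to Q^{-1}$ coming from the inversion relation $T_{q'} \mapsto S_{(q')^{-1}} = T_{q'}^{-1}$.

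The main obstacle is verifying this alignment after reading an arbitrary $u \in X_n^k$. By the one-way synchronizing hypothesis, the first coordinate $\pi_T(u, q)$ depends only on $u$; denote this common value by $q[u]$. Writing $r = s^{-1}$ for some $s \in Q$ and setting $u' := \lambda_S(\lambda_T(u, q), s^{-1})$, the fact that $S_{s^{-1}}$ inverts $T_s$ allows one to rewrite the second coordinate as $(q[u'])^{-1}$. The alignment condition then reduces to the equality $q[u] = q[u']$, which must hold whenever $\lambda_T(u, q) = \lambda_T(u', s)$ with $u, u' \in X_n^k$ and $q, s \in Q$. I expect this to be the crux of the proof: the argument should exploit minimality of $T$ and the fact that $k$ is the minimal synchronizing length, forcing any two length-$k$ inputs that $T$ processes into the same output word to leave $T$ in the same state.

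Granted all of the above, the map $\Phi: \T{G}(TS) \to S_{n^k}$ sending a homeomorphism to its underlying permutation on $X_n^k$ is a well-defined injective group homomorphism, since a $k$-cap homeomorphism is determined by its action on $X_n^k$. The image of $\Phi$ contains each generator's image $\Phi((TS)_p) = \lambda_{TS}(\centerdot, p)$ and is generated by these, so it coincides with the subgroup of $S_{n^k}$ described in the statement, completing the proof.
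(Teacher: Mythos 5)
The central step of your plan---that every generator $(TS)_p$ of $\T{G}(TS)$ is a $k$-cap homeomorphism, reduced by you to the claim that $\lambda_T(u,q)=\lambda_T(u',s)$ with $u,u'\in X_n^k$ forces $\pi_T(u,q)=\pi_T(u',s)$---is false. The synchronizing hypothesis says that equal \emph{inputs} of length $k$ force equal image states; it says nothing about equal \emph{outputs}. The two-state transducer of Example~\ref{example:synchronizing} (alphabet $X_2$, states $a,b$, $\lambda(i,a)=i$, $\lambda(i,b)=i+1\bmod 2$, $\pi(0,x)=a$, $\pi(1,x)=b$, synchronizing at level $k=1$) is a counterexample: $\lambda(0,a)=0=\lambda(1,b)$, yet $\pi(0,a)=a\neq b=\pi(1,b)$. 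Correspondingly, the state $(a,b^{-1})$ of $TS$ does not reach the identity state after reading one symbol; as noted in Example~\ref{example:synchronizingovergroup}, it induces the ``flip every bit'' homeomorphism of $\mathfrak{C}_2$, which is not a $1$-cap homeomorphism. So your embedding into $S_{n^k}$ by reading off ``the'' underlying permutation of a $k$-cap homeomorphism cannot even get started on the generators.

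The asymmetry your plan misses is exactly the one-way synchronicity: since $S$ is not synchronizing, the $S$-coordinate of a state of $TS$ need not align with the inverse of the $T$-coordinate after $k$ steps. What is true---and what the paper uses---is that every state of the \emph{reversed} product $ST$ is a $k$-cap homeomorphism: reading $\alpha\in X_n^k$ through $S_{p^{-1}}$ sends the $S$-part to $r^{-1}:=\pi'(\alpha,p^{-1})$, and the subsequent pass through $T_q$ sends the $T$-part to $\pi(\lambda'(\alpha,p^{-1}),q)=\pi(\lambda'(\alpha,p^{-1}),p)=r$ because $T$ \emph{is} synchronizing, so the resulting state $(r^{-1},r)$ is the identity. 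The paper then defines the map $\T{G}(TS)\to S_{n^k}$ by recording the permutation induced on length-$k$ prefixes (well-defined and a homomorphism since $T$ and $S$ are synchronous, and the permutation factors as in your formula $\bar a_1\bar b_1^{-1}\cdots\bar a_m\bar b_m^{-1}$), and proves injectivity by rebracketing: if $T_{a_1}S_{b_1^{-1}}\cdots T_{a_m}S_{b_m^{-1}}$ lies in the kernel, then $S_{b_1^{-1}}T_{a_2}\cdots T_{a_m}$ is a product of states of $ST$, hence a $k$-cap homeomorphism, and its permutation $\bar b_1^{-1}\bar a_2\cdots\bar a_m$ equals $\bar a_1^{-1}\bar b_m$, forcing it to equal the $k$-cap homeomorphism $S_{a_1^{-1}}T_{b_m}$; the whole word then collapses to $T_{a_1}S_{a_1^{-1}}T_{b_m}S_{b_m^{-1}}=\mathrm{id}$. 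Your generator-by-generator reduction needs to be replaced with a global argument of this form.
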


The outline of the paper is as follows. In Section~\ref{Section: preliminaries}, we set up our notation, introduce transducers, and provide a framework for our results. In Section~\ref{Section: Synch conditions}, we prove some preliminary results including Proposition~\ref{Proposition: Intro}. Section~\ref{Sec: subgroups} describes the subgroups resulting from Proposition~\ref{pro:conjproducessubgroups} and Section~\ref{Section: overgroups} describes the overgroups from Corollary~\ref{conjproducesovergroups}. We finish our work in Section~\ref{Section: H(T)}, investigating the simplicity of the groups $\T{H}(T)$, specifically giving conditions on when their derived subgroups are simple.  The article concludes in Section~\ref{Section: Questions} with some open problems. 
\newline

{\bf{Acknowledgements.}} The authors  would like to thank Collin Bleak and Matthew Brin for their support and feedback on early versions of the paper.  The second author also acknowledges the financial support of The Carnegie Trust for the Universities of Scotland.
\section{Preliminaries}\label{Section: preliminaries}
We begin with a brief description of transducers and their properties.  For more detail, see \cite{GriNekSus}, from which we draw notation and the inversion algorithm.

Throughout this paper, we use the alphabet $X_n=\{0,1,\ldots, n-1\}$, so that Cantor space can be expressed as $\CCn=X_n^{\omega}$. We shall denote by $X_n^{*}$ the set of all finite strings (including the empty string) in the alphabet $X_n$. The empty string we shall denote $\epsilon$. For a word $\Gamma \in X_n^{*}$ we shall denote by $|\Gamma|$ the length of the word $\Gamma$; for the empty string $\epsilon$ we shall set $|\epsilon| = 0$. Set $X_n^{+} := X_n^{*} \backslash \{\epsilon\}$ and for $k \in \mathbb{N}$ we shall denote by $X_n^{k}$ the subset of $X_n^{*}$ consisting of all words of length $k$.  For a word $\alpha\in X_n^*$, the set $[\alpha]=\{\alpha\chi\in \CCn\;|\;\chi\in \CCn\}$ is a called a \emph{cone} and the set of all cones forms a basis for the topology on Cantor space.  Set $H(\CCn)$ to be the group of homeomorphisms of Cantor space. For $h, g \in H(\CCn)$ we shall write $h^{g}$ for $g^{-1} h g$. Unless otherwise indicated we shall write functions to the right of their arguments.

For two words $u,v\in X_n^{*}$ such that $u$ is a prefix of $v$, we write$v-u$ for the word $\mu \in X_{n}^{*}$ such that $v = u \mu$. Given two sets $V \subset X_n^{*}$ and $W \subset X_n^{*} \sqcup X_n^{\omega}$ we shall denote by $VW$ the set $\{vw \mid v \in V, w \in W \}$. If $V$ is a singleton, $V= \{ \rho\}$ then we shall write $\rho W$ for the set $VW$.

A \emph{transducer} $T=\langle X_n, Q, \pi, \lambda\rangle$ is a tuple consisting of an alphabet $X_n$, a finite set of states $Q$, and two functions $\pi:X_n\times Q\to Q$ and $\lambda:X_n \times Q \to X_n^*$.  Let $q_0\in Q$. An \emph{initial transducer} $T_{q_0}$ is a transducer with active state $q_0$.  Let $x_1x_2x_3\ldots$ be a finite or infinite word over $X_n$. Initial transducers are recursively defined to operate on words (finite and infinite) according to the following rule:
\[(x_1x_2x_3\ldots)T_{q_0}=\lambda(x_1,q_0)(x_2x_3\ldots)T_{\pi(x_1,q_0)}\]
where, after rewriting $x_1$ by $\lambda(x_1,q_0)$, $T$ transitions to state $\pi(x_1,q_0)$ and continues processing letters.  Often, we extend $\lambda$ and $\pi$ to accept finite words, rather than just single letters, in the natural way. Note that all transducers considered in this paper are finite. 

For a state $q$ of $Q$, we call the set $\{ (\alpha)T_{q} \mid \alpha \in \CCn \} \subseteq \CCn$ the \emph{image of $q$} and we denote it $\im(q)$. If $T_{q}$ is injective, then we say that \emph{$q$ is an injective state}; if $T_{q}$ is a homeomorphism, then we say that \emph{$q$ is a homeomorphism state}, otherwise we say that \emph{$q$ is a non-homeomorphism state}. 

For two states $q_1$ and $q_2$ of a transducer $T = \gen{X_n, Q, \pi, \lambda}$ we say that \emph{$q_2$ is accessible from $q_1$} if there is a word $w \in X_n^{*}$ such that $\pi(w, q_1) = q_2$. If every state of $T$ is accessible from every other then $T$ is called \emph{accessible}.

A transducer $T$ is \emph{synchronous} if for all states $q$ of $T$ the map $\lambda(\centerdot, q)$ is a map from $X_n$ to itself i.e. $T$ preserves the length of words, and is \emph{asynchronous} otherwise. A state $q\in Q$ of an asynchronous transducer is called a state of \emph{incomplete response} if, for a certain $i\in X_n$, the longest common prefix of the words $\lambda(i\chi,q)$ for all $\chi\in \CCn$ is different than $\lambda(i,q)$. We say that two initial transducers $T_t$ and $R_r$ are \emph{$\omega-$equivalent} if they define the same function on $\CCn$. For an initial transducer $T_{t_0}$ and states $t_1$ and $t_2$ of $T_{t_0}$ we say that the states $t_1$ and $t_2$ are $\omega$-equivalent if  the initial transducers $T_{t_1}$ and $T_{t_2}$ are $\omega$-equivalent. An initial transducer $T_{t_0}$ with no states of incomplete response and no pair of $\omega$-equivalent states is called a \emph{minimal transducer}. For each initial transducer  $T_{t}$, there is a unique minimal transducer $\omega-$equivalent to $T_{t}$ (see \cite{GriNekSus}). 

For a non-initial transducer $T$, we say $T$ is \emph{ $\omega$-minimal} if it has no pair of $\omega$-equivalent states. For  a state $t$ of $T$ we shall let $\{t\}_{\omega}$ denote the $\omega$-equivalence class of $t$ in the transducer $T$. For a given non-initial transducer $T$ there is an $\omega$-minimal transducer $R$ such that for a state $t$ of $T$ and any state $t'$ in $\{t\}_{\omega}$ there is a unique state $r$ of $R$ such that the initial transducers $T_{t'}$ and $R_{r}$ are equivalent. We call $R$ an \emph{$\omega$-minimal transducer representing $T$}. Two $\omega$-minimal transducers $T = \gen{X_n, Q_T, \pi_T, \lambda_T}$ and $R = \gen{X_n, Q_R, \pi_R, \lambda_R}$  are called \emph{isomorphic} if  there is a bijection $g: Q_T \to Q_R$ such that for a state $t \in Q_T$ the initial transducers $T_{t}$ and $R_{(t)g}$ are $\omega$-equivalent. Two non-initial transducers $T$ and $R$ are called \emph{equivalent} if the $\omega$-minimal transducer representing $T$ is isomorphic to the $\omega$-minimal transducer representing $R$. We denote by $\{T\}_{\omega}$ the equivalence class of the non-initial transducer $T$. For a non-initial transducer $T$, there is a unique $\omega$-minimal transducer up-to isomorphism representing $T$.

If an initial transducer $T_{q_0}$ defines an invertible function from $\CCn$ to itself, the function is in fact a homeomorphism and we will often identify the initial transducer with the homeomorphism it produces.  Homeomorphisms induced by transducers are called \emph{rational}.  The inverse of the homeomorphism $T_{q_0}$ is also rational and an initial transducer that produces the inverse is given by the following algorithm. 

We first define the function $\T{L}_q:X_n^*\to X_n^*$ for $q\in Q$, where $\T{L}_q(\alpha)$ is the longest common prefix of all words in the preimage of $[\alpha]$ under $T_q$.  Note that if $T_q$ is a homeomorphism, $\T{L}_q(\epsilon)=\epsilon$. 

\begin{proposition}[Inversion Algorithm \cite{GriNekSus}]\label{Proposition:inversionalgorithm}
Let $T=\langle X_n, Q, \pi, \lambda\rangle$ be a transducer such that $T_{q_0}$ is a homeomorphism. Also let, for each state $q\in Q$, $\{\alpha_1,\alpha_2,\ldots,\alpha_{m_q}\}\subseteq X_n^*$ be a set of finite words such that $\T{L}_q(\alpha_i)=\epsilon$.  Let \[Q'=\cup_q\{(\alpha_1,q),(\alpha_2,q)\ldots,(\alpha_{m_q},q)\}.\]For arbitrary $(\alpha_i,q)\in Q'$ and $\beta\in X_n^*$, we set \[\lambda'(\beta,(\alpha_i,q))=\T{L}_q(\alpha_i\beta),\]\[\pi'(\beta,(\alpha_i,q))=\big(\alpha_i\beta-\lambda(\T{L}_q(\alpha_i\beta),q),\pi(\T{L}_q(\alpha_i\beta),q)\big).\] Then, the transducer $S=\langle X_n, Q', \pi', \lambda'\rangle$ is a transducer such that $S_{(\epsilon,q_0)}$ is the inverse homeomorphism of $T_{q_0}$.
\end{proposition}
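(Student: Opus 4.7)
The plan is to prove the stronger identity
\begin{equation}\label{eq:invalg}
(\gamma) S_{(\alpha_i, q)} = (\alpha_i \gamma) T_q^{-1}
\end{equation}
for every state $(\alpha_i, q) \in Q'$ (in particular, every one reachable from $(\epsilon, q_0)$) and every $\gamma \in \CCn$. Specialising to $(\alpha_i, q) = (\epsilon, q_0)$ then gives the proposition.

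Before turning to \eqref{eq:invalg}, I would record two elementary properties of the longest-common-prefix function $\T{L}_q$. The first is the immediate containment $(T_q)^{-1}([\alpha]) \subseteq [\T{L}_q(\alpha)]$. The second, needed so that the difference $\alpha_i \beta - \lambda(\T{L}_q(\alpha_i \beta), q)$ used in the algorithm is well defined, is that whenever $(T_q)^{-1}([\alpha])$ is non-empty the word $\lambda(\T{L}_q(\alpha), q)$ is a prefix of $\alpha$. For this, pick any $\delta \in (T_q)^{-1}([\alpha])$; both $\alpha$ and $\lambda(\T{L}_q(\alpha), q)$ are prefixes of $(\delta) T_q$, so they are comparable, and the case in which $\alpha$ is a proper prefix of $\lambda(\T{L}_q(\alpha), q)$ is ruled out because it would force $[\T{L}_q(\alpha)] \subseteq (T_q)^{-1}([\alpha])$ and then, using that $T_{q_0}$ is a homeomorphism together with accessibility of $q$ from $q_0$, yield the contradiction $[\alpha] \subseteq [\lambda(\T{L}_q(\alpha), q)]$.

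Given these preliminaries, I would verify \eqref{eq:invalg} by applying $T_q$ to both sides and invoking injectivity of $T_q$ (inherited from $T_{q_0}$ being a homeomorphism via accessibility). Write $\gamma = x \chi$ with $x \in X_n$ and set $L := \T{L}_q(\alpha_i x)$. Unfolding one step of $S$ and one step of $T$ yields
\[\bigl((\gamma) S_{(\alpha_i, q)}\bigr) T_q = \lambda(L, q) \cdot \bigl((\chi) S_{(\alpha_i x - \lambda(L, q),\, \pi(L, q))}\bigr) T_{\pi(L, q)}.\]
Applying \eqref{eq:invalg} at the successor state replaces the right-hand factor by $(\alpha_i x - \lambda(L, q)) \chi$, and the preliminary lemma gives $\lambda(L, q) \cdot (\alpha_i x - \lambda(L, q)) = \alpha_i x$, so the right-hand side collapses to $\alpha_i x \chi = \alpha_i \gamma$. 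Injectivity of $T_q$ then yields \eqref{eq:invalg}. The appeal to \eqref{eq:invalg} one step deeper is not truly circular: successive unfoldings force agreement of $(\gamma) S_{(\alpha_i, q)}$ and $(\alpha_i \gamma) T_q^{-1}$ on longer and longer prefixes, so the identity follows in the limit.

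The main obstacle I anticipate is closure: the state set $Q'$ must be closed under $\pi'$, and the hypothesis $\T{L}_q(\alpha_i) = \epsilon$ must persist along the recursion. Concretely, if $\T{L}_q(\alpha) = \epsilon$ and $L := \T{L}_q(\alpha x)$, one needs $\T{L}_{\pi(L, q)}(\alpha x - \lambda(L, q)) = \epsilon$ as well. Informally, $L$ has already absorbed every common prefix of $(T_q)^{-1}([\alpha x])$, so no residual forced prefix remains at the next state; rigorously this follows from the maximality of $L$ as a common prefix and a routine unpacking of the definitions of $\pi$ and $\T{L}$.
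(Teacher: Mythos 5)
The paper does not prove this proposition; it cites it from \cite{GriNekSus}, so there is no in-house argument to compare yours against. Evaluated on its own terms, your plan --- prove the stronger pointwise identity \eqref{eq:invalg} at each state that can occur, unfold one transition, and finish by injectivity of $T_q$ --- is the right shape, but the way you dissolve the apparent circularity is where the real work lies and you have not supplied it. After $n$ unfoldings you certify agreement of $(\gamma)S_{(\alpha_i,q)}T_q$ and $\alpha_i\gamma$ only on the prefix $\lambda(L_1\cdots L_n, q)$, where $L_1\cdots L_n = \T{L}_q(\alpha_i x_1\cdots x_n)$. Saying ``the identity follows in the limit'' silently assumes this output length tends to infinity. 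That is exactly the content one must prove: one needs that $T_q$ (for $q$ reachable from $q_0$) is injective with clopen image, hence a homeomorphism onto its image by compactness, so the nested cylinders $[\alpha_i x_1\cdots x_n]\cap\im(T_q)$ pull back to nested clopen sets shrinking to a single point, which forces $|\T{L}_q(\alpha_i x_1\cdots x_n)|\to\infty$. Without that step, the argument genuinely is circular: nothing prevents the accumulated output from stalling at a fixed finite word.

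Two subsidiary problems. You assert \eqref{eq:invalg} for every $(\alpha_i,q)\in Q'$, but $(\alpha_i\gamma)T_q^{-1}$ need not exist unless $\alpha_i\gamma\in\im(T_q)$; as the paper itself notes immediately after stating the proposition, the guarantee $[\alpha_i]\subseteq\im(T_q)$ is established by induction only for pairs reachable from $(\epsilon,q_0)$ (starting from $[\epsilon]=\CCn\subseteq\im(T_{q_0})$), so the induction should be restricted to those and that invariant carried along explicitly. Relatedly, your proof of the second preliminary claim (that $\lambda(\T{L}_q(\alpha),q)$ is a prefix of $\alpha$) invokes ``the contradiction $[\alpha]\subseteq[\lambda(\T{L}_q(\alpha),q)]$'' without deriving it. The derivation needs precisely the same invariant $[\alpha]\subseteq\im(T_q)$: with $(T_q)^{-1}([\alpha])=[\T{L}_q(\alpha)]$ and $T_q$ injective you get $[\alpha]\cap\im(T_q)\subseteq[\lambda(\T{L}_q(\alpha),q)]$, and only with $[\alpha]\subseteq\im(T_q)$ does this become $[\alpha]\subseteq[\lambda(\T{L}_q(\alpha),q)]$, which together with the assumed reverse inclusion forces equality and hence the contradiction with ``proper'' prefix. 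As written the lemma is stated for arbitrary $\alpha$ with $(T_q)^{-1}([\alpha])\ne\emptyset$, which is more than the argument supports.
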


We will often let $T$ be an invertible transducer and $S$ be its inverse. By this, we mean that $T$ has a state $q_0$ such that $T_{q_0}$ is a homeomorphism and every state of $T$ is accessible from $q_0$ and $S$ has a state $(\epsilon, q_0)$ such that $S_{(\epsilon,q_0)}$ produces the inverse homeomorphism of $T_{q_0}$ and every state of $S$ is accessible from $(\epsilon, q_0)$. We observe that every pair $(w,q)$ which is accessible from $(\epsilon, q_0)$ in addition to having $\T{L}_{q}(w) = \epsilon$, satisfy $[w] \subset  \im(q)$. This follows by an easy induction argument, observing that $\CCn=[\epsilon] \subset \im(q_0)$. This means that the states of $S_{(\epsilon, q_0)}$ are, in some cases, a proper subset of $Q'$ as defined in Proposition~\ref{Proposition:inversionalgorithm}. In order to avoid having cluttered and confusing notion, throughout the paper we will generally use $S$ as the inverse transducer for $T$, so that $T_{q_0}^{-1}$ can represent the inverse of the homeomorphism associated to $T_{q_0}$.

If $T$ is an invertible synchronous transducer, the inverse of $T$ is much simpler to compute. The inverse of $T$ is the transducer $S = \gen{X_n,Q^{-1}, \pi', \lambda'}$, where ${}^{-1}:Q\to Q^{-1}$ is a bijection and
\begin{IEEEeqnarray*}{rCl}
\pi(\alpha,q) &=& p  \qquad \iff \qquad  \pi'(\beta,q^{-1}) = p^{-1} \nonumber \\
\lambda(\alpha,q) &=& \beta \qquad \iff \qquad \lambda'(\beta,q^{-1}) = \alpha
\end{IEEEeqnarray*} 
for appropriate $\alpha,\beta\in \CCn$. In particular the map $\lambda'(\centerdot,q^{-1}): X_n \to X_n$ is the inverse of $\lambda(\centerdot, q)$. For a state $q_0$ of $T$ we shall write $S_{q_0^{-1}}$ for the transducer representing the homeomorphism $T_{q_0}^{-1}$.

The product of transducers $T=\langle X_n, Q_T, \pi_T, \lambda_T\rangle$ and $R=\langle X_n, Q_R, \pi_R, \lambda_R\rangle$ is the transducer $TR=\langle X_n, Q_T\times Q_R, \pi_{TR}, \lambda_{TR}\rangle$, where $\pi_{TR}(i,(t,r))=\pi_R(\lambda_T(i,t),\pi_T(i,t))$ and $\lambda_{TR}(i,(t,r))=\lambda_R(\lambda_T(i,t),\pi_T(i,t))$ when $i\in X_n$, $t\in Q_T$, and $r\in Q_R$. 

We have a few observations:
\begin{remark}\label{Remark: some comments about product}\leavevmode
\begin{enumerate}[label = (\arabic*)]
\item Observe that for $\delta \in \CCn$ and a state $(t,r)$ of the product $TR$ we have that $\lambda_{TR}(\delta,(t,r) ) = ((\delta)T_t)T_{r}$. \label{Remark: comments about product point 1}
\item If $T_1, T_2, R_1, R_2$ are transducers such that $T_1 \in \{T_2\}_{\omega}$ and $R_1 \in \{R_2\}_{\omega}$. Then  $T_1 R_1$ is equivalent to $T_2 R_2$. This is essentially a consequence of point \ref{Remark: comments about product point 1}. Moreover for three non-initial transducers: $T_1$, $T_2$ and $T_3$  we have $(T_1 T_2)T_3$ is equivalent to $T_1(T_2T_3)$. This is because for three states $t_1 \in Q_{t_1}$, $t_2 \in Q_{t_2}$ and $t_3 \in Q_{t_3}$ the states $((t_1, t_2), t_3)$ and $(t_1, (t_2, t_3))$ represent the composition of the functions $(T_1)_{t_1}$, $(T_2)_{t_2}$ and $(T_3)_{t_3}$. \label{Remark: transducer product is associative on equivalent transducers and respects equivalence classes}
\item Note that if $S_{(\epsilon,q_0)}$ is the inverse of $T_{q_0}$, then $ST$ and $TS$ are not necessarily trivial transducers, but the states $((\epsilon,q_0),q_0)$ and $(q_0,(\epsilon,q_0))$ do correspond to the identity homeomorphism.
 
\end{enumerate}
 
\end{remark}

A transducer $T=\langle X_n, Q_T, \pi_T, \lambda_T\rangle$ is \emph{synchronizing at level $k$} if there is a function $f:X_n^k\to Q$ such that $\pi(\alpha,q)=f(\alpha)$ for each $\alpha\in X_n^k$ and $q\in Q$.  This means that after reading long enough words, the active state of the transducer does not depend on the initial state. This is stronger notion of synchronization than occurs elsewhere in the literature as in \cite{volkov2008synchronizing} for instance. A transducer $T$ is \emph{synchronizing} if such a $k$ exists and is \emph{bi-synchronizing} if both $T$ and its inverse are \emph{synchronizing}.  In this paper, we place particular importance on transducers that are synchronizing but whose inverses are not.  We call this \emph{one-way synchronizing}.  The \emph{core} of a transducer that is synchronizing at level $k$ is the set of states $C\subseteq Q$ one can reach after reading words of length $k$, i.e. $C=\{\pi(\alpha,q)|\alpha\in X_n^k, q\in Q\}$.  The core is necessarily a strongly connected component of $T$ (viewing $T$ as a graph with states $Q$ and edges corresponding to $\pi$), and we denote it by $\core{T}$.

The generalised Thompson groups $V_n$ can be described in several ways.  See \cite{GHigman} for more details.  In this paper, we use a characterisation of $V_n$ in terms of synchronizing transducers.  We define $V_n$ to be the group of rational homeomorphisms of $\CCn$ realised by bi-synchronizing transducers with trivial core, meaning that the core consists of one state, $id$ for which $\lambda(i,id)=i$ for each $i\in X_n$. This implies that after perhaps changing finite prefixes, every element of $V_n$ eventually acts trivially on infinite words. 

A \emph{small swap} is an element $(\alpha, \beta)\in V_n$, where $\alpha,\beta\in X_n^+$, that maps the finite prefix $\alpha$ to $\beta$ and vice versa.  Specifically, $(\alpha\chi)(\alpha, \beta)= \beta\chi$ and $(\beta\chi)(\alpha, \beta)= \alpha\chi$ for all $\chi\in X_n^\omega$ and fixes all other infinite words. Note that $\alpha$ must be incomparable to $\beta$ a small swap to be well defined. The set of all small swaps generates $V_n$.  See \cite{CBleakMQuick} for more details. 

The following proposition and corollary give the framework for our results, describing the result of conjugating $V_n$ by a one-way synchronizing transducer and its inverse repsectively. The proofs of Proposition \ref{pro:conjproducessubgroups} and Corollary \ref{conjproducesovergroups} are direct results from \cite{BlkYMaisANav}.  

\begin{proposition} \label{pro:conjproducessubgroups}
Let $T_{q_0} = \gen{X_n, Q, \pi, \lambda}$ be an initial synchronizing, transducer representing a homeomorphism of $\C{C}{n}$, then $T_{q_0}^{-1}V_nT_{q_0}$ is a subset of $V_n$, and so a subgroup.
\end{proposition}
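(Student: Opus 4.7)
The plan is to show that for each $v \in V_n$ the conjugate $h := T_{q_0}^{-1} v T_{q_0}$ acts as a prefix rewrite between two complete finite prefix codes of $\CCn$, which is equivalent to $h$ being realised by a bi-synchronizing transducer with trivial core (the standard encoding of a tree-pair diagram as a transducer), so that $h \in V_n$ in the sense of the paper.

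Write $v$ as a bijective rewrite $\sigma \colon A \to B$ between complete finite antichains $A, B \subseteq X_n^+$. Let $k$ be the synchronizing level of $T$ and expand $A$ to $A' := A \cdot X_n^k$, with associated rewrite $\alpha w \mapsto \sigma(\alpha) w$ for $\alpha \in A$ and $w \in X_n^k$. Both $\alpha w$ and its $v$-image $\sigma(\alpha) w$ terminate in the same word $w$ of length $k$, so by the synchronizing property $\pi_T(\alpha w, q_0) = \pi_T(\sigma(\alpha) w, q_0) = f(w) =: q$, where $f$ is the synchronizing function of $T$ (so $q \in \core{T}$). Setting $\delta := \lambda_T(\alpha w, q_0)$ and $\delta' := \lambda_T(\sigma(\alpha) w, q_0)$, for any $\chi \in T_{q_0}[\alpha w]$ one has $(\chi) T_{q_0}^{-1} = \alpha w \mu$ for some $\mu \in \CCn$, and
\[ \chi = \delta \cdot (\mu) T_q, \qquad (\chi) h = (\sigma(\alpha) w \mu) T_{q_0} = \delta' \cdot (\mu) T_q, \]
so that on the clopen set $T_{q_0}[\alpha w] = \delta \cdot \im(T_q) \subseteq [\delta]$ the map $h$ is the prefix rewrite $\delta \xi \mapsto \delta' \xi$. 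As $\alpha w$ ranges over the complete prefix code $A'$, the sets $T_{q_0}[\alpha w]$ partition $\CCn$ into finitely many clopen pieces; each such piece further decomposes into finitely many basic cones on which $h$ continues to act as a prefix rewrite (a cone $[\delta \gamma_i] \subseteq [\delta]$ is mapped by prefix rewrite to $[\delta' \gamma_i]$). This realises $h$ as an element of $V_n$, and since $v \in V_n$ was arbitrary, $T_{q_0}^{-1} V_n T_{q_0}$ is contained in $V_n$ and, being closed under products and inverses, is a subgroup.

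The crux of the argument is the expansion $A \mapsto A \cdot X_n^k$: padding each rewrite of $v$ on the right by a common word of length $k$ forces both sides of the rewrite to arrive at the same core state $q$ of $T$, so that the $T_{q_0}^{-1}$ and $T_{q_0}$ factors on either side of $v$ compose through $q$ and cancel, leaving a clean prefix rewrite. The remaining step of refining a finite clopen partition of Cantor space into basic cones is a routine compactness argument.
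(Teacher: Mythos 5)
Correct. Your proof uses exactly the same key idea as the paper's: after padding so that both sides of a rewrite pair for $v$ end in a common word of length at least $k$ (the synchronizing level), the states of $T$ reached on either side coincide, so the $T_{q_0}^{-1}$ and $T_{q_0}$ factors cancel and the conjugate is a piecewise prefix rewrite. The paper phrases this at the level of the transducer product $T^{-1}_{q_0'} B_f T_{q_0}$, arguing that the active state after reading a sufficiently long prefix is $\omega$-equivalent to the identity, whereas you package the same cancellation directly in terms of expanding the domain antichain of $v$ by $X_n^k$ and tracking the resulting cone partition; the content is the same, though your exposition is somewhat more elementary and sidesteps the transducer-product bookkeeping.
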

\begin{proof}
Let $0 < k \in \mathbb{N}$ be a number such that $T_{q_0}$ is synchronizing at level $k$. Let $f \in  V_n$, we shall represent $f$ by a bi-synchronizing, initial transducer $B_f$ with trivial core. Let $l$ be the minimum synchronizing level of $B_f$. We may assume that $k >l$ (since if a transducer is synchronizing at some level, then it is synchronizing at every level beyond that).

Now since $T_{q_0}$ is finite and induces a self-homeomorphism of $\C{C}{n}$, there is a natural number $M$ such that for all words $\alpha \in X_n^M$,  $(\alpha)T_{q_0}$ has size at least $2k$. We may assume that $M > k$.

Let $T^{-1}_{q'_0} = \gen{X_n, Q', \pi',\lambda'}$ be the minimal transducer representing $T^{-1}_{q_0}$ and let $\alpha \in X_n^{M}$. Let $q' \in Q'$ be  such that $q' = \pi'(\alpha, q'_0 )$. Consider $\beta \gamma = (\alpha)T^{-1}_{q'_0}$, a word of length greater than or equal to $2k$, and assume that $\beta$ is the length $k$ prefix,so that $|\gamma| >k$.

Since $B_f$ is synchronizing at level $k$, and has trivial core, then it must be the case that $(\beta\gamma)B_f = (\beta) B_f \gamma$.  Let $p$ be the state of $T_{q_0}$ such that $\pi(\Lambda, s) = p$ for all states $s \in Q$. Hence after reading $\alpha$ through the product $T^{-1}_{q'_0}B_fT_{q_0}$, the active state is the triple $(q', \id, p)$. 

On the other hand after reading $\alpha$ through the product $T^{-1}_{q'_0}T_{q_0}$ the active state is $(q',p)$. However since $T^{-1}_{q'_0}T_{q_0}$ is just the identity map,  then $(q',p)$ is $\omega$-equivalent to the identity map, after removing states of incomplete response. 
\end{proof}

As mentioned above, an immediate consequence of the above proposition is:

\begin{corollary}\label{conjproducesovergroups}
Let $T_{q_0} = \gen{X_n, Q, \pi, \lambda}$ be an initial one-way synchronizing, transducer representing a homeomorphism of $\C{C}{n}$, then $T_{q_0}V_nT_{q_0}^{-1}$ contains $V_n$, and so is an overgroup of $V_n$.
\end{corollary}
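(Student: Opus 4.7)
The plan is to deduce this corollary directly from Proposition~\ref{pro:conjproducessubgroups}, since a one-way synchronizing transducer is in particular synchronizing. Applying that proposition to $T_{q_0}$ gives the inclusion $T_{q_0}^{-1} V_n T_{q_0} \subseteq V_n$, and the target statement is essentially the ``opposite'' inclusion obtained by conjugation.

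The key step is a purely group-theoretic rearrangement inside the ambient group $H(\CCn)$: multiplying both sides of $T_{q_0}^{-1} V_n T_{q_0} \subseteq V_n$ on the left by $T_{q_0}$ and on the right by $T_{q_0}^{-1}$ converts it into $V_n \subseteq T_{q_0} V_n T_{q_0}^{-1}$. Equivalently, for any $v \in V_n$, Proposition~\ref{pro:conjproducessubgroups} produces $v' \in V_n$ with $v' = T_{q_0}^{-1} v T_{q_0}$, and rearranging yields $v = T_{q_0} v' T_{q_0}^{-1} \in T_{q_0} V_n T_{q_0}^{-1}$, so every element of $V_n$ lies in $T_{q_0} V_n T_{q_0}^{-1}$.

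To finish, I would observe that $T_{q_0} V_n T_{q_0}^{-1}$ is the image of the subgroup $V_n \leq H(\CCn)$ under the inner automorphism of $H(\CCn)$ induced by $T_{q_0}$, so it is itself a subgroup of $H(\CCn)$; combined with the inclusion above, this makes it an overgroup of $V_n$, as claimed. I do not anticipate any real obstacle: the argument never invokes the ``one-way'' strengthening of the hypothesis, and that strengthening is presumably included only so that the downstream results (such as the theorem identifying $V_n^{T_{q_0}^{-1}}$ with $\T{H}(TS)$) can exploit it to give a finer description of $T_{q_0} V_n T_{q_0}^{-1}$.
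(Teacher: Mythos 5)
Your proposal matches the paper's intent exactly: the corollary is stated there as ``an immediate consequence'' of Proposition~\ref{pro:conjproducessubgroups} with no further argument, and your derivation — apply the proposition to get $T_{q_0}^{-1}V_nT_{q_0}\subseteq V_n$, then conjugate by $T_{q_0}$ to obtain $V_n\subseteq T_{q_0}V_nT_{q_0}^{-1}$ — is precisely the intended ``immediate consequence.'' Your side remark that only ``synchronizing'' (not ``one-way synchronizing'') is used is also correct; the one-way hypothesis matters later, e.g.~for the chain of proper inclusions displayed after the corollary and for the finer identification of the overgroup in Section~\ref{Section: overgroups}.
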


Note that each of the subgroups and overgroups created as in Proposition \ref{pro:conjproducessubgroups} and Corollary \ref{conjproducesovergroups} are isomorphic to $V_n$. An easy inductive argument shows that the following  chain of inclusions are valid for a one-way synchronizing transducer $T_{q_0}$ as in the statement of the proposition: 
\[ \ldots  V^{T_{q_0}^{3}} \subsetneq V^{T_{q_0}^{2}} \subsetneq V^{T_{q_0}} \subsetneq V \subsetneq V^{T_{q_0}^{-1}} \subsetneq V^{T_{q_0}^{-2}} \subsetneq V^{T_{q_0}^{-3}} \ldots \]

It is also interesting to note these concepts also apply in the setting of the generalised Thompson groups $F_n$ and $T_n$, under additional assumptions.  For example, $F_n^{T_{q_0}}$ is a subgroup of $F_n$ when $T_{q_0}$ is synchronizing and preserves the lexicographical order on $\CCn$.

\section{Conditions for bi-synchronicity and the automaton group generated by \texorpdfstring{$TS$}{Lg}} \label{Section: Synch conditions}
In this section, we present some useful characterisations of bi-synchronicity and the automaton group generated by the product of a transducer and its inverse.  Our results focus on synchronous transducers, however we mention connections to asynchronous transducers. 

The following proposition is a characterisation of when a finite, synchronous transducer is one way synchronizing. First we observe that if  $T = \gen{X_n, Q, \pi, \lambda}$ is an invertible synchronous transducer, then the function $\lambda(\centerdot, q): X_n \to X_n$ for all states $q \in Q$ defines a permutation.

\begin{proposition} \label{conditionsonsynch}
Let $T = \gen{X_n, Q, \pi, \lambda}$ be a minimal (under $\omega$-equivalence) synchronous transducer and let $S = \gen{X_n,Q^{-1}, \pi', \lambda'}$ be its inverse. Then $T$ is synchronizing with minimal synchronizing level $k \in \mathbb{N}$ if and only if for any  fixed $p^{-1} \in Q^{-1}$ the product transducer $S_{p^{-1}}T_{q}$ is an element of $V_n$ for any  $q \in Q$ and $k$ is minimal such that for any word $\alpha \in X_n^k$, for any $\chi \in \C{C}{n}{}$, and for any $q \in Q$,  $(\alpha \chi)S_{p^{-1}}T_{q} = (\alpha)S_{p^{-1}}T_{q}\chi$.
\end{proposition}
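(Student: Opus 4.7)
The plan is to prove each implication by explicitly computing the active state of the product $S_{p^{-1}}T_q$ after reading an arbitrary finite word, using the synchronous inversion formulas recalled in Section~\ref{Section: preliminaries}: $\pi'(\beta,p^{-1})=r^{-1}$ if and only if $\pi(\lambda'(\beta,p^{-1}),p)=r$, together with the fact that $\lambda'(\centerdot,p^{-1}):X_n^k \to X_n^k$ is a bijection because $S_{p^{-1}}$ is a length-preserving homeomorphism. With these in hand, starting from $(p^{-1},q)$ and reading $\alpha\in X_n^*$, the product reaches the state
\[\bigl(\pi(\lambda'(\alpha,p^{-1}),p)^{-1},\ \pi(\lambda'(\alpha,p^{-1}),q)\bigr),\]
and this explicit formula will do all the real work.

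For the forward direction, I would assume $T$ is synchronizing at level $k$ with witnessing function $f:X_n^k\to Q$, fix $\alpha\in X_n^k$, and set $\beta=\lambda'(\alpha,p^{-1})$, a word of length $k$. Synchronization at level $k$ gives $\pi(\beta,p)=\pi(\beta,q)=f(\beta)$, so the state of $S_{p^{-1}}T_q$ after $\alpha$ is $(f(\beta)^{-1},f(\beta))$. By Remark~\ref{Remark: some comments about product}, the initial transducer at this state computes the composition of $T_{f(\beta)}^{-1}$ with $T_{f(\beta)}$, which is the identity homeomorphism. Hence $S_{p^{-1}}T_q$ reaches an identity state at depth $k$ on every input, lies in $V_n$, and the displayed identity holds at level $k$.

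For the converse, I would assume the displayed identity holds at the minimal level $k$ and that $S_{p^{-1}}T_q\in V_n$ for every $p,q\in Q$. The identity says that after reading $\alpha$ the initial transducer at the resulting state acts as the identity on every tail $\chi\in\CCn$. Using $\omega$-minimality of $T$ (and hence of $S$), this forces the $T$- and $S$-components of the state to be mutually inverse under the bijection $q\mapsto q^{-1}$. Combined with the state formula, this yields $\pi(\lambda'(\alpha,p^{-1}),p)=\pi(\lambda'(\alpha,p^{-1}),q)$ for every $q\in Q$ and every $\alpha\in X_n^k$. Since $\lambda'(\centerdot,p^{-1})$ is a bijection on $X_n^k$, the word $\beta:=\lambda'(\alpha,p^{-1})$ ranges over all of $X_n^k$, and varying $p$ as well gives $\pi(\beta,r_1)=\pi(\beta,r_2)$ for every $\beta\in X_n^k$ and every $r_1,r_2\in Q$, which is precisely synchronization of $T$ at level $k$, with $f(\beta):=\pi(\beta,r_1)$. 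Minimality of the synchronizing level in the two formulations transfers by a contradiction argument: a strictly smaller value witnessing one condition would, by the other direction already proved, produce a strictly smaller value witnessing the other, contradicting the assumed minimality.

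The step most in need of care will be the ``identity action forces mutually inverse components'' passage in the converse. A priori, the state reached after $\alpha$ need only act as the identity homeomorphism, so one must argue from $\omega$-minimality that the $S$- and $T$-components coincide under the synchronous inverse bijection, rather than just composing to something $\omega$-equivalent to the identity. The natural route is to note that if the state $(r^{-1},s)$ of the product acts as the identity then $T_s$ equals the homeomorphism $T_r$, and then invoke $\omega$-minimality of $T$ to collapse this equality of homeomorphisms to $s=r$. Once this point is settled, the remainder is bookkeeping about the bijection $\lambda'(\centerdot,p^{-1})$ on $X_n^k$ and a quantifier swap between $p$, $q$, and $\alpha$.
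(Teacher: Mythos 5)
Your proof is correct and follows essentially the same route as the paper's: read a length-$k$ word through the product, use the synchronous inversion formulas to identify the reached state, and invoke $\omega$-minimality of $T$ to turn ``the product state acts as the identity'' into ``the two components agree under the inverse bijection.'' Two cosmetic differences: the paper cites the forward direction as well known rather than writing it out (your explicit argument is fine and essentially what one finds in the cited reference), and the paper establishes minimality of $k$ directly by exhibiting a length-$(k-1)$ word where $\pi(\delta,\centerdot)$ takes two values, whereas your ``transfer by contradiction using the non-minimal versions of both implications'' is an equivalent and perfectly valid packaging. One tiny redundancy: in the converse, once you have $\pi(\beta,p)=\pi(\beta,q)$ for the fixed $p$ and all $q\in Q$ and all $\beta\in X_n^k$, this already makes $\pi(\beta,\centerdot)$ constant on $Q$, so ``varying $p$ as well'' is unnecessary.
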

\begin{proof}
The forward implication is a well-known result and is seen in \cite{SilvaSteinberg} for instance.

For the reverse implication, let $p^{-1} \in Q^{-1}$ be as in the statement of the proposition. Since $S_{p^{-1}}T_{q} \in V_n$ for all $q \in Q$, and $|Q| < \infty$, let $k$ be minimal so that for all $\alpha \in X_n^k$ and $\chi \in \C{C}{n}{}$, we have $(\alpha \chi)S_{p^{-1}}T_{q} = (\alpha)S_{p^{-1}}T_{q} \chi$. In other words after processing a word of length $k$ through $S_{p^{-1}}T_{q}$ for any $q \in Q$, we enter the identity state.

Fix $\alpha \in X_n^{k}$. Let $\beta$ be  the word of length $k$ such that $\lambda'(\beta, p^{-1}) = \alpha$ and suppose that  $\pi'(\beta, p^{-1}) = r^{-1}$.

Let $q \in Q$ be arbitrary. By the condition that after processing a word of length $k$ through $S_{p^{-1}}T_{q}$ for any $q \in Q$, we enter the identity state, and by minimality of $T$, we must have that $\pi(\alpha,q) = r$. Therefore $\pi(\alpha, \centerdot) : Q \to Q$ is a single valued function, and since $\alpha$ was arbitrary, this demonstrates that $T$ is synchronizing at level $k$.

In order to show that $k$ is the minimal synchronizing level of $T$ we argue that $T$ is not synchronizing at level $k-1$. This straight-forward, since by minimality of $k$, there exists a $\gamma \in X_n^{k-1}$ and a $q \in Q$ such that, if $\delta := \lambda'(\gamma,p^{-1})$, $t:= \pi(\delta,q)$ and $s^{-1}:= \pi'(\gamma,p^{-1})$, then $S_{s}T_{t}$ is not the identity state. This then implies that $\pi(\delta,p) = s$ (by definition of the inverse transducer) and $s$  is not $\omega$-equivalent to  $t$ (by minimality of $T$ and the definition of the inverse transducer). That is the function $\pi(\delta,\centerdot):Q \to Q$ takes at least two values.

\end{proof}

We remark that the above does not really make much sense in the context of asynchronous transducers since a transducer which is asynchronous must necessarily have an injective but not surjective state. (It was proved in \cite{BlkYMaisANav} that if all states of such a transducer induced self-homeomorphisms of $\C{C}{n}{}$ then the transducer is synchronous.) However, for those states of an asynchronous transducer which are homeomorphisms, the forward implication of the Proposition \ref{conditionsonsynch} holds. That is $S_{p^{-1}}T_{q}$ is an element of $V_n$, whenever $T$ is synchronizing, and $p^{-1}$ and $q$ are states of $S$ and $T$ respectively which induce self-homeomorphisms of $\C{C}{n}{}$. The proof is analogous to the proof of Proposition \ref{pro:conjproducessubgroups}.

 The next result concerns the automaton group, $\T{G}(TS)$ generated by $TS$, that is the subgroup of the group of homeomorphisms of $\C{C}{n}{}$ generated by $TS_{p}$ for all states $p$ of the transducer $TS$.  In the asynchronous case, we can take this to be the group generated by $S_{p^{-1}}T_{q}$ where $p^{-1}$ and $q$ are states of $S$ and $T$ respectively which induce homeomorphisms of $\C{C}{n}{}$. 
 
 Let $T$ be a synchronous, invertible transducer with inverse $S$. Fix a state $t$ of $T$ and consider the element \[T_{a_1}S_{b_1^{-1}}\ldots T_{a_m}S_{b_m^{-1}}\in \T{G}(TS),\] where $a_i, 1 \le i \le m$ are states of $T$ and $b_i^{-1}, 1 \le i \le m$ are states of $S$. By Proposition \ref{conditionsonsynch} we have:
 \[
   S_{t^{-1}}(T_{a_1}S_{b_1^{-1}}T_{a_2}T^{-1}_{b_2^{-1}}\ldots T_{a_m}S_{b_m^{-1}})T_{t} \in \T{G}(ST) \le V_n
 \]
Therefore conjugation by  $T_{t}$ induces an isomorphism from $\T{G}(TS)$ to a subgroup of $\T{G}(ST)$.  Analogously conjugation by $S_{t^{-1}}$ induces an isomorphism from $\T{G}(ST)$ to a subgroup of $\T{G}(TS)$. Hence the cardinality of $\T{G}(TS)$ is equal to the cardinality of $\T{G}(ST)$. If both are have finite cardinality, then it follows that $\T{G}(ST) \cong \T{G}(TS)$. Notice that, essentially as a corollary of Proposition \ref{conditionsonsynch}, the group $\T{G}(ST)$ is a subgroup of $V_n$.

In the case where $T$ is a synchronizing transducer, we are able to say a lot more about what $\T{G}(TS)$ looks like. This forms the content of the proposition below. 

\begin{proposition}
Let $T = \gen{X_n, Q, \pi, \lambda}$ be a minimal, synchronizing, synchronous transducer and let $S = \gen{X_n,Q^{-1}, \pi', \lambda'}$ be its inverse. Also let $0 < k \in \mathbb{N}$ be the minimal synchronizing length of $T$. The automaton group generated by $TS$ is isomorphic the subgroup of the symmetric group on $n^k$ points generated by the permutations $\lambda_{TS}(\centerdot, p): X_n^k \to X_n^k$, where $p$ a state of $TS$ and $\lambda_{TS}$ is the transition function for $TS$. In particular, this group is finite.
\end{proposition}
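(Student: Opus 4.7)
My plan is to show that the restriction map $\Phi\colon \T{G}(TS) \to S_{n^k}$ sending $g \mapsto g|_{X_n^k}$ is an injective group homomorphism onto the subgroup generated by the permutations $\lambda_{TS}(\centerdot, p)|_{X_n^k}$. Well-definedness and the homomorphism property are immediate: since $T$ and $S$ are both synchronous, every $g \in \T{G}(TS)$ is length-preserving, and composition of length-preserving maps restricts to composition on $X_n^k$. On generators we have $\Phi(TS_p) = \lambda_{TS}(\centerdot, p)|_{X_n^k}$, so the image is exactly the subgroup of $S_{n^k}$ described in the statement.

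The substantive step is injectivity. Suppose $g = T_{a_1}S_{b_1^{-1}}\cdots T_{a_m}S_{b_m^{-1}} \in \T{G}(TS)$ satisfies $g|_{X_n^k} = \id$. Fix any state $t$ of $T$ and set $\phi(g) := S_{t^{-1}}\, g\, T_t$. As in the paragraph preceding the proposition, this conjugate regroups as
\[
\phi(g) = (S_{t^{-1}}T_{a_1})(S_{b_1^{-1}}T_{a_2}) \cdots (S_{b_{m-1}^{-1}}T_{a_m})(S_{b_m^{-1}}T_t),
\]
a product of $m+1$ factors of the form $S_{p^{-1}}T_q$. By Proposition~\ref{conditionsonsynch} each such factor lies in $V_n$, so $\phi(g) \in \T{G}(ST) \le V_n$.

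Next I would verify two properties of $\phi(g)$. First, it fixes every $\alpha \in X_n^k$: writing $\beta = (\alpha)S_{t^{-1}} \in X_n^k$, we have $(\beta)g = \beta$ by hypothesis, and $(\beta)T_t = \alpha$ because $S_{t^{-1}}T_t = \id$; hence $(\alpha)\phi(g) = \alpha$. Second, $\phi(g)$ acts trivially on suffixes past length $k$: each factor $S_{p^{-1}}T_q$ satisfies $(\alpha\chi)S_{p^{-1}}T_q = ((\alpha)S_{p^{-1}}T_q)\chi$ for $|\alpha|=k$ by Proposition~\ref{conditionsonsynch}, and an easy induction on the number of factors shows this property is preserved under composition. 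Combining these, $(\alpha\chi)\phi(g) = \alpha\chi$ for all $\alpha \in X_n^k$ and $\chi \in \CCn$, so $\phi(g) = \id$. Since $\phi$ is conjugation by a homeomorphism it is injective, so $g = \id$ in $\T{G}(TS)$.

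The main obstacle is the conjugation detour. Because $\T{G}(TS)$ is not a priori contained in $V_n$, one cannot apply Proposition~\ref{conditionsonsynch} directly to $g$; we must instead push $g$ through conjugation by $T_t$ into $\T{G}(ST) \le V_n$ and then check that the restriction hypothesis $g|_{X_n^k} = \id$ propagates across the conjugation — this is the key calculation in the previous paragraph. Once injectivity is established, the finiteness of $\T{G}(TS)$ is automatic, as its image under $\Phi$ lies in the finite group $S_{n^k}$.
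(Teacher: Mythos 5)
Your proof is correct and uses the same homomorphism as the paper — both define the map $\T{G}(TS)\to S_{n^k}$ by restricting the length-preserving action to words of length $k$ — but you establish injectivity by a genuinely different route. The paper takes $g = T_{a_1}S_{b_1^{-1}}\cdots T_{a_m}S_{b_m^{-1}}$ with trivial image, rearranges the permutation identity $\bar a_1\bar b_1^{-1}\cdots\bar a_m\bar b_m^{-1}=\id$ into $\bar b_1^{-1}\bar a_2\cdots\bar a_m = \bar a_1^{-1}\bar b_m$, uses the prefix-permutation structure of $\T{G}(ST)$ to conclude that the middle block $S_{b_1^{-1}}T_{a_2}\cdots T_{a_m}$ is $\omega$-equivalent to $S_{a_1^{-1}}T_{b_m}$, and then reassembles to get $g = T_{a_1}S_{a_1^{-1}}T_{b_m}S_{b_m^{-1}} = \id$. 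You instead conjugate the whole element $g$ by $T_t$ for a single fixed state $t$, regroup the resulting word into factors of the form $S_{p^{-1}}T_q$ to land inside $\T{G}(ST)\le V_n$, and then check directly that this conjugate fixes every length-$k$ prefix and is the identity past level $k$, forcing $\phi(g) = \id$ and hence $g=\id$. Both arguments hinge on the same structural fact (Proposition~\ref{conditionsonsynch}: elements of $\T{G}(ST)$ are length-$k$ prefix permutations followed by the identity), and both exploit the cancellation $T_pS_{p^{-1}} = S_{p^{-1}}T_p = \id$, but your conjugation-based argument is arguably more systematic and avoids the ad hoc rearrangement of the product; it also makes explicit that the kernel argument is really the statement that $\phi$ is an injection of $\T{G}(TS)$ into the subgroup of $V_n$ consisting of length-$k$ prefix exchanges. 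One small point to note: well-definedness of $\Phi$ is automatic in your formulation (restriction of a length-preserving bijection), whereas the paper's phrasing on representatives nominally requires the same observation to make the map well-defined; you handle this more cleanly.
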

\begin{proof}
Set $TS := \gen{X_n, QQ^{-1}, \pi_{TS}, \lambda_{TS}}$ and let $k$ be as in the statement of the proposition. Also let $p,q \in Q$, and word $\alpha \in X_n^{k}$. Then if $\pi'(\alpha, p^{-1}) = r^{-1}$ and $\beta = \lambda'(\alpha,p^{-1})$, we must have that, $\pi(\beta, p) = r$. Since $T$ is synchronizing, we must also have that $\pi(\beta,q) = r$. Therefore the state $(p^{-1},q)$ of $ST$ acts on $\C{C}{n}{}$, by acting as  the permutation of words of length $k$ induced by $(p^{-1},q)$ on the length $k$ prefix, and as the identity on the infinite suffix preceding this.

For a state $p$ of $T$ let $\bar{p}$ denote the permutation induced by the action of $p$ on words of length $k$.  Likewise for a state $q^{-1}$ of $S$, the permutation $\bar{q}^{-1}$ corresponds to the induced permutation on words of length $k$ by $S$. Define a map from $\T{G}(TS) \to S_{n^k}$ (the symmetric group on $n^k$ letters), by 
\[
T_{a_1}S_{b_1^{-1}}T_{a_2}S_{b_2^{-1}}\ldots T_{a_m}S_{b_m^{-1}} \mapsto \bar{a}_1\bar{b}_1^{-1}\bar{a}_2\bar{b}_{2}^{-1}\ldots\bar{a}_m\bar{b}_m^{-1}
\]
where $a_i, \ 1 \le i \le m$ are states of $T$ and $b_i^{-1}, 1 \le i \le m$ are states of $S$. 

This map is clearly a homomorphism. In order to show that it is a monomorphism, we must argue that its kernel is trivial.

Let $T_{a_1}S_{b_1^{-1}}T_{a_2}S_{b_2^{-1}}\ldots T_{a_m}S_{b_m^{-1}} \in \T{G}(TS)$ be such that $\bar{a}_1\bar{b}_1^{-1}\bar{a}_2\bar{b}_{2}^{-1}\ldots\bar{a}_m\bar{b}_m^{-1}  = id \in S_{n^k}$. Rearranging, we have $\bar{b}_1\bar{a}_2\bar{b}_{2}\ldots\bar{a}_m = \bar{a}_1^{-1}\bar{b}_m$. Now notice that since $S_{b_1^{-1}}T_{a_2}S_{b_2^{-1}}\ldots T_{a_m} \in \T{G}(ST)$  and by the observation in the first paragraph, it acts on $\alpha\chi \in \C{C}{n}{}$ by,
\[
 \alpha\chi \mapsto (\alpha)\bar{b}_1^{-1}\bar{a}_2\bar{b}_{2}^{-1}\ldots\bar{a}_m \chi = (\alpha)\bar{a}_1^{-1}\bar{b}_m\chi = (\alpha \chi) S_{a_{1}^{-1}}T_{b_{m}}
\]

Therefore $S_{b_1^{-1}}T_{a_2}S_{b_2^{-1}}\ldots T_{a_m}$ is $\omega$-equivalent to $S_{a_{1}^{-1}}T_{b_{m}}$. This means that $T_{a_1}S_{b_1^{-1}}\ldots T_{a_m}S_{b_m^{-1}}$ is $\omega$-equivalent to $id \in \mathrm{Homeo}(\C{C}{n}{})$.
\end{proof}

A consequence of the proposition above and the comments preceding it, is that $\T{G}(TS)$ is isomorphic to $\T{G}(ST)$ in the synchronous case. The question is still open if the same is true in the asynchronous case for those transducers one-way synchronizing transducers $T$ such that $\T{G}(ST)$ is infinite.

\section{Class of subgroups of \texorpdfstring{$V_n$}{Lg} containing conjugates of \texorpdfstring{$V_n$}{Lg}} \label{Sec: subgroups}
Let $\sim$ be an equivalence relation on $X_n^+$.  We call $\sim$ \emph{subgroup generating} if for all $\alpha, \beta\in X_n^+$, we have $\alpha\sim\beta$ if and only if for all $i\in X_n$, $\alpha i\sim \beta i$.  Using such relations, we produce subgroups of $V_n$ that, under some conditions, will be conjugate to $V_n$.  

Let $\sim$ be a subgroup generating relation on $X_n^*$ and let $v\in V_n$.  We say that the element $v$ \emph{preserves} $\sim$ if, in essence, $v$ maps prefixes to related prefixes. More specifically, $v$ preserves $\sim$ if whenever $(\alpha\chi)v=\beta\chi$ for some $\alpha,\beta\in X_n^+$ and all $\chi\in X_n^\omega$, we have $\alpha\sim\beta$. Note that if $v$ maps $\alpha\chi$ to $\beta\chi$ for some $\alpha,\beta\in X_n^+$ and all $\chi\in X_n^\omega$, then $v$ also maps $\alpha i\chi$ to $\beta i\chi$ $\chi\in X_n^\omega$ and $i\in X_n$.  This demonstrates the need for the forward implication in the definition of subgroup generating relations.

We now define the subgroup
\[V_{\sim}=\langle v\in V_n|\;v \mbox{ preserves }\sim\rangle.\]
Note that the identity trivially preserves $\sim$ and $V_{\sim}$ is closed under taking inverses. The reverse implication in the definition of subgroup generating guarantees $V_{\sim}$ is closed under composition.  

One method for producing subgroup generating relations is to use language accepting automata.  Let $A$ be an automaton $\langle X_n, Q, \pi, q_0\rangle$ with  transition function $\pi$, states $Q$, and start state $q_0$ such that $A$ has no inaccessible states. (This is for convenience.)  We can then define the relation $\sim_A$ as, for all $\alpha,\beta\in X_n^+$, $\alpha\sim_A\beta$ if and only if $\pi(\alpha,q_0)=\pi(\beta,q_0)$.  It is interesting to note that this implies equivalence classes under $\sim_A$ form regular languages. 

It is clear that for any automaton $A$, $\sim_A$ satisfies the forward implication for subgroup generating.  To ensure it satisfies the reverse implication, we make one additional constraint on $A$. We need that the ordered tuple $(\pi(0,q),\pi(1,q),\ldots,\pi(n-1,q))\neq(\pi(0,q'),\pi(1,q'),\ldots,\pi(n-1,q'))$ for each $q\neq q' \in Q$.  We call this \emph{Condition 1}.

Given an automaton $A = \gen{X_n, Q, \pi, q_0}$ which does not satisfies Condition 1, we can always construct one which does. We do this using the following procedure, which we call the \emph{collapsing procedure}.

\begin{enumerate}[label = (\arabic*)]
\item Define an equivalence relation on the states of $A$ by $q_1 \sim q_2$ if and only if for all $i \in X_n$ $\pi(i, q_1) = \pi(i, q_2)$. Let $[q_1], [q_2], \ldots [q_{r_1}]$ be the equivalence classes of this relation.
\item Form a new automaton $A_1= \gen{X_n, \{[q_0], [q_1], \ldots [q_{r_1}]\}, \pi_1, [q_0]}$ from $A$ as follows. The states of $A_1$ are the equivalence classes $[q_1], \ldots, [q_r]$. The transition function $\pi_1$ of $A_1$ is defined by $\pi_1(i, [q_1]) = [\pi(i, q_1)]$ for all $i \in  X_n$. Notice that the transition function is well defined since if $q \in [q_1]$ then $\pi(i, q) = \pi(i, q_1)$ for all $i \in X_n$. This represents one step of the collapsing procedure.
\item Check if $A_1$ satisfies condition 1. If yes we stop, otherwise we repeat the previous 2 steps with $A_1$ in place of $A$. 

\end{enumerate}

Since $A$ has finitely many states then there is a minimal $k \in \mathbb{N}$ such that $A_{k}$ satisfies condition 1. Therefore $\sim_{A_{k}}$ is a subgroup generating relation. Making use of $A_k$ we can now define a subgroup  generating relation $\sim_{A}$ which is equal to $\sim_{A_{k}}$. We do this as follows, for  all  $\alpha, \beta \in X_n^{+}$, $\alpha \sim_{A} \beta$ if and only if $\pi(\alpha, q_0)$ and $\pi(\beta, q_0)$ represent the same state of $A_{k}$, that is $\pi(\alpha, q_0)$ and $\pi(\beta, q_0)$ are identified at step $k$ of the collapsing procedure. Call $\sim_{A}$ the \emph{subgroup generating relation induced by $A$}. Notice that if $A_{k}$ is a single state transducer then $\sim_{A}$ has only one equivalence class, and so $V_{\sim_{A}} = V$.

The following lemma is proved in the appendix of \cite{BlkYMaisANav}.

\begin{lemma} \label{lemma:equivalent at k means transition alike on Xk}
Let $T= \gen{X_n, Q, \pi, \lambda }$ be a transducer. Let $A= \gen{X_n, Q, \pi, q_0}$ be the automaton obtained from $T$. Then $T$ is synchronizing at level $k$ if and only if after $k$ steps of the collapsing procedure the resulting transducer $A_{k}$ has one state. Moreover two states $q_1, q_2$ of $A$ are identified at step $i$ of the collapsing procedure if and only if for all $\Gamma \in X_n^{i}$ we have $\pi(\Gamma, q_1) = \pi(\Gamma, q_2)$.
\end{lemma}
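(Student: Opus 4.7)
My plan is to prove the ``Moreover'' clause first by induction on $i$, and then deduce the first assertion as a direct corollary.

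For the base case $i=1$, two states $q_1,q_2$ of $A$ are identified at step $1$ precisely when they are equivalent under the relation defined in step (1) of the collapsing procedure, which by definition says $\pi(j,q_1)=\pi(j,q_2)$ for every $j\in X_n$. This is exactly the statement that $\pi(\Gamma,q_1)=\pi(\Gamma,q_2)$ for every $\Gamma\in X_n^{1}$. For the inductive step, suppose the claim holds for $i$, and let $\pi_i$ denote the transition function of $A_i$. Two states $q_1,q_2$ of $A$ are identified at step $i+1$ if and only if their images $[q_1]_i$ and $[q_2]_i$ in $A_i$ satisfy $\pi_i(j,[q_1]_i)=\pi_i(j,[q_2]_i)$ for every $j\in X_n$; by the definition of $\pi_i$, this is the statement that $[\pi(j,q_1)]_i=[\pi(j,q_2)]_i$, i.e.\ that $\pi(j,q_1)$ and $\pi(j,q_2)$ are identified at step $i$. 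By the inductive hypothesis, this happens if and only if $\pi(\Delta,\pi(j,q_1))=\pi(\Delta,\pi(j,q_2))$ for every $\Delta\in X_n^{i}$, equivalently $\pi(j\Delta,q_1)=\pi(j\Delta,q_2)$ for every $j\Delta\in X_n^{i+1}$, completing the induction.

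For the first assertion, note that $A_k$ has a single state if and only if every pair of states $q_1,q_2\in Q$ is identified at step $k$, which by the ``Moreover'' clause is equivalent to $\pi(\Gamma,q_1)=\pi(\Gamma,q_2)$ for every $\Gamma\in X_n^{k}$ and every $q_1,q_2\in Q$. This is precisely the condition that $\pi(\Gamma,\cdot)\colon Q\to Q$ is a constant function for every $\Gamma\in X_n^{k}$, which is the definition of $T$ being synchronizing at level $k$.

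The only real care required is notational bookkeeping: one must distinguish between a state of $A$, its image in $A_i$, and the relation ``identified at step $i$'', and verify that the transition function $\pi_i$ on $A_i$ is well-defined (this is observed in step (2) of the collapsing procedure itself). I do not expect a genuine obstacle; the main content is the induction, and the base case is literally the definition.
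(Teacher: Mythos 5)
The paper does not prove this lemma itself --- it cites the appendix of \cite{BlkYMaisANav} for the proof --- so there is no internal argument to compare against. Your proof is correct and is the natural one: you establish the ``Moreover'' clause by induction on $i$ and deduce the synchronization equivalence from it. The only point worth flagging is the identity $\pi_i(j,[q]_i)=[\pi(j,q)]_i$ (where $[q]_i$ denotes the image of $q$ in $A_i$), which you use in the inductive step; it is not literally the definition of $\pi_i$ as given in the collapsing procedure (that definition expresses $\pi_i$ in terms of $\pi_{i-1}$, not $\pi$), but it follows by an easy sub-induction and is harmless to leave implicit. One also tacitly extends the collapsing procedure past the point where Condition~1 first holds, noting it then stabilizes, so that $A_k$ is defined for all $k$; this matches the lemma's usage and causes no difficulty.
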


Therefore given an initial, one-way synchronizing transducer $T_{q_0} = \gen{X_n, Q, \pi, \lambda}$ equal to its core  with inverse $S_{(\epsilon, q_0)} = \gen{X_n, Q, \pi', \lambda'}$, the subgroup generating relation induced by the automaton $A=\gen{X_n, Q, \pi',  q_0}$ obtained from $S_{(\epsilon,q_0)}$ has more than one equivalence class, in particular $A$ cannot be reduced to a single state transducer by the collapsing procedure.

We can now state the following theorem, which gives a description of the subgroup of $V_n$ formed by conjugating $V_n$ by a synchronous, one-way synchronizing transducer.

\begin{theorem}\label{thm:subgroup_synchronous_case}
Let $T_{q_0} = \gen{X_n,  Q, \pi, \lambda}$ be an initial, synchronous, one-way synchronizing transducer equal to core with inverse $S_{q_0^{-1}} = \gen{X_n, Q, \pi', \lambda'}$. Let $A= \gen{X_{n}, Q, \pi', \lambda'}$ be the automaton obtained from $S_{q_0^{-1}}$, then $V_n^{T_{q_0}}=V_{\sim_A}$. 
\end{theorem}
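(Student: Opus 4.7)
The proof establishes both inclusions $V_n^{T_{q_0}} \subseteq V_{\sim_A}$ and $V_{\sim_A} \subseteq V_n^{T_{q_0}}$. Passing to an $\omega$-minimal representative if necessary, I will assume $T$ (and hence $S$) is minimal, so that $\omega$-equivalent states coincide. For $\alpha \in X_n^+$ write $\alpha^o := \lambda'(\alpha, q_0^{-1})$ and $\alpha^s := \pi'(\alpha, q_0^{-1})$. Two facts from the synchronous invertible setup will be used repeatedly: first, $T_q \circ S_{q^{-1}} = \id$, so $T_{q_0}^{-1}([\alpha]) = [\alpha^o]$; second, the inversion identity $\pi'(\lambda(\Gamma, q), q^{-1}) = (\pi(\Gamma, q))^{-1}$ holds for every word $\Gamma$ and state $q$.

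For the forward inclusion, take $g = T_{q_0}^{-1} v T_{q_0}$ with $v \in V_n$ and suppose $(\alpha\chi) g = \beta\chi$ for every $\chi \in \CCn$. Applying $T_{q_0}$ to both sides converts this into $\alpha^o \lambda'(\chi, \alpha^s) v = \beta^o \lambda'(\chi, \beta^s)$. Writing $\chi = \tau\sigma$ with $|\tau|$ large enough that $v$ acts on the cone $[\alpha^o \lambda'(\tau, \alpha^s)]$ as a pure prefix swap, comparing suffix lengths in the synchronous setting forces $\lambda'(\sigma, \pi'(\tau, \alpha^s)) = \lambda'(\sigma, \pi'(\tau, \beta^s))$ for every $\sigma$; $\omega$-minimality then yields $\pi'(\tau, \alpha^s) = \pi'(\tau, \beta^s)$ for every $\tau$ of that length. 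Lemma~\ref{lemma:equivalent at k means transition alike on Xk} now identifies $\alpha^s$ with $\beta^s$ in the collapsed automaton, so $\alpha \sim_A \beta$.

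For the reverse inclusion, let $g \in V_{\sim_A}$ be described by a partition of $\CCn$ into cones $[\alpha_i]$ that $g$ swaps to $[\beta_i]$ by prefix replacement with $\alpha_i \sim_A \beta_i$. It suffices to show $v := T_{q_0} g T_{q_0}^{-1} \in V_n$. A direct calculation using the above two facts shows $v$ acts on $[\alpha_i^o]$ by $\alpha_i^o \eta \mapsto \beta_i^o \cdot (\eta) T_{p_i} S_{r_i^{-1}}$, where $p_i^{-1} = \alpha_i^s$ and $r_i^{-1} = \beta_i^s$. The crucial claim is that $T_{p_i} S_{r_i^{-1}} \in V_n$: Lemma~\ref{lemma:equivalent at k means transition alike on Xk} applied to $\alpha_i \sim_A \beta_i$ yields $K$ such that $\pi'(\Gamma, p_i^{-1}) = \pi'(\Gamma, r_i^{-1})$ for every $\Gamma \in X_n^K$; substituting $\Gamma = \lambda(\Gamma_0, p_i)$ for $\Gamma_0 \in X_n^{\max(K,k)}$ (with $k$ the synchronizing level of $T$) and invoking the inversion identity, one sees that the product transducer $T_{p_i} S_{r_i^{-1}}$ lands in its identity state $(\pi(\Gamma_0, p_i), \pi(\Gamma_0, p_i)^{-1})$ after reading $\Gamma_0$. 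Summing over the partition, $v$ acts as a $V_n$ element on each $[\alpha_i^o]$ and as the identity off $\bigcup_i [\alpha_i^o]$, so $v \in V_n$.

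The main obstacle is the claim $T_{p_i} S_{r_i^{-1}} \in V_n$ in the reverse direction: one must convert the purely automaton-theoretic relation $\alpha_i^s \sim \beta_i^s$ in the collapsed $A$ into the transducer-theoretic statement that the composed transducer $TS$ starting at $(p_i, r_i^{-1})$ reaches its identity state after reading words of bounded length, and this hinges on choosing test words of the shape $\lambda(\Gamma_0, p_i)$ and exploiting the inversion identity to pass from a statement about $\pi'$ to a statement about $T$-states and their $S$-duals. The forward direction additionally relies on the minor $\omega$-minimality reduction noted at the outset to promote equality of output functions to literal equality of states.
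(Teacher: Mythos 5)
Your proof is correct and follows essentially the same approach as the paper's: both directions hinge on the same inversion identity between $\pi$ and $\pi'$, the cone-by-cone computation of the conjugate, and Lemma~\ref{lemma:equivalent at k means transition alike on Xk} to translate the $\sim_A$ relation into the fact that a product $T_p S_{r^{-1}}$ reaches an identity state after reading a bounded-length word. The only noticeable differences are organizational — you argue with arbitrary elements where the paper's forward direction works with small-swap generators and invokes Proposition~\ref{pro:conjproducessubgroups}, and you make explicit the reduction to $\omega$-minimal $T$, which the paper leaves implicit.
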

\begin{proof}
We first prove that $V_n^{T_{q_0}}\subseteq V_{\sim_A}$ by showing that conjugates of small swaps are contained in $V_{\sim_A}$. 

Let $\alpha\perp\beta\in X_n^+$ so that $(\alpha, \beta)\in V_n$ is a small swap and consider the action of $(\alpha, \beta)^{T_{q_0}}$ on $(\alpha)T_{q_0}\chi$.  We see
\begin{align*}
((\alpha)T_{q_0}\chi)(\alpha, \beta)^{T_{q_0}}&=(\alpha(\chi)S_{q^{-1}})(\alpha, \beta) T_{q_0}\\
&=(\beta(\chi)S_{q^{-1}})T_{q_0}\\
&=(\beta)T_{q_0}(\chi)S_{q^{-1}}T_{p}
\end{align*}
where $q=\pi(\alpha, q_0)$ and $p=\pi(\beta,q_0)$.  Since $S_{q^{-1}}T_{p}\in V_n$ by Proposition~\ref{pro:conjproducessubgroups}, $S_{q^{-1}}T_{p}$ will replace a prefix of $\chi$ before acting as the identity.  

Let the element $S_{q^{-1}}T_{p}\in V_n$ swap the prefixes $\gamma,\delta\in X_n^+$, i.e. $(\gamma\rho)S_{q^{-1}}T_{p}=\delta\rho$ for all $\rho\in X_n^\omega$.  This implies
\begin{align*}
((\alpha)T_{q_0}\gamma\rho)(\alpha, \beta)^{T_{q_0}}&=(\beta)T_{q_0}\delta\rho
\end{align*}
for all $\rho\in X_n^*$.

We now need to show $(\alpha)T_{q_0}\gamma\sim_A(\beta)T_{q_0}\delta$.  However, we know that $(\gamma\rho)S_{q^{-1}}T_{p}=\delta\rho$ and furthermore
$(\gamma\rho)S_{q^{-1}}=(\delta\rho)T_{p}^{-1}$.  Since this holds for all $\rho\in X_n^{\omega}$, this means $\pi'(\gamma, q)=\pi'(\delta, p)$ and therefore $\pi'((\alpha)T_{q_0}\gamma, q_0)=\pi'((\beta)T_{q_0}\delta, q_0)$.

With a similar proof for infinite words of the form $(\beta)T_{q_0}\chi$, this shows that $(\alpha, \beta)^{T_{q_0}}$ preserves $\sim_A$, as it trivially holds outside of its support.

To show $V_{\sim_A}\subseteq V_n^{T_{q_0}}$, we consider an element $v\in V_{\sim_A}$ such that for $\alpha,\beta\in X_n^+$ where $\alpha\sim_A \beta$ we have $(\alpha\chi)v=(\beta)\chi$ for all words $\chi\in X_n^\omega$.  

Let $\chi\in X_n^\omega$.  We then have
\begin{align*}
((\alpha)T_{q_0}^{-1}\chi)v^{T_{q_0}^{-1}}&=(\alpha(\chi)T_{q})v S_{q_0^{-1}}\\
&=(\beta(\chi)T_{q})S_{q_0^{-1}}\\
&=(\beta)T_{q_0}^{-1}(\chi)T_{q}S_{p^{-1}}
\end{align*}
where $q=\pi((\alpha)S_{q_0^{-1}},q_0)$ and $p=\pi'(\beta, q_0)$.  However, we know $\pi((\alpha)S_{q_0^{-1}},q_0)=\pi'(\alpha,q_0)$ and since $\alpha\sim_A\beta$, $q$ and $p$ represent equal states in $A_{k}$. 
This implies that for any word $\Gamma \in X_n^{k}$ we have $\pi'(\Gamma, q) = \pi(\Gamma, p)$. Therefore if $\Gamma$ is the length $k$ prefix of $\chi$ so that $\chi = \Gamma \Xi$, $\Xi \in X_n^{\omega}$, we have $\pi(\Gamma,q) = \pi'((\Gamma)T_{q},q ) = \pi'((\Gamma)T_{q},p)$. Let $r = \pi(\Gamma,q)$.  It now follows that 
\begin{align*}
((\alpha)T_{q_0}^{-1}\chi)v^{T_{q_0}^{-1}} &= (\beta)T_{q_0}^{-1}(\Gamma \Xi)T_{q}S_{p^{-1}}\\
&=(\beta)T_{q_0}^{-1}(\Gamma)T_{q}S_{p^{-1}}(\Xi)T_{r}S_{r^{-1}} \\
&= (\beta)T_{q_0}^{-1}(\Gamma)T_{q}S_{p^{-1}}\Xi.
\end{align*}
This demonstrates $v^{T_{q_0}^{-1}}\in V_n$ and therefore $V_n^{T_{q_0}}=V_{\sim_A}$
\end{proof}

\begin{example}\label{example:synchronizing}
Let $T=\langle X_2, \{a,b\}, \pi, \lambda \rangle$ be a two state transducer where $\lambda(i,a)=i$, $\lambda(i,b)=i+1 (\mod 2)$, $\pi(0,x)=a$, and $\pi(1,x)=b$. Note that $T_a$ is (one-way) synchronizing at level 1 and therefore $V^{T_a}\cong V$.  

Let $A=\langle X_2, \pi', \{a^{-1},b^{-1}, a^{-1}\}, a\rangle$ be the two state automaton with $\pi'(0,x)=x$ and $\pi'(1,x)\neq x$.  Note that $A$ is the inverse of $T$ without outputs. The relation $\sim_A$ has two equivalence classes, one with words containing an even number of ones, and other with an odd number of ones.  By the above arguments, $V^{T_a}=V_{\sim_A}$, which is the subgroup of $V$ that preserves the parity of ones in prefixes. 
\end{example}

As a remark, note that not every automaton $A$ satisfying Condition 1 produces a subgroup $V_{\sim_A}$ that is isomorphic to $V_n$.  We provide two interesting examples.

\begin{example}
Let $B=\langle X_2, \pi, \{a,b\}, a\rangle$ be the two state automaton with $\pi(0,a)=a$
and $\pi(i,x)=b$ in all other cases.  There are two equivalence classes of $\sim_B$, one for each state. One equivalence class consists of finite strings of 0s, and the other is its complement.  This implies $V_{\sim_B}$ is the stabilizer of $\overline{0}\in X_n^\omega$ in $V_2$ and is not isomorphic to $V_2$.  This can be generalised to find stablisers of any set of points.  This is related to \cite{GSstabilizers}, in which the authors examine stabilizers of points in Thompson group $F$.
\end{example}

\begin{example}
Let $C=\langle X_2, \pi, \{a,b\}, a\rangle$ be the two state automaton with $\pi(i,a)=b$
and $\pi(i,b)=a$ for all $i\in X_n$.  One equivalence class of $\sim_C$ consists of even length words, and the other of odd length words, which implies elements in $V_{\sim_C}$ preserve the parity of prefixes.  The group $V_{\sim_C}$ is equal to an embedded copy of $V_4$ in $V_2$ acting on the four-letter alphabet $\{00,01,10,11\}$.  Again, this can be generalised to embed $V_n$ in $V_2$.
\end{example}

Analogous subgroups of the generalised Thompson groups $F_n$ and $T_n$ can also be constructed using relations.  Several examples of which appeared in \cite{BowlinBrin_Fcolors}, although they arose in a different context.

We now generalise Theorem~\ref{thm:subgroup_synchronous_case} to include asynchronous transducers.  The proof is much longer and we divide it into several cases and subcases. 

\begin{theorem}\label{thm:subgroups_asynchronous_case}
Let $T_{q_0} = \gen{X_n, Q, \pi, \lambda}$ be an initial one-way synchronizing transducer equal to core with inverse $S_{(\epsilon, q_0)} = \gen{X_n, Q', \pi', \lambda'}$. Let $A= \gen{X_{n}, Q, \pi', \lambda'}$ be the automaton obtained from $S$, then $V_n^{T_{q_0}}=V_{\sim_A}$. 
\end{theorem}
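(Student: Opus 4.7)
The strategy is to adapt the proof of Theorem~\ref{thm:subgroup_synchronous_case} to the asynchronous setting. The fundamental new complication is that states of the inverse transducer $S$ take the form $(w,q)$ with $w$ tracking pending output; as a consequence, $\im(q)$ need not equal $\CCn$, and the image of a cone under $T_{q_0}$ need not be a single cone.

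For the inclusion $V_n^{T_{q_0}}\subseteq V_{\sim_A}$ I reduce to checking that the conjugate of each small swap $(\alpha,\beta)$ preserves $\sim_A$. Writing $\alpha'=(\alpha)T_{q_0}$, $\beta'=(\beta)T_{q_0}$, $q=\pi(\alpha,q_0)$, $p=\pi(\beta,q_0)$, the same calculation as in the synchronous case shows that $(\alpha,\beta)^{T_{q_0}}$ sends $\alpha'(\rho)T_q$ to $\beta'(\rho)T_p$ on $T_{q_0}([\alpha])=\alpha'\im(q)$ and is the identity off $\alpha'\im(q)\cup\beta'\im(p)$. Using the observation from Section~\ref{Section: preliminaries} that every state $(w,q)$ of $S$ accessible from $(\epsilon,q_0)$ satisfies $[w]\subseteq\im(q)$, I take finite antichains $W_q,W_p$ giving $\im(q)=\bigsqcup_{w\in W_q}[w]$ and $\im(p)=\bigsqcup_{w'\in W_p}[w']$. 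Since $S_{(w,q)}$ computes the restriction of $T_q^{-1}$ to $[w]$, the conjugate acts on $[\alpha'w]$ as $\alpha'w\chi\mapsto\beta'\cdot(\chi)S_{(w,q)}T_p$. A further cone decomposition using $W_p$, together with the fact that the conjugate is an element of $V_n$ by Proposition~\ref{pro:conjproducessubgroups}, refines this action into a finite antichain of prefix swaps $[\alpha'w\sigma]\mapsto[\beta'w'\tau]$ characterised by the identity $((w\sigma\chi)T_q^{-1})T_p=w'\tau\chi$ for all $\chi\in\CCn$.

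The core step is then to show $\alpha'w\sigma\sim_A\beta'w'\tau$. By Lemma~\ref{lemma:equivalent at k means transition alike on Xk} this reduces to showing $\pi'(\alpha'w\sigma\Gamma,q_0)=\pi'(\beta'w'\tau\Gamma,q_0)$ for all sufficiently long $\Gamma\in X_n^*$. Each such state of $S$ is a pair $(u,r)$, with $r$ the state of $T$ reached along the $S$-output produced so far and $u$ the residual pending output. Since $T$ is synchronising at some level $k$, once $S$ has produced at least $k$ letters of output the component $r$ is determined solely by the last $k$ letters of that output; the identity $(w\sigma\chi)T_q^{-1}=(w'\tau\chi)T_p^{-1}$ forces those last $k$ letters to coincide on the two sides for long enough $\Gamma$, equating the $r$-components. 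A parallel argument, using that the synchronised $T$-state pins down how the pending output evolves, equates the $u$-components.

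For the reverse inclusion $V_{\sim_A}\subseteq V_n^{T_{q_0}}$ I mirror the second half of the proof of Theorem~\ref{thm:subgroup_synchronous_case}: given $v\in V_{\sim_A}$ acting as a prefix swap $\alpha\leftrightarrow\beta$ with $\alpha\sim_A\beta$, Lemma~\ref{lemma:equivalent at k means transition alike on Xk} yields $\pi'(\alpha\Gamma,q_0)=\pi'(\beta\Gamma,q_0)$ for all sufficiently long $\Gamma$. Translating this equality back through $T_{q_0}$ shows that $v^{T_{q_0}^{-1}}$ acts as the identity on suffixes after a finite prefix exchange, and hence lies in $V_n$. The main obstacle throughout is the bookkeeping around the pending-output components of $S$-states; in the synchronous proof these are always empty and the argument is essentially automatic, whereas here the one-way synchronising hypothesis on $T$ is precisely what is needed to force the pending outputs on the two sides to eventually agree.
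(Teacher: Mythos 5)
Your proposal is correct in outline and organises the forward inclusion quite differently from the paper, so let me compare. The paper's proof fixes a small swap $(\alpha,\beta)$, writes $u=\lambda(\alpha,q_0)$, $v=\lambda(\beta,q_0)$, and then performs a detailed case analysis on the prefix swaps actually appearing in the element $(\alpha,\beta)^{T_{q_0}}\in V_n$: Case~1 handles swaps $[u\delta_1]\leftrightarrow[v\delta_2]$ (with further subcases according to whether the $S$-preimages of $[u\delta_1]$ are all $\alpha$-prefixed or are mixed between $\alpha$ and $\beta$), while Case~2 rules out, by contradiction, swaps on a cone $[\bar u]$ with $\bar u$ a \emph{proper} prefix of $u$ or $v$. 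Your proof sidesteps this by working only with a sufficiently fine decomposition of the support $\alpha'\im(q)\sqcup\beta'\im(p)$ into cones $[\alpha'w\sigma]$ and $[\beta'w'\tau]$ (so that every preimage is pure), establishing $\alpha'w\sigma\sim_A\beta'w'\tau$ there, and then — implicitly — appealing to the forward and reverse directions of the subgroup-generating property of $\sim_A$ to handle both deeper and shallower prefix swaps of the conjugate. This uniformisation is a genuine simplification: it never needs the paper's Subcase~2 or Case~2 arguments, because the fine cones automatically avoid mixed preimages, and the bootstrap through $\sim_A$'s subgroup-generating property replaces the contradiction argument of Case~2. (You should say this bootstrap out loud; the definition of ``preserves $\sim_A$'' quantifies over \emph{all} prefix pairs, including ones strictly coarser than your fine antichain.) The one spot where your argument is stated too loosely is the equality of the pending-output ($u$-) components: the phrase ``the synchronised $T$-state pins down how the pending output evolves'' suggests the $T$-state alone determines the residual, which is false — two inputs can reach the same $T$-state with different residuals. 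What actually makes the residuals agree is the identity $(w\sigma\chi)T_q^{-1}=(w'\tau\chi)T_p^{-1}$: both residuals equal the portion of $(\zeta)T_{r_1}$ already emitted, where $\zeta$ is the common (post-$\xi$) preimage and $r_1=\pi(\xi,q)=\pi(\xi,p)$ is the synchronised state. Spelling this out would make the proof airtight; as written, the $r$-component argument is sound and the $u$-component argument is correct in intent but needs the preimage identity made explicit rather than leaning on the synchronised state alone. The reverse inclusion follows the paper essentially verbatim.
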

\begin{proof}
We first prove that $V_n^{T_{q_0}}\subseteq V_{\sim_A}$ by showing that conjugates of small swaps are contained in $V_{\sim_A}$.

Let $\alpha \perp \beta \in X_n^{+}$ so that $(\alpha, \beta) \in V_n$ is a small swap. Let $u, v \in  X_n^{\ast}$ be such that $ \lambda(\alpha, q_0) = u$ and $\lambda(\beta, q_0) = v$. Also let $r = \pi(\alpha, q_0)$ and $t = \pi(\beta, q_0)$. The proof breaks up into two cases:\\
\emph{Case 1}: $(\alpha, \beta)^{T_{q_0}}$ 
 acts as a prefix replacement on some $u\delta$ or $v \delta$ for some $\delta \in X_n^{*}$\\
\emph{Case 2}: $(\alpha, \beta)^{T_{q_0}}$ acts as a prefix replacement on a proper prefix of $u$ or $v$.

We begin with Case 1.\\
\\
\emph{Proof of Case 1.} We first consider words $\rho \in \CCn$ such that  $(\rho)S_{(\epsilon, q_0)}$ has a prefix $\alpha$ and is therefore in the support of $(\alpha, \beta)^{T_{q_0}}$.  (The case when $(\rho)S_{(\epsilon, q_0)}$ has a prefix $\beta$ is analogous.)   Notice that all elements $\rho \in \CCn$ with a prefix $\alpha$  are such that $(\rho)T_{q_0}$ is in the set $u \im(T_{r}) = \{ u \chi : \chi \in \im(T_{r})\}$.
   
Let $\delta_1$ be a prefix of a word in $\im(T_r)$ such that there is a fixed $\mu \in X_n^{+}$ where $(\alpha, \beta)^{T_{q_0}}$ exchanges the prefix $u\delta_1$ with $\mu$, i.e. for any $\chi \in \CCn$, we have $(u\delta_1 \chi)(\alpha, \beta)^{T_{q_0}} = \mu \chi$. 
 
Let $\gamma \in X_n^{+}$ be a minimal word such that $\pi(\gamma, 
r) = \pi(\gamma, t)$ and also that $\lambda(\gamma, r) = \delta_1x $ 
for some $x \in X_n^{*}$. Such a word $\gamma$ exists because $T$ 
is synchronizing and $\delta_1$ is a prefix of a word in 
$im(T_r)$.  Let $q\in Q$ be equal to $\pi(\gamma, r)$ and $\phi = 
(\varphi)T_{q}$ for some $\varphi \in \CCn$. Observe that:
\begin{IEEEeqnarray}{rCl} \label{eqn: alpha beta}
 (u\delta_1 x \phi) S_{(\epsilon, q_0)}(\alpha, \beta)T_{q_0} &=& (\alpha \gamma \varphi) (\alpha, \beta) T_{q_0} \nonumber \\
 &=& (\beta \gamma \varphi)T_{q_0} \nonumber\\
 & =& v \lambda(\gamma, t) \phi\\
 & =& \mu x\phi. \nonumber
\end{IEEEeqnarray}
 
This demonstrates that $\lambda(\gamma,t) = \delta_2 x$ for some $\delta_2 \in X_n^{\ast}$ such that $v \delta_2 = \mu$. Therefore $(u\delta_1 x \phi) T_{q_0}^{-1}(\alpha, \beta)T_{q_0} =  v \delta_2 x \phi$.  

We need to argue that the states $\pi'(u\delta_1, (\epsilon, q_0))$ and $\pi'(v\delta_2, (\epsilon, q_0))$ are in the same equivalence class of $\sim_{A}$, since $(\alpha, \beta)^{T_{q_0}}$ swaps the prefixes $u\delta_1$ and $v\delta_2$.

We consider a trivial subcase.  If there exists a $\psi\in[u\delta_1]$ such that the word $(\psi)S_{(\epsilon,q_0)}$ does not have prefix $\alpha$ nor $\beta$, then $\psi$ is not in the support of $(\alpha, \beta)^{T_{q_0}}$.  This means $(\alpha, \beta)^{T_{q_0}}$ fixes $\psi$ and therefore forces $u \delta_1 = v \delta_2$.  Henceforth, we assume all elements of $[u\delta_1]$ have images under $S_{(\epsilon,q_0)}$ with prefix $\alpha$ or $\beta$.

The split this proof into the following subcases:\\
\emph{Subcase 1}: All elements of the set $([u\delta_1])S_{(\epsilon,q_0)}$ have prefix $\alpha$ and all elements of the set $([v\delta_2])S_{(\epsilon, q_0)}$ have  prefix $\beta$.\\
\emph{Subcase 2}: Some elements of the set $([u\delta_1])S_{(\epsilon,q_0)}$ have prefix $\alpha$ and others have prefix $\beta$.\\
Note that the subcase when some elements of the set $([v\delta_2])S_{(\epsilon,q_0)}$ have prefix $\alpha$ and others have prefix $\beta$ is similar to Subcase 2.\\
\\
\emph{Proof of Subcase 1}:
Suppose all elements of the set $([u\delta_1])S_{(\epsilon,q_0)}$ have prefix $\alpha$ and all elements of the set $([v\delta_2])S_{(\epsilon, q_0)}$ have  prefix $\beta$. Let $\pi'(u \delta_1, (\epsilon, q_0)) = (\T{S}(u\delta_1),r_1)$ and let $\pi(v\delta_2, (\epsilon, q_0)) = (\T{S}(v\delta_2), t_1)$ for some suffix $\T{S}(u\delta_1)$ and $\T{S}(v\delta_2)$ of $u \delta_1$ and $v \delta_2$ respectively. Let $\alpha \gamma_1 = \lambda'(u\delta_1, (\epsilon, q_0))$ and  $\beta \gamma_2 = \lambda'(v\delta_2, (\epsilon, q_0))$ for  prefixes $\gamma_1$ and $\gamma_2$ of $\gamma$.   Notice that if $\lambda(\alpha\tilde{\gamma}, q_0) = u \delta_1 y$ for some $y \in X_n^{\ast}$ and $\tilde{\gamma}$ such that $\pi(\tilde{\gamma}, r) = \pi(\tilde{\gamma}, t)$, then by equation $\eqref{eqn: alpha beta}$ we have that $\lambda(\beta \tilde{\gamma}, q_0) = v \delta_2 y$.

Now for all states $s$ of $T_{q_0}$, $\im(s)$ is clopen since 
$T_{q_0}$ represents a homeomorphism of  $\CCn$. Let  
$\tilde{\gamma}_1$ be such that $\lambda( \tilde{\gamma}_1, r_1) 
= \T{S}(u\delta_1) y$ for some $y \in X_n^{*}$, and such that 
$\pi(\gamma_1 \tilde{\gamma}_1, r) = \pi(\gamma_1 
\tilde{\gamma}_1, t)$. This means that 
$\lambda(\gamma_1\tilde{\gamma}_1, r) =\delta_1 y$, hence by the 
last sentence of the previous paragraph we have 
$\lambda(\gamma_1\tilde{\gamma}_1, t) = \delta_2 y$. This means that $\gamma_2$ is a prefix of $\gamma_1 \tilde{\gamma}_{1}$, however we shall not need this fact. Let $t_1' = 
\pi(\gamma_1, t)$. Let $q' = \pi(\tilde{\gamma}_1, r_1) = 
\pi(\tilde{\gamma}_1, t_1')$, $\im(q')$ is a union of finitely 
many basic open sets $\T{B}_{q'} $, we shall identify an element 
$B \in \T{B}_{q'} $ with the string $b \in X_n^{*}$ such that $B 
= [b]$. Let $w \in X_n^\ast$ be such that $w$ is a prefix of an element of $\im(q')$ and $w$ is  longer than  any $B \in 
\T{B}_{q'}$.  Then observe that $\pi'(u\delta_1 y w, (\epsilon, 
q_0)) = (w - \lambda(\T{L}_{q'}(w), q'), \pi(\T{L}_{q'}(w), 
q'))$. This is because $\T{L}_{q_0}(u\delta_1 y w)$ has $\alpha 
\gamma_1 \tilde{\gamma}_1$ as a prefix since $q_0$ is injective 
and $w \xi $ can be produced from $q'$ for any $\xi \in \CCn$. 
Now since by a previous observation we have that $\lambda(\beta 
\gamma_1 \tilde{\gamma}_1, q_0) =  v \delta_2 y$ and $\pi(\beta 
\gamma_1 \tilde{\gamma}_2, q_0) = q'$, we also have that $\pi'(v 
\delta_2 y w, (\epsilon, q_0)) = (\T{L}_{q'}(w), 
\pi(w-\T{L}_{q'}(w), q'))$ by the same argument as before. 
Therefore $\pi'(yw, (\T{S}(u\delta_1), r_1)) = \pi'(yw, 
\T{S}(v\delta_2), t_1))$.

Now observe that since $\T{L}_{q_0}(u \delta_1) = \alpha \gamma_1$, 
then if $G$ is the set of words $\tilde{\gamma}$ such that 
$\pi(\gamma_1 \tilde{\gamma}, r) = \pi(\gamma_1 \tilde{\gamma}, 
t)$ and $\lambda(\tilde{\gamma}, r_1) = \T{S}(u\delta_1) y$ for 
some $y \in X_n^{\ast}$ then every element of $[\delta_1]$ has a 
prefix in the set $\{ \lambda(\tilde{\gamma}, r_1) \}$. This 
follows since $q_0$ is surjective and any word $\Delta \in \CCn$ 
such that $(\Delta)T_{q_0}$ has a prefix $u \delta_1$, has a prefix 
$\alpha \gamma_1$. Set $Q(r_1) := \{ \pi(\tilde{\gamma}, r_1) : 
\gamma \in G \}$ and let $m'$  be the maximum length of a basic 
open set in a minimal finite covering by basic open sets of 
$\im(q')$ such that $q' \in Q(r_1)$. Let $m = m' + \max \{ 
|\lambda(\tilde{\gamma}, r_1)| - |\T{S}(\delta_1)| : \gamma \in G 
\}$ observe that for any $w \in X_n^{m}$ there is a word in 
$\im(r_1)$ which has $\T{S}(u\delta_1) w$ as a prefix. Therefore by 
arguments in the previous paragraph we see that  $\pi'(w, 
(\T{S}(u\delta_1), r_1)) = \pi'(w, (\T{S}(v\delta_2), t_1))$ for any 
$w \in  X_n^{m}$. Therefore $(\T{S}(v\delta_2), t_1)$ and 
$(\T{S}(u\delta_1), r_1)$ are related under the equivalence 
relation $\sim_{A}$.\hfill $\square$\\
\\
\emph{Proof of Subcase 2}:
Now we consider the case that all elements of the sets $([u\delta_1])S_{(\epsilon,q_0)}$, $([v\delta_2])S_{(\epsilon,q_0)}$ have  prefix $\beta$ or $\alpha$ and some element of the set $([u\delta_1])S_{(\epsilon,q_0)}$ has  prefix $\beta$ or some element of the set $([v\delta_2])S_{(\epsilon,q_0)}$ has  prefix $\alpha$ . Notice that by equation $\eqref{eqn: alpha beta}$, $(\alpha, \beta)^{T_{q_0}}$ acts on $[u \delta_1]$ substituting the prefix $u \delta_1$ with  $v\delta_2$ and vice-versa (since it has order 2).

Suppose some element of $([u\delta_1])S_{(\epsilon,q_0)}$ has  prefix $\beta$ (the other case is dealt with similarly). (Notice that by equation $(\eqref{eqn: alpha beta})$ there is an element of $([u\delta_1])S_{(\epsilon,q_0)}$ with prefix $\alpha$ and an element of $([u\delta_2] )S_{(\epsilon,q_0)}$ with  prefix $\beta$.)  This means that there is some $\zeta \in X_n^*$ such that $\lambda(\beta \zeta, q_0) = v v' \delta_1 y$ and $\pi(\beta \zeta, q_0) = \pi(\alpha \zeta, q_0) = p$ for a state $p$ of $T_{q_0}$ (since $T_{q_0}$ is synchronizing) for some $y \in X_n^{\ast}$ and some $v' \in X_n^{*}$ such that $vv' = u$. Hence for $\phi := \lambda(\varphi, p)$ $\varphi \in \CCn$ we have:

\begin{IEEEeqnarray}{rCl} \label{eqn: beta alpha}
 (u\delta_1 y \phi) T_{q_0}^{-1}(\alpha, \beta)T_{q_0} &=& (\beta \zeta \varphi) (\alpha, \beta) T_{q_0} \nonumber \\
 &=& (\alpha \zeta \varphi)T_{q_0} \nonumber\\
 & =& u \lambda(\zeta, r) \phi.
\end{IEEEeqnarray}

Since $(\alpha, \beta)^{T_{q_0}}$ acts as a prefix substitution 
on $[u \delta_1]$ replacing the prefix $u \delta_1$ with  
$v\delta_2$  and since $u = v v'$ and $p$ has no states of 
incomplete response we must have that $v'$ is a (possibly empty 
prefix of) $\delta_2$ and letting $\tilde{\delta}y := 
\lambda(\zeta, r)$ then $\delta_2 = v' \tilde{\delta}$ and $v 
\delta_2 = u\tilde{\delta}$. This now means that $\T{L}_{q_0}(u 
\delta_1) = \T{P}(\alpha, \beta)$ and $\T{L}_{q_0}(u 
\tilde{\delta}) =\T{P}(\alpha, \beta)$ where $\T{P}(\alpha, 
\beta)$ is the largest common prefix of $\alpha $ and $\beta$. 
Moreover from equations $\eqref{eqn: alpha beta}$ and 
$\eqref{eqn: beta alpha}$ we see that whenever there is some 
$\zeta ' \in X_n^{*}$ such that $\pi(\alpha \zeta, q_0) = \pi(\beta \zeta, q_0)$ and $\lambda(\alpha \zeta', q_0) = u 
\delta_1 y'$ (or $\lambda(\beta \zeta', q_0) = u \delta_1 y'$) for 
some $y' \in X_n^{\ast}$, then we have that $\lambda(\beta \zeta', q_0) =  u 
\tilde{\delta} y'$ (or $\lambda(\alpha \zeta', q_0) = u 
\tilde{\delta} y'$).

Let $\pi(\T{P}(\alpha, \beta), q_0)  = R$ then observe that since 
$\T{L}_{q_0}(u \delta_1) = \T{P}(\alpha, \beta)$ and 
$\T{L}_{q_0}(u \tilde{\delta})  =\T{P}(\alpha, \beta)$ then 
$\pi'(u \tilde{\delta}, (\epsilon,q_0)) = (\T{S}(u) 
\tilde{\delta}, R)$, $\pi'(u\delta_1, (\epsilon,q_0)) = 
(\T{S}(u)\delta_1, R)$ and $\lambda'(u \tilde{\delta}, 
(\epsilon,q_0)) = \lambda'(u\delta_1, (\epsilon,q_0)) = 
\T{P}(\alpha, \beta)$. Here $\T{S}(u) := u - 
\lambda(\T{P}(\alpha, \beta), q_0)$. Let $\T{S}(\alpha)$ and 
$\T{S}(\beta)$ be such that $\T{P}(\alpha ,\beta )\T{S}(\alpha) = 
\alpha$ and $\T{P}(\alpha, \beta)\T{S}(\beta) = \beta$.

Notice that since $\T{L}_{q_0}(u \delta_1) = \T{P}(\alpha, \beta)$ 
and $\T{L}_{q_0}(u \tilde{\delta})  =\T{P}(\alpha, \beta)$  and 
since $T_{q_0}$ is a homeomorphism, then from the state $R$ of 
$T_{q_0}$ it is possible to write any element of $[\T{S}(u)\delta_1]$ and $[\T{S}(u)\tilde{\delta}]$. Moreover  all inputs into 
$R$ which write  either $[\T{S}(u)\delta_1]$ or 
$[\T{S}(u)\tilde{\delta}]$ must have prefix $\T{S}(\alpha)$ or 
$\T{S}(\beta)$ by assumption.

Let $\zeta'$ be any element of $X_n^{*}$ such that $\pi(\zeta', 
r) = \pi(\zeta', t) = p$ for some state $p$ of $T_{q_0}$, and $\lambda(\alpha \zeta', q_0) = u 
\delta_1 y'$ for some $y' \in X_n^{\ast}$, by a previous 
observation, we also have $\lambda(\beta \zeta', q_0)  = u 
\tilde{ \delta} y'$. Let $\T{B}_{p}$ be a minimal finite cover by 
basic open sets for $\im(p)$ and let $w \in X_n^{*}$ be a prefix 
an element of $\im(p)$ which is longer than a basic open set in 
this cover (note again that we identify a basic open $B$ set with 
the string $b \in X_n^{\ast}$ such that $B = [b]$ ).

Consider $\pi'(u \delta_1 y' w, (\epsilon,q_0))$. This must be equal to $(w - 
\lambda'(\T{L}_{p}(w), p), \pi(\T{L}_{p}(w), p))$. This is 
because $\T{L}_{q_0}(u \delta_1 y' w)$ has $\alpha \zeta'$ as a 
prefix and since $T_{q_0}$ is injective and $[w]$ is a subset 
of $\im(p)$. Similarly $\pi'(u \tilde{\delta} y' w, q_0) = (w - 
\lambda'(\T{L}_{p}(w), p), \pi(\T{L}_{p}(w), p))$, since 
$\T{L}_{q_0}(u\tilde{\delta} y' w)$ has $\beta \zeta'$ as a 
prefix. Therefore $\pi'(u \delta_1 y' w, q_0) = \pi'(u 
\tilde{\delta} y' w, q_0)$.

From $R$ it is possible to write any element of $[\T{S}(u)\delta_1 
]$ and $[\T{S}(u)\tilde{\delta}]$. Moreover  all inputs into 
$R$ which write  either an element of $[\T{S}(u)\delta_1 
]$ or an element of 
$[\T{S}(u)\tilde{\delta}]$ must have prefix $\T{S}(\alpha)$ or 
$\T{S}(\beta)$. It was also observed above that whenever there is 
some $\zeta  \in X_n^{*}$ such that $\lambda(\alpha \zeta, q_0) 
= u \delta_1 y'$ (or $\lambda(\beta \zeta, q_0) = u \delta_1 y'$) 
for some $y' \in X_n^{\ast}$  and $\pi(\alpha \zeta, q_0) = 
\pi(\beta \zeta, q_0)$ then we have that $\lambda(\beta \zeta, 
q_0) =  u \tilde{\delta} y'$ (or $\lambda(\alpha \zeta, q_0) = u 
\tilde{\delta} y'$. Furthermore since $T_{q_0}$ has finitely many 
states there is a maximum size for the length of a basic open set 
in a minimal cover by basic open sets of the image of any state 
of $T_{q_0}$. These things together with the computation above 
now means that there is an $M \in \mathbb{N}$ such that for all 
words $w \in X_n^{M}$ we have $\pi(u\delta_1 w, q_0) = \pi(u 
\tilde{\delta} w, q_0)$. Therefore we have that 
$(\T{S}(u)\tilde{\delta}, R) = \pi'(u\tilde{\delta}, (\epsilon, 
q_0))$ and $(\T{S}(u)\delta_1, R) = \pi'(u \delta_1, 
(\epsilon, q_0))$ are related under the equivalence relation 
$\sim_A$. \hfill $\square$\\

The above arguments now deal with the case where $(\alpha, \beta)^{T_{q_0}}$ acts as a prefix replacement on $[u \delta] $ for some (possibly empty) prefix $\delta$ of $\im(r)$. Swapping the roles of $\alpha$ and $\beta$ above, this also covers the case where $(\alpha, \beta)^{T_{q_0}}$ acts as a prefix replacement on $[v \delta]$ for some (possibly empty) $\delta$ prefix of $\im(t)$. \hfill $\square$\\
\\
\emph{Proof of Case 2}:
We now deal with the case where $(\alpha, \beta)^{T_{q_0}}$ acts as a prefix replacement on  $[\overline{u}]$ for some prefix $\overline{u}$ of $u$ such that $u = \overline{u}\underline{u}$ (the case of a prefix of $v$ is dealt with analogously). If $\bar{u} = u$ then we are in the previous case, so we may assume that $\bar{u}$ is a proper prefix of $u$ and so $\underline{u}$ is not the empty word.

Observe that all elements of the set $([\overline{u}])T_{q_0}^{-1}$ must have a prefix equal to $\alpha$ or $\beta$ otherwise  $(\alpha, \beta)^{T_{q_0}}$ acts as the identity on $[\overline{u}]$. This is a consequence of the dynamics of finite order elements of the groups $V_n$.

If all elements of $([\overline{u}])S_(\epsilon,q_0)$ have prefix equal $\alpha$ then $\overline{u} = u$ (otherwise $T_{q_0}$ is not injective since $\lambda(\alpha, q_0) = u$). Hence we have a contradiction.

Therefore we may assume that elements of the set 
$([\overline{u}])S_(\epsilon,q_0)$ all have prefixes equal to 
$\alpha$ or $\beta$ and there exists an element with a prefix 
$\alpha$ (since $\lambda(\alpha, q_0) = u$) and one with a 
prefix  $\beta$. Now since there is an element of 
$([\overline{u}])S_{(\epsilon,q_0)}$ with prefix, $\beta$ we must 
have that $v = \lambda(\beta, q_0)$ contains $\overline{u}$ as a 
prefix, since otherwise  $\overline{u} = v \delta'$ for some 
$\delta' \in X_n^{*}$ and $(\alpha, \beta)^{T_{q_0}}$ acts as a 
prefix substitution on $[v \delta' ]$ and we have reduced to the 
previous case.  Therefore $\T{L}_{q_0}(\overline{u}) = 
\T{P}(\alpha, \beta)$ the greatest common prefix of $\alpha$ and 
$\beta$. Let $R := \pi(\T{P}(\alpha, \beta), q_0)$, and let 
$\overline{u}_1 = \lambda(\T{P}(\alpha, \beta), q_0)$. Let 
$\T{S}(\alpha)$ and $\T{S}(\beta)$ be such that $\T{P}(\alpha, 
\beta) \T{S}(\alpha) = \alpha$ and $\T{P}(\alpha, \beta) 
\T{S}(\beta) = \beta$. Since $\T{L}_{q_0}(\overline{u}) = 
\T{P}(\alpha, \beta)$ and $R := \pi(\T{P}(\alpha, \beta), q_0)$, 
then we must have that $([\T{S}(\alpha) ])T_{R} \cup 
([\T{S}(\beta) ])T_{R} = [\overline{u}_2]$, where $\overline{u} = 
\overline{u}_{1}\overline{u}_{2}$. Therefore  
$\lambda(\T{S}(\alpha), R) = \overline{u}_{2}\underline{u}$ and 
$\lambda(\T{S}(\beta), R) = \overline{u}_{2} v'$ for some $v' \in 
X_n^{*}$ such that $v= \bar{u}v'$. 

Now suppose  $v' = \epsilon$. Since $(\alpha, 
\beta)^{T_{q_0}}$ acts as a prefix substitution on 
$[\overline{u}]$, and since $\lambda(\beta, q_0) = \overline{u}$, 
then for any $\psi \in \CCn$ we must have $\lambda(\psi, t) = 
\lambda (\psi,r)$ or the set $\{\lambda(\psi, t)\}$ has a 
non-empty prefix. This is because for any $\psi \in \CCn$ we have:

\begin{IEEEeqnarray}{rCl} 
 (u (\psi)T_{r}) S_{(\epsilon, q_0)}(\alpha, \beta)T_{q_0} &=& (\alpha \psi) (\alpha, \beta) T_{q_0} \nonumber \\
 &=& (\beta \psi)T_{q_0} \nonumber\\
 & =& \overline{u}(\psi)T_{t} 
\end{IEEEeqnarray}

and  $(\alpha, \beta)^{T_{q_0}}$ acts as a prefix substitution on $[\overline{u}]$. In the case that the set $\{\lambda(\psi, t): \psi \in \CCn\}$ has a non-empty prefix, then $t$ is a state of incomplete response and we have a contradiction. If for all $\psi \in \CCn$ we have $\lambda(\psi, t) = \lambda (\psi,r)$ then $t = r$, which forces $\underline{u} = \epsilon$ and so $u = \overline{u}$ which is a contradiction.

Therefore we may assume that $v' \ne \epsilon$. Now since 
 $([\T{S}(\alpha) ])T_{R} \cup ([\T{S}(\beta) ])T_{R} = [\overline{u}_2]$, and $\lambda(\T{S}(\beta), R) = \overline{u}_{2} v'$ and $\lambda(\T{S}(\alpha), R) = \overline{u}_{2} \underline{u}$ where $v' \ne \epsilon$ and $\underline{u} \ne \epsilon$ . Then we must have that $X_n = \{0,1\}$ and $v' =i$ and $\underline{u} = \bar{i}$ where $i \in \{0,1\}$ and $\bar{i} = i+1 \mod 2$. Moreover in this case $r$ and $t$ must be homeomorphism states. However this now means that $(\alpha, \beta)^{T_{q_0}}$ acts on $\overline{u}\bar{i} (\psi)T_{r}$ for any $\psi \in \CCn$ by sending it to $\overline{u}i (\psi)T_{t}$. Therefore since $(\alpha, \beta)^{T_{q_0}}$  acts as a prefix substitution on $[\overline{u}]$ we have that $t$ either has incomplete response or is equal to $r$, and so we conclude that $t= r$. It now follows that $\overline{u}\bar{i} (\psi)T_{r} \mapsto \overline{u}i (\psi)T_{r}$ under $(\alpha, \beta)^{T_{q_0}}$. Therefore $(\alpha, \beta)^{T_{q_0}}$ does not act on $[\overline{u}]$ as a prefix substitution, which is a contradiction.
 
 This covers the case where $(\alpha, \beta)^{T_{q_0}}$ acts  as a prefix replacement on  $[\overline{u}]$ for some prefix  $\overline{u}$ of $u$ such that $u = \overline{u}\underline{u}$.  By swapping the roles of $\alpha$ and $\beta$ we can repeat the  arguments for the case where $(\alpha, \beta)^{T_{q_0}}$ acts  as a prefix replacement on  $[\overline{v}]$ for some prefix   $\overline{v}$ of  $v$. \hfill $\square$\\

We now free all symbols used above.

To show $V_{\sim_A}\subseteq V_n^{T_{q_0}}$, we consider an element $v\in V_{\sim_A}$ such that for $\alpha,\beta\in X_n^+$ where $\alpha\sim_A \beta$ we have $(\alpha\chi)v=\beta\chi$ for all words $\chi\in X_n^\omega$. We shall argue that $v^{T_{q_0}^{-1}}$ is an element of $V_{n}$.

Since $ \alpha \sim_{A} \beta$, let $k$ be minimal so that for 
all words $\Delta \in X_n^{k}$ we have $\pi' (\Delta, 
\pi'(\alpha, (\epsilon, q_0))) = \pi' (\Delta, \pi'(\beta, 
(\epsilon, q_0)))$, such a $k$ exists by Lemma 
\ref{lemma:equivalent at k means transition alike on Xk}. Let $j 
\in \mathbb{N}$ be such that for all states $(w, p')$ of 
$S_{(\epsilon, q_0)}$ and for all words $\Gamma' \in X_n^{j}$ we 
have $|(\Gamma')S_{(w,p')}| \ge 1$. Let $i \in \mathbb{N}$ be 
such that for all states $p$ of $T_{q_0}$, and for all words 
$\Gamma \in X_n^{i}$ we have $|(\Gamma)T_{p}| \ge  \max\{|\alpha| 
+ k, |\beta| + k, j \}$ . 

Let $u \in X_n^{i}$ be an arbitrary 
word of length $i$ such that $(u)T_{q_0} = \alpha x$ for some $x 
\in X_n^{*}$, $|x| \ge k$. Observe that by choice of  $i$ we have 
$(\alpha x)S_{(\epsilon, q_0)} = u_1$ for some non-empty prefix 
$u_1$ of $u$. Since $u_1$ is a prefix of $u$ we may write $u = 
u_1 u_2$ for some $u_2 \in X_n^{*}$. Let $r := \pi(u, q_0)$, and 
let  $((\alpha x)_2, s) := \pi'(\alpha x, (\epsilon, q_0))$. By 
the inversion algorithm $(\alpha x)_2$ is a suffix of $\alpha 
x$.

Now since $T_{q_0}S_{(\epsilon, q_0)} = \id$ it must be the case that for any $\delta$ in $\im(r)$ we have $\lambda'( \delta, ((\alpha x)_2,s)) = u_2 \delta$. This is because $\lambda( u, q_0) = \alpha x$, $r := \pi(u, q_0)$, $((\alpha x)_2, s) := \pi'(\alpha x, (\epsilon, q_0))$ and $\lambda'(\alpha x, (\epsilon, q_0)) = u_1$. Consider the following for arbitrary $\gamma \in \C{C}{n}{}$

\begin{IEEEeqnarray}{rCl}
(u \gamma) T_{q_0} v S_{(\epsilon,q_0)} &=& (\alpha x (\gamma)T_{r}) v S_{(\epsilon,q_0)} \\
&=& (\beta x (\gamma)T_{r})S_{(\epsilon,q_0)} \\ 
&=& (\beta x) S_{(\epsilon, q_0)} (\gamma)T_{r}S_{((\alpha x)_2,s)} \label{eqn:line3} \\
&=& (\beta x) S_{(\epsilon, q_0)} u_2 \gamma.
\end{IEEEeqnarray}

Line $\eqref{eqn:line3}$ in the equation above follows since $\pi'(\alpha \Delta, (\epsilon, q_0)) = \pi'(\beta \Delta, (\epsilon, q_0))$ for any word $\Delta \in X_n^{k}$, in particular, $\pi'(\alpha x, (\epsilon, q_0)) = \pi'(\beta x, (\epsilon, q_0))$. Therefore we have:

\begin{equation*}
(u \gamma) T_{q_0} v S_{(\epsilon,q_0)} = (\beta x) S_{(\epsilon, q_0)} u_2 \gamma
\end{equation*}

Now a similar argument shows that for an $u \in X_n^{i}$ such that $(u)T_{q_0} = \beta x $ for some $x \in X_n^{*}$ of length at least $k$ (by choice of $i$), we also have that, for any $\gamma \in X_n^{k}$, $(u\gamma) T_{q_0} v S_{(\epsilon,q_0)} = \chi \gamma$ for some $\chi \in X_n^{+}$.

Now since $T_{q_0} v S_{(\epsilon,q_0)}$ acts trivially on all cones $[u]$ where $u \in X_n^{i}$,  the above demonstrates $v^{T_{q_0}^{-1}}\in V_n$ and therefore $V_n^{T_{q_0}}=V_{\sim_A}$.

\end{proof}

\section{Class of overgroups of \texorpdfstring{$V_n$}{Lg} containing conjugates of \texorpdfstring{$V_n$}{Lg}}\label{Section: overgroups}

\subsection{Synchronous case}
Let $T$ be a synchronous transducer and let $\T{G}(T)$ be the automaton group generated by its states.  For an element $g\in \T{G}(T)$ and finite word $\alpha\in X_n^*$, we define the homeomorphism $[\alpha]_g$ to act as $g$ on the cone $[\alpha]$ and trivially otherwise.  More specifically, for all $\chi\in X_n^\omega$, $(\alpha\chi)[\alpha]_g=\alpha(\chi)g$ and $[\alpha]_g$ fixes points without the prefix $\alpha$.

We can now define the group of homeomorphisms \[V_n({\T{G}(T)})=\langle V_n,\{[\alpha]_g|\alpha\in X_n^*,g\in \T{G}(T)\}\rangle.\]
See \cite{Nek04} for many more details.  To simplify notation, we will use $V_n(T)$ to mean $V_n({\T{G}(T)})$ as the self-similar group generated by the states of $T$ is understood. 

\begin{theorem} \label{Theorem: ovegroups synchronous}
Let $T=\langle X_n, Q, \pi, \lambda\rangle$ be a minimal, synchronous, synchronizing transducer  with inverse $S=\langle X_n, Q^{-1}, \pi', \lambda'\rangle$.  Then $V_n^{T_{q_0}^{-1}}=V_n(TS)$.
\end{theorem}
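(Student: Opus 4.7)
The plan is to prove both inclusions $V_n^{T_{q_0}^{-1}} \subseteq V_n(TS)$ and $V_n(TS) \subseteq V_n^{T_{q_0}^{-1}}$. The synchronous hypothesis is essential throughout: it makes $T_{q_0}$ (and $S_{q_0^{-1}}$) length-preserving, so for each $\alpha \in X_n^+$ the preimage $T_{q_0}^{-1}([\alpha])$ is a single cylinder $[\hat\alpha]$ with $|\hat\alpha| = |\alpha|$, and the restricted map $T_{q_0}:[\hat\alpha]\to[\alpha]$ acts on tails as the homeomorphism $T_{\pi(\hat\alpha,q_0)}$. This clean block structure is what powers the two explicit decompositions below.

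For the forward inclusion, it suffices to check the conjugate of each small swap $(\alpha,\beta)\in V_n$. Let $\hat\alpha,\hat\beta$ be the preimages above (incomparable, as $T_{q_0}$ is a bijection) and set $r=\pi(\hat\alpha,q_0)$, $r'=\pi(\hat\beta,q_0)$. Tracking $\chi\in[\hat\alpha]$, $\chi\in[\hat\beta]$, and $\chi$ outside their union through $T_{q_0}(\alpha,\beta)S_{q_0^{-1}}$ and using that $\lambda'(\beta,q_0^{-1})=\hat\beta$, $\pi'(\beta,q_0^{-1})=(r')^{-1}$, etc., yields
\[
(\alpha,\beta)^{T_{q_0}^{-1}} \;=\; [\hat\alpha]_{T_r S_{(r')^{-1}}}\cdot[\hat\beta]_{T_{r'} S_{r^{-1}}}\cdot(\hat\alpha,\hat\beta).
\]
The last factor is a small swap in $V_n$, and each $T_r S_{(r')^{-1}}$ is the homeomorphism attached to a state of the product transducer $TS$ (Remark~\ref{Remark: some comments about product}), so it lies in $\mathcal{G}(TS)$. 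Hence the entire product lies in $V_n(TS)$.

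For the reverse inclusion, $V_n\subseteq V_n^{T_{q_0}^{-1}}$ by Corollary~\ref{conjproducesovergroups}, and by the identity $[\alpha]_{g_1g_2}=[\alpha]_{g_1}[\alpha]_{g_2}$ it is enough to show that each generator $[\alpha]_{T_p S_{q^{-1}}}$ (with $p$ a state of $T$ and $q^{-1}$ a state of $S$) lies in $V_n^{T_{q_0}^{-1}}$. Put $\alpha^{*}=\lambda(\alpha,q_0)$ and $t=\pi(\alpha,q_0)$, and set
\[
\tilde v \;=\; (S_{t^{-1}}T_p)(S_{q^{-1}}T_t).
\]
Proposition~\ref{conditionsonsynch}, applied to each bracketed factor (a product of an $S$-state with a $T$-state), gives $\tilde v\in V_n$, hence $[\alpha^{*}]_{\tilde v}\in V_n$. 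A direct calculation on $\chi\in[\alpha]$ versus $\chi\notin[\alpha]$, using that $T_{q_0}$ sends $[\alpha]$ bijectively to $[\alpha^{*}]$ with tail action $T_t$ and that $S_{q_0^{-1}}$ inverts this with tail action $S_{t^{-1}}$, verifies the identity $T_{q_0}[\alpha^{*}]_{\tilde v}S_{q_0^{-1}}=[\alpha]_{T_pS_{q^{-1}}}$.

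The main obstacle is conceptual rather than technical: one must identify the two correct decompositions above. In the forward direction this means recognising that the tail cocycles picked up by conjugating a small swap are exactly the homeomorphisms attached to states of $TS$, and in the reverse direction it means seeing that a local action $T_pS_{q^{-1}}$ on $[\alpha]$ can be ``sandwiched'' by $T_t$ and $S_{t^{-1}}$ to produce a genuine $V_n$-element on the pushed-forward cylinder $[\alpha^{*}]$, with Proposition~\ref{conditionsonsynch} ensuring the sandwich remains in $V_n$. After these observations, both directions reduce to routine verifications using the synchronous bijective structure at each level.
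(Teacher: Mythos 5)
Your proof is correct and follows essentially the same strategy as the paper: conjugating each small swap and reading off the tail cocycles as states of $TS$ for the forward inclusion, and exhibiting the $V_n$-element $\tilde v$ whose conjugate is $[\alpha]_g$ for the reverse inclusion. The only differences are cosmetic — you place the small swap $(\hat\alpha,\hat\beta)$ at the end of the decomposition rather than the beginning, and you cite Proposition~\ref{conditionsonsynch} rather than Proposition~\ref{pro:conjproducessubgroups} to put the sandwiched product $S_{t^{-1}}T_pS_{q^{-1}}T_t$ into $V_n$.
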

\begin{proof}
We first prove $V_n^{T_{q_0}^{-1}}\subseteq V_n(TS)$ by showing that conjugates of small swap (which generate $V_n$) are contained in $V_n(TS)$.

Let $\alpha\perp\beta\in X_n^+$ so that $(\alpha, \beta)\in V_n$ is a small swap.  Let $\chi\in X_n^{\omega}$ and consider the action of  $(\alpha, \beta)^{T_{q_0}^{-1}}$ on the word $(\alpha)S_{q_0^{-1}}\chi$.
\begin{align*}
((\alpha)S_{q_0^{-1}}\chi)(\alpha, \beta)^{T_{q_0}^{-1}}&=(\alpha(\chi)T_{q})(\alpha, \beta) S_{q_0^{-1}}\\
&=(\beta(\chi)T_{q})S_{q_0^{-1}}\\
&=(\beta)S_{q_0^{-1}}(\chi)T_{q}S_{p^{-1}}
\end{align*}
where $q=\pi((\alpha)S_{q_0^{-1}}, q_0)=\pi'(\alpha,q_0)$ and $p=\pi'(\beta, q_0)$. For brevity, let $g=T_{q}S_{p^{-1}}$ and $h=T_{p}S_{q^{-1}}$.  With a similar result for words of the form $(\beta) S_{q_0^{-1}}\chi$, we see that $(\alpha, \beta)^{T_{q_0}^{-1}}$ acts as a small swap before acting as the transducers $g$ and $h$ and therefore
\[(\alpha, \beta)^{T_{q_0}^{-1}}=((\alpha)S_{q_0^{-1}}, (\beta)S_{q_0^{-1}})[(\beta)S_{q_0^{-1}}]_{g}[(\alpha)S_{q_0^{-1}}]_{h}\in V_n(TS)\]
This shows $V_n^{T_{q_0}^{-1}}\subseteq V_n(TS)$.

For the reverse containment, we show generators of $V_n(TS)$ are taken into $V_n$ by conjugation by $T_{q_0}$. From Proposition~\ref{pro:conjproducessubgroups}, we have that $(\alpha, \beta)^{T_{q_0}}\in V_n$.  We must now show that $[\alpha]_g^{T_{q_0}}\in V_n$ for all $\alpha\in X_n^*$ and $g\in \T{G}(TS)$.

Let $\alpha\in X_n^*$, $\chi\in \CCn$, and $g=T_q S_{p^{-1}}\in \T{G}(TS)$ for some states $q,p\in Q$.  Consider the action of $[\alpha]_g^{T_{q_0}}$ on the word $(\alpha)T_{q_0}\chi$. We see
\begin{align*}
((\alpha)T_{q_0}\chi)[\alpha]_g^{T_{q_0}}&=(\alpha(\chi)S_{s^{-1}})[\alpha]_g T_{q_0}\\
&=(\alpha(\chi)S_{s^{-1}}g)T_{q_0}\\
&=(\alpha)T_{q_0}(\chi)S_{s^{-1}}gT_{t}
\end{align*}
where $s=pi'((\alpha)T_{q_0}, q_0)=\pi(\alpha, q_0)=t$.  Recalling the definition of $g$, we have $S_{s^{-1}}gT_{t}=S_{s^{-1}}T_q S_{p^{-1}}T_{t}\in V_n$ by Proposition~\ref{pro:conjproducessubgroups}.
Therefore $V_n^{T_{q_0}^{-1}}=V_n(TS)$.
\end{proof}

\begin{example}\label{example:synchronizingovergroup}
We revisit the synchronous one-way synchronizing transducer in Example~\ref{example:synchronizing}.  Recall that $T$ is a two state transducer with states $a$ and $b$ and thus has inverse $S$ with states $a^{-1}$ and $b^{-1}$. The product transducer $TS$ has four states $aa^{-1}$, $bb^{-1}$, $ab^{-1}$, and $ba^{-1}$.  It is clear that states $aa^{-1}$ and $bb^{-1}$ produce the identity homeomorphism. With some calculation, the states $ab^{-1}$ and $ba^{-1}$ produce identical homeomorphisms of $\CCn$, which simply change each 0 to a 1 and vice versa.  Therefore $\T{G}(TS)$ is a group of size two.

The Nekrashevych group $V_2(TS)$ is the overgroup of $V_2$ in which, after acting as a finite prefix exchange, elements will either act trivially on infinite suffixes or switch all 0s and 1s.  
\end{example}

Theorem~\ref{Theorem: ovegroups synchronous} is a generalisation of the results of \cite{BDJVnF}, in which the authors describe a case of Theorem~\ref{Theorem: ovegroups synchronous}.  Specifically, they consider the case when the self-similar group $\T{G}(TS)$ acts as a semiregular permutation group on each letter of words in $\CCn$.  The overgroup in Example~\ref{example:synchronizingovergroup} is an example of such groups.

\subsection{Asynchronous Case}

As mentioned in the introduction, if a transducer $T$ is asynchronous, it must have non-homeomorphism states.  This provides difficulty when approaching the `automaton group' generated by the states of $T$'.  However, if $T$ does have a homeomorphism state, i.e. there exists a state $q_0$ of $T$ such that $T_{q_0}$ is a homeomorphism, then for any state $q$ of $T$ accessible from  $q_0$, $T_q$ is injective and has clopen image. We use this fact to construct homeomorphisms from the non-surjective states by `filling in the holes' left by the non-surjectivity. This process  was introduced in \cite{BlkYMaisANav} for synchronizing transducers. The groups of homeomorphisms produced in this way help us generalise Theorem~\ref{Theorem: ovegroups synchronous} for asynchronous transducers. 

The following definition is natural to make in light of the results of this section:

\begin{definition}\label{Def: partially invertible}
Let $T$ be a finite transducer all of whose states are injective and have clopen image, then $T$ is called \emph{partially invertible}. 
\end{definition}
Let $\mathfrak{P}_n$ denote the set of all finite partially invertible transducers over the alphabet $X_n$. Note that every invertible transducer is partially invertible. The following proposition shows that $\mathfrak{P}_n$ is closed under taking products of transducers. 

\begin{proposition}\label{Prop: set of partially invertible transducers is closed}
The set $\mathfrak{P}_{n}$ is closed under the usual transducer product defined in Section~\ref{Section: preliminaries}.
\end{proposition}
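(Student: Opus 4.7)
The plan is to unwind the definition of the product transducer, using Remark~\ref{Remark: some comments about product}\ref{Remark: comments about product point 1}, which tells us that $(TR)_{(t,r)} = T_{t}R_{r}$ as a map from $\CCn$ to $\CCn$, and then verify injectivity and the clopen image condition for an arbitrary state $(t,r)\in Q_T\times Q_R$ of $TR$.

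The key preliminary observation I would establish is that if $q$ is an injective state with clopen image, then $T_q:\CCn\to \im(q)$ is a homeomorphism onto its image. Indeed, $T_q$ is a continuous injection (transducer maps are always continuous), its domain $\CCn$ is compact, and $\im(q)$ with the subspace topology is Hausdorff, so $T_q$ is automatically a closed map and hence a homeomorphism onto $\im(q)$. This single topological fact is the real engine of the proof and is the closest thing to an ``obstacle,'' although it is entirely standard.

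With this in hand, injectivity of $(t,r)$ as a map $\CCn\to\CCn$ is immediate: it is the composition of two injections. For the image, observe that
\[
\im((t,r)) = \{(\delta)T_{t}R_{r} : \delta\in\CCn\} = (\im(t))R_{r}.
\]
Since $R_r$ restricts to a homeomorphism from $\CCn$ onto $\im(r)$, and $\im(t)$ is clopen in $\CCn$, the set $(\im(t))R_r$ is clopen in $\im(r)$. But $\im(r)$ is itself clopen in $\CCn$, and a clopen subset of a clopen subset of a space is clopen in the ambient space (its complement in the subset extends to a closed set in the ambient space by intersecting with $\im(r)$). Hence $\im((t,r))$ is clopen in $\CCn$.

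Since $(t,r)\in Q_T\times Q_R$ was arbitrary, every state of the product transducer $TR$ is injective with clopen image, so $TR\in\mathfrak{P}_n$. The product is manifestly finite because $|Q_T\times Q_R|<\infty$, and no additional structural hypotheses are needed. The proof is essentially a routine topological verification; the only mildly nontrivial point is the compact-to-Hausdorff argument upgrading each state to a homeomorphism onto its image.
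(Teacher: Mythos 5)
Your proof is correct, and it takes a genuinely cleaner topological route than the paper's. Both arguments begin identically — compose the state maps via Remark~\ref{Remark: some comments about product}\ref{Remark: comments about product point 1}, dispatch injectivity as a composition of injections, and reduce the clopen-image claim to showing that $(\im(t))R_r$ is clopen. The difference is how that last step is handled. The paper works concretely: it decomposes $\im(t)$ into a finite disjoint union of cones $[\alpha_i]$ and computes each image explicitly as $\lambda_S(\alpha_i,s)\,\im(s')$ where $s'=\pi_S(\alpha_i,s)$, observing that prepending a finite word to a clopen set keeps it clopen and then taking a finite union. You instead invoke the compact-to-Hausdorff principle to upgrade each state map $T_q\colon\CCn\to\im(q)$ to a homeomorphism onto its image, so that the clopen set $\im(t)$ maps to a clopen subset of $\im(r)$, and finish with the (correct) observation that a clopen subset of a clopen subset is clopen in the ambient space. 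Your route buys a shorter, more conceptual proof with no explicit combinatorics; the paper's buys a self-contained calculation using only the transducer formalism, with no appeal to general topology beyond the definition of clopen. Both are valid; yours is arguably the tidier framing for readers comfortable with the standard compactness argument.
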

\begin{proof}
Let $T, S \in \mathfrak{P}_{n}$. Let $t$ be an arbitrary state of $T$ and let $s$ be an arbitrary state of $S$. Consider the state $(t,s)$ of the product $TS$. The map $TS_{(t,s)}: \CCn \to \CCn$ corresponds to the composition of the functions $T_{t} \circ T_{s}$. 

Now as $\im(t)$ is clopen subset of $\CCn$ it may be written as a union of finitely many basic open. Let $\im(t) = \sqcup_{1 \le i \le k} [\alpha_i]$ for some $k \in \N$ and $\alpha_i \in X_n^{*}$.
Fix $1 \le i \le k$ and consider the cone $[\alpha_i]$. Let $s' := \pi_{S}(\alpha_i, s)$. Then $([\alpha_i])T_{s} = \lambda_{S}(\alpha_i, s) \im(s')$ Now since $s'$ has clopen image it follows that $\lambda_{S}(\alpha_i, s) \im(s')$ is also clopen.

Since $i$ was arbitrary, then for any $1\le j \le k$ we have that $([\alpha_j])T_{s}$ is clopen. Since the union of finitely many clopen sets is also clopen it follows that $(\im(t))T_{s}$ is clopen. Therefore $(\CCn)T_{t}T_{s}$ has clopen image and so $TS_{(t,s)}$ also has clopen image. Furthermore since $T_{t}$ and $T_{S}$ are injective $T_{t}\circ T_S$ is also injective, therefore $(t,s)$ is an injective state of $TS$.

Since $(t,s)$ where arbitrary states of $T$ and $S$ respectively, we therefore have that $TS \in \mathfrak{P}_{n}$. This concludes the proof.
\end{proof}

We fix from henceforth $T \in \mathfrak{P}_{n}$. 

The following definition provides a framework from which we build homeomorphisms.  Note that for every non-homeomorphism state $p$ of $T$, $T_p$ is injective and has a clopen image by assumption.

\begin{definition}
Let $T$ be a finite partially invertible transducer . A 
pair of tuples \[((\rho_1, \ldots ,\rho_l),(p_1, \ldots, p_l))\] 
for not necessarily distinct states 
$p_1, \ldots, p_l\in Q$ for $l \in \N$ and not necessarily distinct or 
incomparable words $\rho_1, \ldots ,\rho_l \in X_n^{\ast}$, is 
called a \emph{ viable combination} if $\bigcup_{1 \le i \le l} 
\rho_i \im(p_i) = \CCn$, and for $i \ne j$, $\rho_i \im(p_i) \cap 
\rho_j \im(p_j) = \emptyset$. We call $l$ the \emph{length of the viable combination}. The viable combination $((\rho_1, \ldots ,\rho_l),(p_1, \ldots, p_l))$ is called \emph{effective for $\CCn$} if $l \equiv 1 \mod{n-1}$.
\end{definition}

Let $((\rho_1, \ldots ,\rho_l),(p_1, \ldots, p_l))$ be a viable combination for $T$. If the viable combination is effective for $\CCn$, we may find a a maximal anti-chain of $\{\eta_1, \ldots, \eta_{l}\}$  $\CCn$. We can now define a self-homeomorphism $h$ of $\CCn$ so that $h$ maps $\eta_i\chi$ to $\rho_i (\chi)T_{p_i}$.  In essence, $h$ acts by replacing the prefix $\eta_i$ with $\rho_i$ and then as the transducer $T$ in state $p_i$.  It follows from the definition of a viable combination and the fact that $\{\eta_1, \ldots, 
\eta_{l}\}$ is a maximal anti-chain that $h$ is surjective and injective.  We call any homeomorphism $h$ constructed in this way a \emph{completion of $T$} as used in \cite{BlkYMaisANav}. Notice that if $l \cong r \mod{n-1}$ for some $1 \le r \le n-1$, then there is a maximal antichain of length $l$ in $\CCnr$, thus we may construct a self-homeomorphism $h$ of $\CCnr$ as before. In this case we also call $h$ completion of $T$. Such completions will not play a role in this article.

\begin{remark}
If $T$ is a partially invertible transducer then $T$ has at least one viable combination. For suppose  $p$ is a state of $T$ and $\eta \in X_n^{*}$ is such that $[\eta] \subseteq \im(p)$. Let $j \in \N$ be such that for all $\gamma \in X_n^{j}$ $|\lambda_{\gamma, p}|\ge |\eta|$ and let $\{\varphi_{i} \mid 1 \le i \le l\}$ be the maximal subset of $X_n^{j}$ for which $([\varphi_{i}])T_{p} \subseteq [\eta]$ for all $1 \le i \le j$. For $1 \le i \le j$, let $p_i = \pi_{T}(\varphi_i, p)$, and let $\rho_i \in X_n^{\ast}$ be such that $\lambda_{T}(\varphi, p) = \eta \rho_i$. Then the pair $((\rho_1, \ldots, \rho_l), (p_1, \ldots, p_l))$ is a viable combination of $T$.
\end{remark}

For a transducer $T \in \mathfrak{P}_{n}$ let \[\mathcal{H}(T):= \gen{ h \in H(\CCn) \mid h \mbox{ is a completion of } T } \] be the subgroup of $H(\CCn)$ generated by all completions of $T$, then observe that by the remark above $\T{H}(T)$ is non-empty precisely when there is a viable combination of $\T{H}(T)$ of length congruent to $1$ modulo $n-1$.

Observe that if $R$ is the $\omega$-minimal transducer representing $T$ then $\mathcal{H}(T) = \mathcal{H}(R)$. This is because as states of $R$ correspond to $\omega$-equivalence classes of states of $T$ then any viable combination of $T$ may be realised as a viable combination of $R$ and vice versa. Therefore if $T_1$ and $T_2$ are equivalent transducers, that is the $\omega$-minimal transducer representing $T_1$ is isomorphic to the $\omega$-minimal transducer representing $T_2$ then $\mathcal{H}(T_1) = \mathcal{H}(T_2)$. Let \[\widetilde{\mathfrak{P}}_{n} = \{ \{T\}_{\omega} \mid T \in \mathfrak{P}_{n}\}\] then by Remark~\ref{Remark: some comments about product} part \ref{Remark: transducer product is associative on equivalent transducers and respects equivalence classes} and Proposition~\ref{Prop: set of partially invertible transducers is closed} we have that $\widetilde{\mathfrak{P}}_{n}$  is a monoid.

If the transducer $T$ (as above) is synchronous, then a completion $h$ of $T$  looks like an element of $V_n(T)$.  Specifically, it is the element of $V_n$ that maps the prefix $\eta_i$ to $\rho_i$ for each $i$ multiplied by $\Pi[\rho_i]_{T_{p_i}}$.  This implies that for synchronous $T$, the group generated by completions of $T$ is a subgroup of $V_n(T)$.   
 
If $T$ is asynchronous, partially invertible and at least one of the states $\{p_1,\ldots,p_l\}$ is a non-homeomorphism, then $\{\rho_1,\ldots,\rho_l\}$ cannot be a maximal anti-chain and the previous decomposition of $h$ in a element of $V_n$ and $\T{G}(T)$ is not well-defined.  However, the following proposition shows that $\mathcal{H}(T)$ is still an overgroup of $V_n$, when $T$ satisfies a straightforward condition.

\begin{proposition}\label{proposition: subgroup of completions}
Let $T\in \mathfrak{P}_n$ and let $q$ be a state of $T$ such that $(\chi)T_q=\chi$ for all $\chi\in\CCn$.  Then $V_n\leq \T{H}(T)$.
\end{proposition}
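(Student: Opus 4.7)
My plan is to use the fact that $V_n$ is generated by small swaps together with the observation that, because $T_q = \id$ and hence $\im(q)=\CCn$, the state $q$ provides a ``trivial'' entry that can be used to realise arbitrary prefix-exchange behaviour as a completion of $T$. First I would unpack what the hypothesis buys us: for the state $q$ we have $\rho\,\im(q)=[\rho]$ for every $\rho\in X_n^\ast$, so a tuple $((\rho_1,\dots,\rho_l),(q,\dots,q))$ is a viable combination exactly when $\{\rho_1,\dots,\rho_l\}$ is a maximal antichain in $X_n^\ast$. Moreover, every maximal antichain in the $n$-ary tree $X_n^\ast$ has cardinality congruent to $1\pmod{n-1}$ (starting from $\{\epsilon\}$ and iteratively refining a leaf replaces one element by $n$, changing the count by $n-1$), so any such viable combination is automatically effective for $\CCn$.

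Next I would fix an arbitrary small swap $(\alpha,\beta)\in V_n$, where $\alpha,\beta\in X_n^+$ are incomparable, and extend the antichain $\{\alpha,\beta\}$ to a maximal antichain $\{\alpha,\beta,\gamma_1,\dots,\gamma_k\}$ in $X_n^\ast$. Taking $(\eta_1,\dots,\eta_l)=(\alpha,\beta,\gamma_1,\dots,\gamma_k)$ as the domain antichain and $(\rho_1,\dots,\rho_l)=(\beta,\alpha,\gamma_1,\dots,\gamma_k)$ together with $(p_1,\dots,p_l)=(q,q,\dots,q)$ as the viable combination, the associated completion $h$ of $T$ sends
\begin{equation*}
\alpha\chi\mapsto \beta(\chi)T_q=\beta\chi,\qquad \beta\chi\mapsto \alpha\chi,\qquad \gamma_i\chi\mapsto \gamma_i\chi
\end{equation*}
for every $\chi\in\CCn$. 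This is precisely the action of $(\alpha,\beta)$, so the small swap lies in $\T{H}(T)$. Since the set of small swaps generates $V_n$ (as recalled in the preliminaries), this gives $V_n\leq \T{H}(T)$.

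There is essentially no obstacle beyond bookkeeping; the main thing to be careful about is the length condition in the definition of ``effective for $\CCn$,'' which is why I would explicitly invoke the maximal-antichain parity fact rather than merely observing that the $[\rho_i]$ partition Cantor space. One small variant of the argument would be to prove the stronger statement that every element of $V_n$ (not just a generator) is a completion of $T$: this follows by the same construction applied to the natural pair of maximal antichains associated to any $v\in V_n$, and it may be worth recording in the proof since it does not cost any extra work.
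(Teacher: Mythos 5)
Your proposal is correct and rests on exactly the same key observation as the paper's proof: since $T_q=\id$ we have $\im(q)=\CCn$, so pairing any maximal antichain $\{\rho_1,\dots,\rho_l\}$ with the constant tuple $(q,\dots,q)$ gives a viable combination, and the induced completion is precisely the prefix-exchange map. The paper in fact uses the ``variant'' you mention at the end: it takes an arbitrary $v\in V_n$ with its two maximal antichains $\{\alpha_i\}$ and $\{\beta_i\}$ and directly exhibits $v$ as a completion of $T$, skipping the reduction to small swaps entirely. Your version is not wrong, just a step less direct; on the other hand, you are more careful than the paper about explicitly verifying the ``effective for $\CCn$'' parity condition ($l\equiv 1\pmod{n-1}$), which the paper leaves implicit. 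Either way the content is the same.
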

\begin{proof}
Let $T$ and $q$ be defined as in the statement of the proposition.  Also let $v\in V_n$.  This implies there are maximal antichains $\{\alpha_1,\ldots,\alpha_l\}$ and $\{\beta_1,\ldots,\beta_l\}$ of $\CCn$ such that $(\alpha_i\chi)v=\beta_i\chi$ for all $\chi\in\CCn$.  However, $((\alpha_1,\ldots,\alpha_l)(q,\ldots,q))$ is a viable combination since $\text{im}(q)=\CCn$.  This shows that $v$ is a completion of $T$.
\end{proof}

This demonstrates how $\T{H}(T)$ can be seen as a generalisation of $V_n(T)$, combining the action of states of $T$ with finite prefix exchanges. Therefore, when $T$ has a state that induces the identity homeomorphism, we extend the notation $V_n(T):= \T{H}(T)$.  Note that by the comments in the paragraph preceding Proposition~\ref{proposition: subgroup of completions}, this agrees with the synchronous case.  

As an immediate corollary to Proposition~\ref{proposition: subgroup of completions}, we have the following.

\begin{corollary}
Let $T_{q_0}$ be an initial invertible transducer with inverse $S_{(\epsilon, Q_0)}$.  Then $V_n\leq \T{H}(TS)$.
\end{corollary}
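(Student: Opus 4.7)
The plan is to deduce this corollary directly from Proposition~\ref{proposition: subgroup of completions}, applied to the transducer $TS$. Two hypotheses must be verified: that $TS$ belongs to $\mathfrak{P}_n$, and that $TS$ has some state whose associated self-map of $\CCn$ is the identity homeomorphism.

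For the first, I would invoke Proposition~\ref{Prop: set of partially invertible transducers is closed}, which asserts that $\mathfrak{P}_n$ is closed under the transducer product. Thus it suffices to observe that both $T$ and $S$ individually lie in $\mathfrak{P}_n$. This follows from the standing convention on invertible transducers, in which every state is accessible from an initial state inducing a homeomorphism: combined with the inductive observation (made in the paragraph following Proposition~\ref{Proposition:inversionalgorithm}) that any pair $(w,q)$ accessible from $(\epsilon,q_0)$ satisfies $[w]\subset \im(q)$, together with injectivity of $T_{q_0}$ and $S_{(\epsilon,q_0)}$, this yields that every state of $T$ and every state of $S$ is injective with clopen image. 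Hence $T,S\in \mathfrak{P}_n$, and therefore $TS\in\mathfrak{P}_n$.

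For the second, I would point to Remark~\ref{Remark: some comments about product}(3), which explicitly records that the state $(q_0,(\epsilon,q_0))$ of $TS$ corresponds to the identity homeomorphism of $\CCn$. Indeed, using Remark~\ref{Remark: some comments about product}(1), for any $\chi\in\CCn$ we have $\lambda_{TS}(\chi,(q_0,(\epsilon,q_0))) = ((\chi)T_{q_0})S_{(\epsilon,q_0)} = \chi$, since $S_{(\epsilon,q_0)}$ is the inverse homeomorphism of $T_{q_0}$ by construction.

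With both hypotheses in hand, Proposition~\ref{proposition: subgroup of completions} applies with $T$ replaced by $TS$ and $q$ taken to be $(q_0,(\epsilon,q_0))$, immediately giving $V_n\leq \T{H}(TS)$. There is no substantive obstacle in the argument: the corollary is essentially a matter of assembling three previously established facts (closure of $\mathfrak{P}_n$ under products, the identification of an identity state in $TS$, and the main proposition), and the only minor care needed is to check the invertibility conventions so that all states of $T$ and $S$ are genuinely in $\mathfrak{P}_n$ rather than merely the initial states.
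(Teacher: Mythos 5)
Your proof is correct and follows essentially the same route as the paper: verify $T,S\in\mathfrak{P}_n$, use Proposition~\ref{Prop: set of partially invertible transducers is closed} to get $TS\in\mathfrak{P}_n$, observe that $(q_0,(\epsilon,q_0))$ is an identity state of $TS$, and then apply Proposition~\ref{proposition: subgroup of completions}. You supply somewhat more detail than the paper on why $T$ and $S$ individually lie in $\mathfrak{P}_n$, but the argument is the same.
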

\begin{proof}
Firstly, note that $T,S\in\mathfrak{P}_n$ and therefore $TS\in\mathfrak{P}_n$.  Also, since $S_{(\epsilon, Q_0)}$ is the inverse of $T_{q_0}$, the state $(q_0,(\epsilon, q_0))$ induces the identity homeomorphism in $TS$. By Proposition~\ref{proposition: subgroup of completions}, $V_n\leq \T{H}(TS)$.
\end{proof}

It is interesting to note that in \cite{LodhaMooreGroup}, Lodha and Moore also construct a group by combining Thompson group $F$ with a rational homeomorphism of $\mathfrak{C}_{2}$ that is asynchronous.  This is related to but not the same construction we present here.  

Before we state Theorem \ref{theorem: overgroups asynchronous}, which is the generalization of Theorem \ref{Theorem: ovegroups synchronous} to asynchronous transducers, we require the following corollary of Proposition~\ref{Prop: set of partially invertible transducers is closed}.

\begin{corollary}
Let $T$ be an invertible, one-way synchronizing transducer with inverse $S$. Then $TS \in \mathfrak{P}_{n}$
\end{corollary}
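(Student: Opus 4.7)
The plan is to reduce the statement to a straightforward application of Proposition~\ref{Prop: set of partially invertible transducers is closed}, the closure of $\mathfrak{P}_n$ under products. By that proposition, it suffices to verify that both $T \in \mathfrak{P}_n$ and $S \in \mathfrak{P}_n$. The one-way synchronizing hypothesis is not actually used in the argument; only invertibility of $T$ (together with the standing convention that all states of $T$ are accessible from $q_0$ and all states of $S$ from $(\epsilon, q_0)$) is needed.

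First I would verify $T \in \mathfrak{P}_n$. Fix an arbitrary state $q$ of $T$ and choose $\alpha \in X_n^{\ast}$ with $\pi(\alpha, q_0) = q$; write $\beta = \lambda(\alpha, q_0)$. For any $\delta \in \CCn$ the unfolding $(\alpha \delta) T_{q_0} = \beta \cdot (\delta) T_q$ shows that $T_q$ is essentially the restriction of the homeomorphism $T_{q_0}$ to the cone $[\alpha]$ with the common prefix $\beta$ stripped off. Because $T_{q_0}$ is a bijection of $\CCn$, this forces $T_q$ to be injective, and its image is obtained from the clopen set $T_{q_0}([\alpha]) \subseteq [\beta]$ by stripping $\beta$, hence is clopen. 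Thus $T \in \mathfrak{P}_n$.

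Next I would verify $S \in \mathfrak{P}_n$ by an analogous argument, noting that $S_{(\epsilon, q_0)}$ is a homeomorphism (it is the inverse of $T_{q_0}$). For an arbitrary state $(w, q)$ of $S$, pick $\psi \in X_n^{\ast}$ with $\pi'(\psi, (\epsilon, q_0)) = (w, q)$ and set $\phi = \lambda'(\psi, (\epsilon, q_0))$. Then for every $\delta \in \CCn$,
\[
(\psi\delta)\,S_{(\epsilon,q_0)} \;=\; \phi \cdot (\delta)\,S_{(w,q)}.
\]
Since $S_{(\epsilon,q_0)}$ is injective on $[\psi]$, cancellation of the common prefix $\phi$ yields injectivity of $S_{(w,q)}$. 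For the image, $S_{(\epsilon, q_0)}([\psi])$ is the preimage of $[\psi]$ under the homeomorphism $T_{q_0}$ and hence is clopen; it is moreover contained in $[\phi]$, so stripping $\phi$ shows $\im((w,q))$ is clopen. Thus $S \in \mathfrak{P}_n$.

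Finally, Proposition~\ref{Prop: set of partially invertible transducers is closed} applied to the pair $T, S \in \mathfrak{P}_n$ yields $TS \in \mathfrak{P}_n$, as required. The only point demanding care is the verification for $S$, where one must correctly identify the states of $S$ as those arising from the inversion algorithm and exploit the earlier observation that every such accessible pair $(w, q)$ satisfies $[w] \subset \im(q)$; once that is in hand, the injectivity-plus-clopenness property transfers from the global homeomorphism $S_{(\epsilon, q_0)}$ to each of its state-level restrictions.
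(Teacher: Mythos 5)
Your proof is correct and follows essentially the same route as the paper: show $T, S \in \mathfrak{P}_n$ via accessibility from the homeomorphism states $q_0$ and $(\epsilon, q_0)$, then invoke Proposition~\ref{Prop: set of partially invertible transducers is closed}. The paper treats the membership $T, S \in \mathfrak{P}_n$ as immediate from an observation stated earlier in Section~\ref{Section: overgroups}, whereas you spell out the cone-restriction argument in detail, and your side remark that one-way synchronicity is not actually needed is accurate.
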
 
\begin{proof}
Observe that by virtue of having a homeomorphism state $q_0$ such that all states of $T$ are accessible from $q_0$  it follows that $T \in \mathfrak{P}_{n}$. In a similar way $S$ has a state $(\epsilon, q_0)$ so that $S_{(\epsilon, q_0)}$ represents the inverse homeomorphism of $T_{q_0}$, and all states of $S$ are accessible from $(\epsilon, q_0)$, hence $S \in \mathfrak{P}_{n}$. It therefore follows that $TS \in \mathfrak{P}_{n}$ by Proposition~\ref{Prop: set of partially invertible transducers is closed}.
\end{proof}

The corollary above implies that for an invertible, one way synchronizing transducer $T$ with inverse $S$ the group $V_n(TS)$ is well defined.

\begin{theorem} \label{theorem: overgroups asynchronous} 
Let $T_{q_0}= \gen{X_n, Q_{T}, \pi_{T}, \lambda_{T}}$ be a  minimal, one-way synchronizing transducer with invers $S_{(\epsilon, q_0)}$. Then $V_n^{{T^{-1}_{q_0}}} = V_n(TS)$.
\end{theorem}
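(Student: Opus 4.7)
The plan is to establish the two inclusions $V_n^{T_{q_0}^{-1}}\subseteq V_n(TS)$ and $V_n(TS)\subseteq V_n^{T_{q_0}^{-1}}$ in parallel with the strategy of Theorem~\ref{Theorem: ovegroups synchronous}, adapted to allow $T$ asynchronous. The central technical novelty compared to the synchronous case is that the preimage of a cone under $T_{q_0}$ is no longer a single cone but a clopen set, hence a finite disjoint union of strictly smaller cones; this finer combinatorial decomposition will feed directly into the viable combination describing the resulting completion.

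For $V_n^{T_{q_0}^{-1}}\subseteq V_n(TS)$, I would show that each conjugate $(\alpha,\beta)^{T_{q_0}^{-1}}$ of a small swap is itself a completion of $TS$. Since $T_{q_0}$ is a homeomorphism, we may write $([\alpha])T_{q_0}^{-1}=\bigsqcup_{i}[\alpha'_i]$ and $([\beta])T_{q_0}^{-1}=\bigsqcup_{j}[\beta'_j]$ as finite disjoint unions of cones. For each $i$, reading $\alpha'_i$ through $T_{q_0}$ outputs some word of the form $\alpha\eta_i$ and terminates in a state $p_i$, so for every $\chi\in\CCn$,
\[
(\alpha'_i\chi)(\alpha,\beta)^{T_{q_0}^{-1}}=(\beta\eta_i(\chi)T_{p_i})S_{(\epsilon,q_0)}=\rho_i(\chi)TS_{(p_i,r_i)},
\]
where $\rho_i$ and $r_i$ are the output and terminal state produced by reading $\beta\eta_i$ through $S_{(\epsilon,q_0)}$ via the inversion algorithm. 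A symmetric formula applies on each $[\beta'_j]$, and on the complement $\CCn\setminus\bigl(\bigsqcup_i[\alpha'_i]\cup\bigsqcup_j[\beta'_j]\bigr)$ the conjugate acts as the identity, which we express via the identity state $(q_0,(\epsilon,q_0))$ of $TS$. These data assemble into a maximal antichain paired with a viable combination of $TS$, so $(\alpha,\beta)^{T_{q_0}^{-1}}$ is a completion of $TS$ and lies in $V_n(TS)$.

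For the reverse inclusion, it suffices (since $V_n(TS)$ is generated by completions) to show that each completion $h$ of $TS$ satisfies $h^{T_{q_0}}=S_{(\epsilon,q_0)}hT_{q_0}\in V_n$. Let $h$ be defined by antichain $\{\eta_i\}$ and viable combination $((\rho_i),(s_i))$ with $s_i=(t_i,u_i)$. For input $ux'\in\CCn$ with $u$ sufficiently long, $v_u:=(u)S_{(\epsilon,q_0)}$ has some $\eta_i$ as a prefix; writing $v_u=\eta_i v'_u$ and $(w,r):=\pi'(u,(\epsilon,q_0))$, one computes that $((ux')S_{(\epsilon,q_0)})h\cdot T_{q_0}$ equals $(\rho_i)T_{q_0}\cdot\bigl((v'_u(x')S_{(w,r)})TS_{s_i}\bigr)T_{q_1}$, with $q_1=\pi(\rho_i,q_0)$. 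After absorbing the finite transducer outputs obtained from processing $v'_u$ through $TS_{s_i}$ and then through $T_{q_1}$ into the growing prefix, the remaining action on the suffix $x'$ is governed by a four-fold composition of the form $S_{(w,r)}\circ T_{t'_i}\circ S_{u'_i}\circ T_{q_2}$, whose states are all determined by $u$. For $u$ long enough, the synchronizing property of $T$ forces each intermediate transducer thread to reach a stable state, and the inverse relation $T_{q_0}S_{(\epsilon,q_0)}=\id$ together with minimality of $T$ forces this four-fold composition to be $\omega$-equivalent to the identity; thus the tail acts trivially and $h^{T_{q_0}}\in V_n$.

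The main obstacle is this alignment argument in the reverse inclusion. In the synchronous setting, the synchronizing property of $T$ immediately forces $T$ and $S$ into mutually inverse states after $k$ letters; here, because $S$ is not synchronizing and output lengths vary between states, the state of $S$ lags behind that of $T$ and the four transducer threads do not stabilise in a coordinated way. Overcoming this lag will require choosing $u$ long enough that the outputs at each intermediate transducer are themselves long enough for the next thread to stabilise, cascading through all four transducers before the inverse relation $T_{q_0}S_{(\epsilon,q_0)}=\id$ can be invoked to collapse the tail composition to the identity.
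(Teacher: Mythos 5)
Your overall strategy is the right one and parallels the paper's: show conjugates of $V_n$-elements are completions of $TS$, then show conjugates of completions land back in $V_n$. The forward inclusion is fine (the paper conjugates an arbitrary $v\in V_n$ realized by a bi-synchronizing transducer $B_{p_0}$ and tracks the active state of $T_{q_0}B_{p_0}S_{(\epsilon,q_0)}$; your small-swap version gives the same conclusion via generators). The issue is in the reverse inclusion.

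You correctly arrive at the four-fold composition $S_{(w,r)}\circ T_{t'_i}\circ S_{u'_i}\circ T_{q_2}$ governing the tail, but then claim that ``the inverse relation $T_{q_0}S_{(\epsilon,q_0)}=\id$ together with minimality of $T$ forces this four-fold composition to be $\omega$-equivalent to the identity.'' This is not true: the four states are reached along different input threads with no reason to be mutually inverse, and the composition will in general permute (and lengthen or shorten) a finite prefix of $x'$. What is true, and what you actually need, is that the composition \emph{acts as a prefix replacement after a uniformly bounded read}, so that for a fixed $r'$ and any $\alpha\in X_n^{r'}$, $(\alpha\rho)$ is mapped to $\delta\rho$ for some finite $\delta$. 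The paper obtains exactly this by the rebracketing $S_{(w,r)}T_{t'_i}S_{u'_i}T_{q_2}=(ST)_{((w,r),t'_i)}(ST)_{(u'_i,q_2)}$ together with the lemma that there is a single $r$ such that for \emph{every} state pair $(s,t)$ of $ST$ and every $\alpha\in X_n^{r}$, $\pi_{ST}(\alpha,(s,t))$ is a state of the identity transducer $S_{(\epsilon,q_0)}T_{q_0}$. That lemma is proved using the synchronizing hypothesis on $T$: once the $S$-output is long enough (length $\ge k$), the $T$-component of the product state is forced, and one lands in a state of the identity. Your cascading heuristic gestures at this, but without the rebracketing into $(ST)$ factors and the uniform lemma you cannot conclude that the tail stabilizes: the $S$-thread is not synchronizing, so tracking four threads separately does not by itself force coordination. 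I would replace the ``$\omega$-equivalent to the identity'' claim by the precise prefix-replacement lemma for $(ST)_{(s,t)}$ and apply it twice after rebracketing; that closes the gap and gives the bound $j_2+r'$ on the prefix length needed, as in the paper.
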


\begin{proof}
First we show that $V_n^{{T^{-1}_{q_0}}} \subseteq V_n(TS)$  

Let $v \in V_n$ and let $B_{p_0} = \gen{X_n, Q, \pi_{B}, \lambda_B}$ be an initial, bi-synchronizing transducer with core equal to the identity transducer representing $v$. Let $\id$ be the core of $B$ (the core of $B$ consists of one state which acts as the identity). Let $j_1 \in \N$ be such that for $\gamma \in X_n^{j_1}$ we have $\lambda_{B}(\gamma, p) = \id$ for all states $p$ of $B_{p_0}$. Let $j_2 \in \N$ be such that for all word $\delta \in X_n^{j_2}$, we have  $|\lambda_{T}(\delta, T_{q_0})|>j_i$, and moreover that $\lambda_{B}(\lambda_{T}(\delta, T_{q_0}),p_0)$ is non-empty.

Therefore given any word $\delta \in X_n^{j_2}$, after processing $\delta$ through $T_{q_0} B_{p_0} S_{(\epsilon, q_0)}$ the active state is of the form $(t, \id, s)$ for appropriate states $t$ of $T$ and $s$ of $S$. Since $\id $ is the single state identity transducer, we may identify $(t, \id, s)$ with the state $(t,s)$ of $TS$. This means that after processing any word from $X_n^{j_2}$ the active state of the transducer $T_{q_0} B_{p_0} S_{(\epsilon, q_0)}$ acts a state of $TS$.

Let $\{\delta_1,\ldots,\delta_{n^j_2}\}=X^{j_2}_n$. Since $T_{q_0} B_{p_0} S_{\epsilon, q_0}$ is a homeomorphism and $X_n^{j_2}$ is a maximal anti-chain for $\CCn$, then the following tuple is a viable combiniation: \[\Big(\big((\delta_1)T_{q_0} B_{p_0} S_{(\epsilon, q_0)}, \ldots, (\delta_{n^{j_2}})T_{q_0} B_{p_0} S_{(\epsilon, q_0)}\big), \big(r_1, \ldots, r_{n^{j_2}}\big)\Big)\] where $r_a$ is the state of $TS$ acting as the active state of $(\delta_a)T_{q_0} B_{p_0} S_{(\epsilon, q_0)}$. It follows that $T_{q_0} B_{p_0} T^{-1}_{q_0} \in V_n(TS)$.

In order to show that $V_n(TS) \subseteq V_n^{T^{-1}_{q_0}}$, it suffices to show that $h^{T_{q_0}} \in V_n$ for all homeomorphisms $h$ which are completions of $TS$. Therefore fix a homeomorphism $h$ a completion of $TS$.

We begin by claiming that there is a fixed $r$ such that given any pair $(s,t)\in Q_S\times Q_T$, word $\alpha \in X_n^{r}$, and word $\rho \in \CCn$, $(\alpha \rho)(ST)_{(s,t)} = \beta \rho$ for some $\beta \in X_n^{\ast}$. Let $\gamma \in X_n^\ast$ be such that $\pi_{S}(\gamma, (\epsilon, q_0)) =s$ and $k$ be the synchronizing level of $T$.  Also let $j\in \N$ such that for all $\delta\in X_n^j$, we have $|\lambda_S(\delta,s)|>k$. We then see \[\pi_{ST}(\gamma\delta, ((\epsilon, q_0,), q_0)) = \pi_{ST}(\delta, (s,t'))= \pi_{ST}(\delta, (s,t))\] for a specific $t'\in Q_T$. This implies that $\pi_{ST}(\delta, (s,t))$ is a state of $T^{-1}_{q_0}T_{q_0}$, the identity homeomorphism, and the claim follows setting $r=j$.

Since $h$ is a completion of  $TS$, there is $j_1 \in \N$ such that for $\alpha \in X_n^{j_1}$ and $\rho \in \CCn$, we have $(\alpha\rho) h = \beta (\rho)(TS)_{(t,s)}$ for some $\beta \in X_n^{*}$ and $(t,s)\in Q_T\times Q_S$.  Let $j_2 \in \N$ be such that for any word $\gamma \in X_n^{j_2}$, we have $|\lambda_{S}(\gamma, (\epsilon,q_0))|  > j_1$.  Now consider $h^{T_{q_0}} = S_{(\epsilon, q_0)} h T_{q_0}$. Let $\gamma \in X_n^{j_2}$, and let $\chi \in \CCn$ be arbitrary. We then see $(\gamma \chi)h^{T_{q_0}} = \delta (\chi)S_{q'}(TS)_{(t,s)} T_{q}$ where $q', s\in Q_S$ and $q, t\in Q_T$. We may re-bracket $S_{q'}(TS)_{(t,s)}T_{q}$ as  $(ST)_{(q', t)}(ST)_{(s,q)}$. It follows by the claim in the previous paragraph that $(ST)_{(q', t)}(ST)_{(s,q)}$ acts as a prefix replacement twice.  Specifically, there is an $r' \in \N$ such that, for any word $\alpha \in X_n^{r'}$ and any $\rho \in \CCn$, $(\alpha \rho)(ST)_{(q', t)}(ST)_{(s,q)} = \delta\rho$ for some $\delta' \in X_n^{*}$. 

We can conclude that for $\gamma\in X_n^{j_2+r'}$ and $\chi\in \CCn$,  $(\gamma\chi)h^{T_{q_0}}=\delta\chi$ for some $\delta\in X_n^*$, so $h^{T_{q_0}} \in V_n$ as required.

\end{proof}

\begin{remark}
If $T$ in the theorem above is a synchronous, one-way synchronizing, invertible transducer then we recover Theorem~\ref{Theorem: ovegroups synchronous}.
\end{remark}

\section{The group \texorpdfstring{$\T{H}(T)$}{Lg} for \texorpdfstring{$T \in \mathfrak{P}_{n}$}{Lg}}\label{Section: H(T)}
In this section, we investigate the groups $\T{H}(T)$ for $T \in \mathfrak{P}_{n}$ further. The aim of this section is to show that whenever $T$ in $\mathfrak{P}_{n}$  satisfies some technical condition, the derived subgroup  of $\T{H}(T)$ is simple.

We begin with some definitions.

\begin{definition}\label{Def:fullgroups}
Let $G$ be a group of homeomorphisms of a topological space $X$. A homeomorphism $h$ of $X$ is said to \emph{locally agree with $G$} if for every point $x \in X$, there is an open neighbourhood $U$ of $X$ and $g \in G$, such that $h\vert_{U} = g\vert_{U}$. The group $G$ is called \emph{full} if every homeomorphism of $X$ which locally agrees with $G$ is an element of $G$.
\end{definition}

\begin{remark}\label{Remark:localhomeosagreeonfiniteclopencover}
Let $G$ be a full group of homeomorphisms of $\CCn$ and $h$ be a homeomorphism of $\CCn$ that locally agrees with $G$. Since $\CCn$ is compact, there is a finite set $\{u_1, u_2, \ldots, u_r\} \subset X_{n}^{*}$  and $g_1, g_2 \ldots, g_r \in G$ such that $h\vert_{[u_i]} =  g_{i} \vert_{[u_i]}$ for all $1 \le i \le r$.
\end{remark}

\begin{definition}\label{Def:flexible}
Let $G$ be a group of homeomorphisms of a topological space $X$. The group $G$ is called \emph{flexible} if for every pair $E_1, E_2$ of proper non-empty clopen subsets of $X$, there is a $g \in G$ such that  $(E_1)g \subset E_2$.
\end{definition}

\begin{remark}\label{Remark:VnTisfullandflexible}
Let $T$ be an invertible, synchronous transducer, then $V_{n}(T)$ is full and flexible. That $V_{n}(T)$ is flexible follows since $V_{n}\le V_{n}(T)$ is flexible. To see that $V_{n}(T)$ is full, we observe that elements of $V_{n}(T)$ are precisely those which locally agree with $\T{H}(T)$. In particular, a  homeomorphism of $\CCn$ which locally agrees with  $V_{n}(T)$, locally agrees with $\T{H}(T)$.
\end{remark}

The following result concerning full and flexible group of homeomorphisms of Cantor space is a restatement of a result of Matui (\cite{HMatui}) which can be found in \cite{JBelkFMatucciJHyde}

\begin{theorem}\label{Thm:fullflexiblegroupshavesimplederivedsubgroups}
Let $G$ be a full, flexible group of homeomorphisms of $\CCn$, then $[G,G]$ is simple.
\end{theorem}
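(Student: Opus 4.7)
The plan is to carry out the classical Matui-style argument showing that the commutator subgroup of a full, flexible group of homeomorphisms of Cantor space is simple. Let $N \trianglelefteq [G,G]$ be a nontrivial normal subgroup and let $h \in N$ with $h \neq \id$. The first step is to produce a proper clopen witness set: pick $x \in \CCn$ with $(x)h \neq x$ and, by continuity and the fact that $\CCn$ is Hausdorff and zero-dimensional, choose a proper clopen neighbourhood $U$ of $x$ small enough that both $(U)h \cap U = \emptyset$ and $(U)h^{-1} \cap U = \emptyset$.

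Next I would establish the reduction that $[G,G]$ is generated by its "small support" elements, namely the subgroups $H_E := \{g \in G : \mathrm{supp}(g) \subseteq E\}$ as $E$ ranges over proper clopen subsets of $\CCn$. This uses fullness: given any $g \in G$ and any partition of $\CCn$ into two clopen halves $A \sqcup B$, the two homeomorphisms that agree with $g$ on $A$ (resp.\ $B$) and with $\id$ elsewhere are each elements of $G$, because they locally agree with $G$. Decomposing commutator generators of $[G,G]$ in this way shows $[G,G] = \langle [H_E, H_E] : E \subsetneq \CCn \text{ proper clopen}\rangle$.

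The key step is the commutator trick inside $U$. For $g \in H_U$, the element $[h, g] \in N$ is supported on the disjoint clopen union $U \sqcup (U)h^{-1}$, acting as $g^{-1}$ on $U$ and as $hgh^{-1}$ on $(U)h^{-1}$. For any other $k \in H_U$, the element $k$ commutes with the $(U)h^{-1}$-piece (disjoint supports), so $[k,[h,g]] \in N$ is supported only in $U$ and equals $[k, g^{-1}]$ there. As $g, k$ range over $H_U$ this shows $[H_U, H_U] \subseteq N$. To spread this to an arbitrary proper clopen $E$, apply flexibility to obtain $f \in G$ with $(E)f \subseteq U$, so that $f H_E f^{-1} \subseteq H_U$ and hence $f [H_E, H_E] f^{-1} \subseteq [H_U, H_U] \subseteq N$. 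Using that $N$ is normal in $[G,G]$, one either (a) arranges $f \in [G,G]$ by pre-composing with an element supported away from $E$ that lies in $[G,G]$, or (b) replaces $f$ by a commutator built from flexibility witnesses; in either case we conclude $[H_E, H_E] \subseteq N$ for every proper clopen $E$, which together with the reduction step forces $N = [G,G]$.

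The main obstacle is the final conjugation-spreading step: flexibility naturally produces conjugators in $G$, whereas $N$ is only assumed normal in $[G,G]$. Handling this cleanly — by verifying that the flexibility witness $f$ can be chosen in $[G,G]$, or by re-expressing $f H_E f^{-1}$ via a commutator-valued rearrangement — is the delicate technical point. Once this is settled, the isolation trick in paragraph three is purely algebraic and the overall conclusion $N = [G,G]$ is immediate.
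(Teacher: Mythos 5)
The paper itself gives no proof of this statement: it is presented as a restatement of a theorem of Matui \cite{HMatui}, quoted from \cite{JBelkFMatucciJHyde}. Judged on its own, your outline has the right shape (support-isolation via a double commutator, fragmentation, then flexibility), but there are two genuine gaps \emph{before} the one you flag in your final paragraph. The fragmentation step in your second paragraph is simply wrong. For $g\in G$ and a clopen partition $\CCn = A\sqcup B$, the map that agrees with $g$ on $A$ and with $\id$ on $B$ has image $(A)g\cup B$, which is not all of $\CCn$ unless $(A)g=A$; so it is usually not even a bijection of $\CCn$, hence not a homeomorphism, and fullness cannot be invoked. Consequently the asserted identity $[G,G]=\langle [H_E,H_E] : E\subsetneq \CCn \mbox{ proper clopen}\rangle$ has no supporting argument, and it is not obviously true for an arbitrary full, flexible $G$.

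More seriously, the very first commutator already overreaches the normality you are given. You claim $[h,g]\in N$ for $g\in H_U$. Writing $[h,g]=h\cdot (gh^{-1}g^{-1})=h\cdot (h^{-1})^{g^{-1}}$, the second factor lies in $N$ only when $g^{-1}$ normalises $N$; but $N$ is only assumed normal in $[G,G]$, and there is no reason for an element of $H_U$ to lie in $[G,G]$. Unpacking $[k,[h,g]]$ as a product of conjugates of $h^{\pm 1}$ shows the conjugators $k^{-1}$, $g^{-1}k^{-1}$, $g^{-1}$ must all normalise $N$, so the same obstruction recurs. This is an earlier and distinct problem from the flexibility-witness issue you single out. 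The standard Epstein-style fix is to re-nest so that $h$ sits \emph{outside} commutators of $H_U$-elements, which automatically lie in $[G,G]$: with $g,k,g',k'\in H_U$, set $c=[g,k]$ and $c'=[g',k']$ (both supported in $U$ and both in $[G,G]$), check $[h,c]=h\cdot(h^{-1})^{c^{-1}}\in N$, and then $[[h,c],c']\in N$ is supported in $U$ and equals $[c^{-1},c']$ there. But note this only yields $[\,[G,G]\cap H_U,\, [G,G]\cap H_U\,]\subseteq N$, strictly weaker than your claimed $[H_U,H_U]\subseteq N$, so both the generation step and the conjugation-spreading step would have to be reworked to close the proof.
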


We now discuss when $\T{H}(T)$ is both flexible and full.  In the following lemmas, we demonstrate that for all $T \in \mathfrak{P}_{n}$ such that $T$ has at least one completion in $H(\CCn)$, $\T{H}(T)$ is flexible.  In essence, we argue that given any two prefixes, there exists a  completion of an appropriate transducer that that can exchange those prefixes.

\begin{lemma}\label{lemma:buildingcompletionsfromothers}
Let $T \in \mathfrak{P}_{n}$ and let $h \in H(\CCn)$ be a completion of $T$. Let $g \in V_{n}$, and let $\{ \eta_1, \ldots \eta_{l}\}$ and $\{\nu_1, \ldots, \nu_{l}\}$ be  maximal antichains such that, for $\delta \in \CCn$ and for $1 \le i \le l$, $(\eta_i\delta)g = \nu_i \delta$. Define a map $f$ by $\eta_i \delta \mapsto \nu_i (\delta)h$ then $f$ is also a completion of $T$.
\end{lemma}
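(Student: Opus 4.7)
The plan is to exhibit $f$ directly as a completion of $T$ by refining both the antichain $\{\eta_1, \ldots, \eta_l\}$ and the antichain coming from $h$. Since $h$ is itself a completion of $T$, there exist a viable combination $\bigl((\rho_1, \ldots, \rho_{l'}), (p_1, \ldots, p_{l'})\bigr)$ of $T$ and a maximal antichain $\{\mu_1, \ldots, \mu_{l'}\}$ of $\CCn$ with $(\mu_j \chi)h = \rho_j (\chi)T_{p_j}$ for every $\chi \in \CCn$ and every $1 \le j \le l'$.

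First I would form the refined antichain $\mathcal{A} = \{\eta_i \mu_j : 1 \le i \le l,\ 1 \le j \le l'\}$, which is maximal because for each fixed $i$ the family $\{\eta_i \mu_j\}_j$ is a maximal antichain inside the cone $[\eta_i]$, and the $\{[\eta_i]\}$ partition $\CCn$. Applying the definitions of $f$ and $h$ in turn yields
$$
(\eta_i \mu_j \chi)\,f \;=\; \nu_i \,(\mu_j \chi)h \;=\; \nu_i \rho_j \,(\chi)T_{p_j}
$$
for every $\chi \in \CCn$, so $f$ has exactly the form required of a completion of $T$ associated to $\mathcal{A}$ and the tuple $\bigl((\nu_i \rho_j)_{i,j},(p_j)_{i,j}\bigr)$.

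It then remains to verify that $\bigl((\nu_i \rho_j)_{i,j},(p_j)_{i,j}\bigr)$ is a viable combination for $T$. Coverage is immediate:
$$
\bigcup_{i,j} \nu_i \rho_j \,\im(p_j) \;=\; \bigcup_i \nu_i \Bigl(\bigcup_j \rho_j \,\im(p_j)\Bigr) \;=\; \bigcup_i \nu_i \CCn \;=\; \CCn,
$$
where the middle equality uses that $\bigl((\rho_j),(p_j)\bigr)$ is viable for $T$, and the last uses that $\{\nu_i\}$ is a maximal antichain. For disjointness, if $(i_1,j_1) \ne (i_2,j_2)$ then either $i_1 \ne i_2$, in which case $\nu_{i_1}\CCn$ and $\nu_{i_2}\CCn$ are disjoint because $\{\nu_i\}$ is an antichain, or $i_1 = i_2$ and $j_1 \ne j_2$, in which case disjointness of $\rho_{j_1}\im(p_{j_1})$ and $\rho_{j_2}\im(p_{j_2})$ inherits from the viability of $\bigl((\rho_j),(p_j)\bigr)$. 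Effectiveness is automatic since any maximal antichain in $\CCn$ has cardinality $\equiv 1 \pmod{n-1}$, so $l \cdot l' \equiv 1 \pmod{n-1}$.

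There is no serious obstacle here: the argument is a direct unpacking of the definition of a completion combined with the natural common refinement of two maximal antichains. The only thing to be careful about is keeping the indices straight in the refined antichain and in the corresponding pair of tuples, but once the refinement is written down the verification of viability is essentially a one-line computation.
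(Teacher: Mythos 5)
Your proof is correct and follows essentially the same route as the paper: refine the antichain to $\{\eta_i\mu_j\}$, observe that $f$ sends $\eta_i\mu_j\delta$ to $\nu_i\rho_j(\delta)T_{p_j}$, and note that $\bigl((\nu_i\rho_j)_{i,j},(p_j)_{i,j}\bigr)$ is a viable combination. You are a bit more explicit than the paper in checking coverage, disjointness, and effectiveness, but there is no substantive difference.
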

\begin{proof}
Let $g \in V_{n}$. There are maximal antichains  $\{ \eta_1, \ldots \eta_{l}\}$ and $\{\nu_1, \ldots, \nu_{l}\}$ of $X_{n}^{*}$ such that, for $\delta \in \CCn$ and $i \in \{1, \ldots,l\}$, the action of $g$ on $\CCn$ is given by $(\eta_{i}\delta)g = \nu_i\delta$. Let $h$ be a completion of $T$. By definition, there is a viable combination $((\rho_1, \ldots, \rho_{r}), (p_1, \ldots, p_r))$ of $T$ where $r \in \N$ and a maximal antichain $(\mu_1, \ldots, \mu_r)$, such that for any $\delta \in \CCn$ and for $1 \le i \le r$, $(\mu_i \delta)h = \rho_i (\delta)T_{p_i}$. For $1 \le i \le l$, set $\vec{\rho}_{i}:= (\nu_i\rho_1, \ldots, \nu_i\rho_{r})$ and $\vec{p}_{i} =(p_1, \ldots, p_r)$. Observe that the set $((\vec{\rho}_1, \ldots, \vec{\rho}_{l}),(\vec{p}_{1}, \ldots, \vec{p}_{l})$ is a viable combination for $T$. Moreover the map $f'$ given by $\eta_i\mu_j\delta \mapsto \nu_i\rho_j (\delta)T_{p_j}$, for $1 \le i \le l$, $1 \le j \le r$ and $\delta \in \CCn$, is in fact equal to the map $f$ given by $\eta_i\gamma \mapsto \nu_i(\gamma)h$, for $1 \le i \le l$ and $\gamma \in \CCn$. We therefore conclude, since $f'$ is a completion of $T$, that $f$ is a completion of $T$.
\end{proof}

\begin{proposition}\label{Prop:groupofcompletionsisfelxible}
Let $T \in \mathfrak{P}_{n}$, and suppose that $T$ has one completion in $H(\CCn)$, then $\T{H}(T)$ is flexible.
\end{proposition}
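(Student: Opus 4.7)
The plan is to establish the stronger statement $V_n \leq \T{H}(T)$, from which flexibility of $\T{H}(T)$ follows immediately from the well-known flexibility of $V_n$ on $\CCn$. Fix a completion $h$ of $T$ in $H(\CCn)$, which exists by hypothesis. The key tool is Lemma~\ref{lemma:buildingcompletionsfromothers}, which lets me mass-produce new completions of $T$ by twisting $h$ with arbitrary elements of $V_n$.

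Concretely, any $v \in V_n$ is specified by a pair of maximal antichains $\{\eta_i^{(1)}\}_{i=1}^{l}$ and $\{\eta_i^{(2)}\}_{i=1}^{l}$ in $X_n^{*}$ of a common length $l \equiv 1 \pmod{n-1}$, together with a permutation $\sigma \in S_l$, so that $v : \eta_i^{(1)}\delta \mapsto \eta_{\sigma(i)}^{(2)}\delta$ for every $\delta \in \CCn$. I would choose any auxiliary maximal antichain $\{\nu_i\}_{i=1}^l$ (of the same length, which is possible since $l \equiv 1 \pmod{n-1}$) and define two auxiliary $V_n$ elements
\[
g_1 : \eta_i^{(1)}\delta \mapsto \nu_i \delta, \qquad g_2 : \eta_{\sigma(i)}^{(2)}\delta \mapsto \nu_i \delta.
\]
Applying Lemma~\ref{lemma:buildingcompletionsfromothers} to $h$ with $g_1$ and then with $g_2$ yields completions $f_1, f_2 \in \T{H}(T)$ given by $f_1 : \eta_i^{(1)}\delta \mapsto \nu_i (\delta)h$ and $f_2 : \eta_{\sigma(i)}^{(2)}\delta \mapsto \nu_i (\delta)h$. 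A direct right-action computation then gives $(\eta_i^{(1)}\delta) f_1 f_2^{-1} = (\nu_i (\delta) h) f_2^{-1} = \eta_{\sigma(i)}^{(2)}\delta$, so $f_1 f_2^{-1} = v$. Hence $v \in \T{H}(T)$, proving $V_n \leq \T{H}(T)$.

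Given proper non-empty clopen sets $E_1, E_2 \subset \CCn$, flexibility is then a standard exercise in $V_n$: write $E_1$ as a finite disjoint union of cones $[u_1], \ldots, [u_r]$, pick any cone $[w] \subset E_2$, and after a common refinement of both sides (refining cones into $n$ pieces preserves residues modulo $n-1$), produce an element $v \in V_n$ that maps each $[u_i]$ into a sub-cone of $[w]$, so that $(E_1)v \subset [w] \subset E_2$. Since $V_n \leq \T{H}(T)$, this $v$ already lies in $\T{H}(T)$, and flexibility of $\T{H}(T)$ follows.

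The main subtlety is the bookkeeping of the permutation $\sigma$ when defining $g_2$ and ensuring that the two lemma applications use the \emph{same} auxiliary antichain $\{\nu_i\}$, so that the "twisting by $h$" in $f_1$ and in $f_2$ cancels in the product $f_1 f_2^{-1}$. Once this is handled, no further ingredient is required beyond Lemma~\ref{lemma:buildingcompletionsfromothers} and the elementary flexibility of $V_n$.
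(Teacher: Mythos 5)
Your proposal is correct, and it goes by a genuinely different route than the paper. The paper's proof is a one-shot construction: given proper non-empty clopen $E_1, E_2$, it picks $g \in V_n$ with $(E_1)g \subset E_2$, written via antichains $\{\eta_i\}, \{\nu_i\}$, and then applies Lemma~\ref{lemma:buildingcompletionsfromothers} once to build a completion $f : \eta_i\delta \mapsto \nu_i(\delta)h$, observing that $f$ still carries each $[\eta_i]$ into $[\nu_i]$ and hence $E_1$ into $E_2$. You instead prove the stronger claim that $V_n \leq \T{H}(T)$ whenever $T$ has a single completion $h \in H(\CCn)$: for each $v \in V_n$ you apply the lemma \emph{twice} with a common auxiliary antichain $\{\nu_i\}$, producing $f_1 : \eta_i^{(1)}\delta \mapsto \nu_i(\delta)h$ and $f_2 : \eta_{\sigma(i)}^{(2)}\delta \mapsto \nu_i(\delta)h$ in $\T{H}(T)$, so that the $h$-twist cancels in $f_1 f_2^{-1} = v$. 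This is a valid cancellation argument; the only things to check are that the auxiliary antichain can be taken of the same length (true since $l \equiv 1 \pmod{n-1}$) and that $f_1, f_2$ lie in $H(\CCn)$ (true since $h$ does and the lemma's construction preserves effectiveness for $\CCn$). What your approach buys: it sharpens Proposition~\ref{proposition: subgroup of completions}, which the paper only proves under the extra hypothesis that $T$ has a state inducing the identity homeomorphism. Your argument shows the identity-state hypothesis is superfluous for the containment $V_n \leq \T{H}(T)$, with flexibility then an immediate corollary of the flexibility of $V_n$. The paper's proof is slightly leaner (one lemma application rather than two and no inversion) but delivers only the flexibility conclusion.
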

\begin{proof} 
Let $E_1$ and $E_2$. Let $g \in V_{n}$ be an element  such that $(E_1)g \subset E_2$. Let $\{\eta_1, \ldots, \eta_r \}$, $\{\nu_1, \ldots, \nu_r \}$ be complete antichains such that $(\eta_i\delta)g = \nu_i \delta$ for $1 \le i \le r$ and $\delta \in \CCn$. We may assume since $(E_1)g \subset E_2$, that there is some $1 \le j \le r$ and $0 \le k \le r-j$ such that $[\eta_j], [\eta_{j+1}], \ldots,  [\eta_{j+k}]$ is an open cover of $E_1$ and for $j \le l \le j+k$ $[\nu_l] \subset E_2$. Let $h$ be a completion of $T$. Then the map $f$ given by $\eta_i \delta \mapsto \nu_i (\delta)h$ is also a completion of $T$, by Lemma~\ref{lemma:buildingcompletionsfromothers}, and $(E_1)f \subset E_2$. 
\end{proof}

Note that the group $\T{H}(T)$ is not full in general. Let $T$ be the single state transducer over the alphabet $X_{2}$ that transforms the input $1$ to $0$ and $0$ to $1$. Observe that an element $h$ of $\T{H}(T)$ is either in $V$, or there is a clopen  cover $\{[\nu]_{i} \mid \ 1 \le i \le l\}$ such that, for $\rho \in \mathfrak{C}_{2}$, $(\nu_i\rho)h = \delta (\rho)T$.  Therefore, the homeomorphism $g$ of $\C{C}{2}$ mapping $0\delta \to 0 \delta$ and $1\delta \to 1 (\delta)T$ for $\delta \in \C{C}{2}$ is not an element of $\T{H}(T)$. Elements of $\T{H}(T)$ are either elements of $V_{2}$ or act as $T$ on a clopen partition of $\C{C}{2}$.

By Remark~\ref{Remark:VnTisfullandflexible}, $V_{n}(T)$ is the group of all homeomorphisms of $\CCn$ that locally agree with $\T{H}(T)$ for synchronous $T$. As we have seen, $V_{n}(T)$ is not equal to $\T{H}(T)$ is general, e.g.~in the previous paragraph $g\in V_n(T)$ but $g\not\in\T{H}(T)$. However, whenever $T$ is synchronous and has a state inducing the identity homeomorphism, then $V_{n}(T)$ is in fact equal to $\T{H}(T)$. To consider the asynchronous case, we make the following definition:

\begin{definition}\label{Def:fullgroupforasynchronoustransducer}
Let $T  \in \mathfrak{P}_{n}$ and suppose that $T$ has one completion in $H(\CCn)$,  then set $\T{F}(T)$ to be the group of all homeomorphisms which locally agree with $\T{H}(T)$.
\end{definition}

As a corollary of Theorem~\ref{Thm:fullflexiblegroupshavesimplederivedsubgroups} we have:

\begin{corollary}
Let $T \in \mathfrak{P}_{n}$ and suppose that $T$ has a completion in $H(\CCn)$. The group $[\T{F}(T), \T{F}(T)]$ is simple.
\end{corollary}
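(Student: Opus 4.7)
The plan is to invoke Theorem~\ref{Thm:fullflexiblegroupshavesimplederivedsubgroups}, so the whole proof reduces to verifying that $\T{F}(T)$ is both full and flexible as a subgroup of $H(\CCn)$. Since the hypothesis guarantees that $T$ has at least one completion in $H(\CCn)$, the group $\T{H}(T)$ is non-empty and the definition of $\T{F}(T)$ makes sense. Note that $\T{H}(T) \subseteq \T{F}(T)$ by definition, because every element of $\T{H}(T)$ trivially locally agrees with itself.

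Flexibility is immediate: by Proposition~\ref{Prop:groupofcompletionsisfelxible}, the group $\T{H}(T)$ is flexible, meaning that for any two proper non-empty clopen subsets $E_1,E_2\subseteq \CCn$ there is $g\in\T{H}(T)$ with $(E_1)g\subset E_2$. Since $\T{H}(T)\leq \T{F}(T)$, the same $g$ witnesses flexibility of $\T{F}(T)$.

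The main step is showing $\T{F}(T)$ is full, i.e.\ that local agreement with $\T{F}(T)$ already implies local agreement with $\T{H}(T)$. Suppose $h\in H(\CCn)$ locally agrees with $\T{F}(T)$ and fix $x\in\CCn$. Then there is an open neighbourhood $U$ of $x$ and an element $g\in\T{F}(T)$ such that $h\vert_{U} = g\vert_{U}$. But $g$ itself locally agrees with $\T{H}(T)$, so there is a smaller neighbourhood $U'\subseteq U$ of $x$ and some $f\in\T{H}(T)$ with $g\vert_{U'} = f\vert_{U'}$. Combining the two equalities gives $h\vert_{U'} = f\vert_{U'}$, so $h$ locally agrees with $\T{H}(T)$ at every point. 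By Remark~\ref{Remark:localhomeosagreeonfiniteclopencover} and compactness of $\CCn$ one even gets a finite clopen cover on which $h$ coincides with elements of $\T{H}(T)$, and by Definition~\ref{Def:fullgroupforasynchronoustransducer} this places $h$ in $\T{F}(T)$.

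With fullness and flexibility of $\T{F}(T)$ in hand, Theorem~\ref{Thm:fullflexiblegroupshavesimplederivedsubgroups} directly yields that $[\T{F}(T),\T{F}(T)]$ is simple. I do not anticipate any real obstacle here; the only mildly subtle point is the transitivity-of-local-agreement step used for fullness, but this is a routine shrink-the-neighbourhood argument and nothing in the asynchronous nature of $T$ interferes with it.
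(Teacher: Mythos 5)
Your proof is correct and is essentially the argument the paper has in mind: the corollary is stated without proof, immediately after Theorem~\ref{Thm:fullflexiblegroupshavesimplederivedsubgroups}, and the intended reasoning is exactly to verify that $\T{F}(T)$ is full and flexible. Flexibility is inherited from $\T{H}(T)$ via Proposition~\ref{Prop:groupofcompletionsisfelxible}, and fullness follows from the transitivity of local agreement together with the definition of $\T{F}(T)$, just as you write (the detour through Remark~\ref{Remark:localhomeosagreeonfiniteclopencover} at the end is harmless but unnecessary, since local agreement with $\T{H}(T)$ already places $h$ in $\T{F}(T)$ by Definition~\ref{Def:fullgroupforasynchronoustransducer}).
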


For the remainder of the section we investigate simple criteria which imply the equality $\T{F}(T) = \T{H}(T)$.

The following construction allows us to give a more explicit description of the group $\T{H}(T)$.  Our construction generalises the inverse to partially invertible transducers, as the name suggests.

Let $T=\langle X_n,Q_T, \pi_T,\lambda_T\rangle$ be a partially invertible transducer. For each state $q$ of $T$, since $\im(T_q)$ is clopen, we may find  clopen subsets  $[\eta_{1,q}] \ldots, [\eta_{r_q,q}]$ of $\CCn$  such that $\bigcup_{1 \le i \le r_q} [\eta_{i, q}]  = \im(T_q)$.  We may further assume that $\{[\eta_{1,q}] \ldots, [\eta_{r_q,q}] \}$ is a minimal set of basic open sets with this property. This means that if $q$ a homeomorphism state of $T$, then  $r_q = 1$, and $\eta_{1,q} = \epsilon$. 

We recall to remind the reader, that for $q\in Q_T$ and $\alpha\in X_n^*$, that $\T{L}_{q}(\alpha)$ is the longest common prefix of the words in $T_q([\alpha])$.  Then for $q$ in $Q_{T}$ and $1 \le i \le r_{q}$,  let $\nu_{i,q} = \T{L}_{q}(\eta_{i,q})$, $w_{i,q}= (\eta - \lambda_{T}(\nu_{i,q}, q))$ and $p_{i,q} = \pi_{T}(\nu_{i,q}, q)$. Notice that if $q$ is a homeomorphism state of $T$, then $w_{1,q} = \epsilon$ and $p_{1,q} = q$. Set $Q'_{0} = \{(w_{i,q}, p_{i,q}) \mid  q \in Q_{T}, 1 \le i \le r_{q} \}$. We observe that for $q \in Q$ and $1 \le i \le r_q$, $[w_{i,q}] \subset \im(T_{p_{i,q}})$ and $\T{L}_{p}(w_{i,q}) = \epsilon$. Inductively, for $k \in \N\backslash\{0\}$, set 
$$Q'_{k} = \{(wx - \lambda_{T}(\T{L}_{p}(wx),p), \pi_{T}(\T{L}_{p}(wx),p))\mid x \in \xn \mbox{ and }(w,p) \in Q'_{k-1}  \} \cup Q'_{k-1}.$$

For all $k$, a simple induction argument shows that if $(w,p) \in Q'_{k}$, then $\T{L}_{p}(w) = \epsilon$ and $[w]  \subset \im(T_{p})$. Since  for all states $p \in Q_{T}$, $T_{p}$ is injective and has clopen image, it follows that there are only finitely many words $w \in \xns$ such that $[w] \subset \im(T_{p})$, and $\T{L}_{p}(w) = \epsilon$. Therefore, there is a $j \in \N$ such that  $Q'_{j}= Q'_{j+1}$. Set $Q_{T'}:= Q'_{j}$. 

We form a transducer $T' = \gen{ \xn, Q_{T'}, \pi_{T'}, \lambda_{T'}}$ with $\pi_{T'}$ and $\lambda_{T'}$ obeying the following rules. For $x \in \xn$ and $(w,p) \in Q_{T'}$ we have, $\tr{T'}(x,(w,p)) = (wx - \lambda_{T'}(\T{L}_{p}(wx),p), \pi_{T}(\T{L}_{p}(wx),p))$ and $\out{T'}(x, (w,p)) = \T{L}_{p}(wx)$. 

We make the following observations:
\begin{enumerate}
\item For $\delta \in \CCn$ and $(w,p) \in Q_{T'}$,  by transfinite induction we have, $\out{T'}(\delta, (w,p)) = (w\delta)T^{-1}_{p}$. Thus, for $(w,p) \in Q_{T'}$, $T'_{(w,p)}$ is injective.
\item  Since each state of $T$ is injective and has clopen image, and since for each $(w,p) \in Q_{T'}$, $[w] \subset \im(T_{p})$, then $\im(T'_{(w,p)})$ also has clopen image.
\end{enumerate}

\begin{definition}\label{Def:partialinverse}
Let $T \in \mathfrak{P}_{n}$, let $T'$ be the transducer constructed as above. We call $T'$ the \emph{partial inverse of $T$}.
\end{definition}

We have the following lemma.

\begin{lemma}\label{Lemma:inverseofcompletioniscompletionofpartialinverse}
Let $T \in \mathfrak{P}_{n}$. Let $h$ be any completion of $T$ in $H(\CCn)$, then $h^{-1}$ is a completion of $T'$.
\end{lemma}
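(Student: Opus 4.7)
The plan is to identify $h^{-1}$ explicitly as a completion of $T'$ by producing both the requisite maximal antichain and viable combination. Write $h$ via a viable combination $((\rho_1,\dots,\rho_l),(p_1,\dots,p_l))$ of $T$ together with a maximal antichain $\{\eta_1,\dots,\eta_l\}$, so that $(\eta_i\chi)h=\rho_i(\chi)T_{p_i}$ for all $\chi\in\CCn$. For each state $p_i$ appearing, I would invoke the minimal disjoint basic-open decomposition $\{[\eta_{j,p_i}]\}_{j=1}^{r_{p_i}}$ of $\im(T_{p_i})$ furnished by the construction of $T'$. The products $\rho_i\eta_{j,p_i}$, ranging over all $i,j$, then form a maximal antichain of $\CCn$, since the clopen sets $\rho_i\im(T_{p_i})$ partition $\CCn$, and each $\rho_i\im(T_{p_i})$ is partitioned by the cones $\rho_i[\eta_{j,p_i}]$.

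Next I would pin down the action of $h^{-1}$ on a representative cone $[\rho_i\eta_{j,p_i}]$. Using the factorisation $\eta_{j,p_i}=\lambda_T(\nu_{j,p_i},p_i)\,w_{j,p_i}$ together with $\pi_T(\nu_{j,p_i},p_i)=p_{j,p_i}$, the unique $T_{p_i}$-preimage of $\eta_{j,p_i}\delta$ for $\delta\in\CCn$ takes the form $\nu_{j,p_i}\mu$, where $\mu$ is characterised by $(\mu)T_{p_{j,p_i}}=w_{j,p_i}\delta$. By the first observation immediately following the construction of $T'$, this $\mu$ equals $(\delta)T'_{(w_{j,p_i},p_{j,p_i})}$, and hence
\[
(\rho_i\eta_{j,p_i}\delta)h^{-1}=\eta_i\nu_{j,p_i}(\delta)T'_{(w_{j,p_i},p_{j,p_i})}.
\]
Since $(w_{j,p_i},p_{j,p_i})\in Q'_{0}\subseteq Q_{T'}$, this already writes $h^{-1}$ in the shape demanded of a completion of $T'$.

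It then remains to check that $\bigl((\eta_i\nu_{j,p_i})_{i,j},((w_{j,p_i},p_{j,p_i}))_{i,j}\bigr)$ is a viable combination of $T'$. I would argue that $\nu_{j,p_i}\,\im(T'_{(w_{j,p_i},p_{j,p_i})})$ is exactly the $T_{p_i}$-preimage of $[\eta_{j,p_i}]$. Taking the union over $j$ and using $\bigsqcup_j[\eta_{j,p_i}]=\im(T_{p_i})$ together with the fact that $T_{p_i}$ is a bijection onto its image yields $\CCn$; prepending $\eta_i$ gives the cone $[\eta_i]$; and a final union over $i$ reconstitutes $\CCn$. Disjointness propagates from the disjointness of the $[\eta_{j,p_i}]$ inside $\im(T_{p_i})$ (using injectivity of $T_{p_i}$) and from the disjointness of the cones $[\eta_i]$.

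The main obstacle will be the bookkeeping in the second step: one must verify cleanly that the split $\eta_{j,p_i}=\lambda_T(\nu_{j,p_i},p_i)\,w_{j,p_i}$ interacts correctly with $T_{p_i}$, so that after peeling off the prefix $\nu_{j,p_i}$ from the preimage, the remaining tail is precisely the output $(\delta)T'_{(w_{j,p_i},p_{j,p_i})}$ prescribed by the partial inverse. Once this local identification is secured, the viable-combination verification is a direct matching of disjoint unions driven by $h$ being a bijection.
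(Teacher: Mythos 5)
Your proof is correct, and it takes a genuinely different route from the paper's. The paper argues indirectly: it realises $h$ as an initial transducer $A_{q_0}$ whose states include $Q_T$ and which, after reading a long enough prefix, transitions into a state of $T$; it then applies the inversion algorithm of Proposition~\ref{Proposition:inversionalgorithm} to obtain $B_{(\epsilon, q_0)}$, which realises $h^{-1}$, and tracks the inversion to conclude that after reading a long enough prefix $B$ transitions into a state of $T'$, whence $h^{-1}$ is a completion of $T'$. You instead verify the definition of completion directly: you exhibit the maximal antichain $\{\rho_i\eta_{j,p_i}\}$ refining the range antichain of $h$ by the basic-open decomposition $\{[\eta_{j,p_i}]\}$ of $\im(T_{p_i})$ used in the construction of $T'$, compute $(\rho_i\eta_{j,p_i}\delta)h^{-1}=\eta_i\nu_{j,p_i}(\delta)T'_{(w_{j,p_i},p_{j,p_i})}$ via the identity $\lambda_{T'}(\delta,(w,p))=(w\delta)T_p^{-1}$, and check the viable-combination axioms by matching $\nu_{j,p_i}\im\bigl(T'_{(w_{j,p_i},p_{j,p_i})}\bigr)$ with $T_{p_i}^{-1}([\eta_{j,p_i}])$. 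Your approach has the virtue of being explicit and self-contained (producing the witnessing antichain and viable combination rather than invoking the inversion algorithm as a black box), and it makes visible how the data of $T'$ is precisely what is needed to invert a completion of $T$; the paper's argument is shorter but delegates the bookkeeping to Proposition~\ref{Proposition:inversionalgorithm}. One small point worth making explicit in your write-up: the split $\eta_{j,p_i}=\lambda_T(\nu_{j,p_i},p_i)\,w_{j,p_i}$ requires $\lambda_T(\nu_{j,p_i},p_i)$ to be a prefix of $\eta_{j,p_i}$ (and not a proper extension of it); this holds because every preimage of an element of $[\eta_{j,p_i}]$ under $T_{p_i}$ has prefix $\nu_{j,p_i}$, so $\lambda_T(\nu_{j,p_i},p_i)$ is a common prefix of $[\eta_{j,p_i}]$, and since $[\eta_{j,p_i}]$ is a full cone this forces $\lambda_T(\nu_{j,p_i},p_i)\preceq\eta_{j,p_i}$.
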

\begin{proof}
Let $T \in \mathfrak{P}_{n}$ and let $h$ be any completion of $T$ in $H(\CCn)$. Since there is a $k \in \N$ such that for any $\delta \in \CCn$, after processing a prefix the length $k$ prefix of $\delta$ $h$ acts on the remaining suffix as a state of $T$,  we observe that there is an invertible, initial transducer $A_{q_0} = \gen{ \xn, Q_{A}, \tr{A}, \out{A}}$ such that $Q_{T} \subset Q_{A}$ and $A_{q_0} : \CCn \to \CCn$ coincides with $h$. Moreover, for any word $\Gamma \in \xn^{k}$ and any state $q \in Q_{A}$, we have $\tr{A}(\Gamma,q) \in Q_{T}$. We now apply the inversion algorithm to $A_{q_0}$. Let $B_{\epsilon, q_0}$ be the initial transducer that is the inverse of $A_{q_0}$

Let $\Gamma \in \xn^{k}$ and let $p \in Q_{T}$ be such that  $\tr{A}(\Gamma, q_0) = p$ and $\out{A}(\Gamma, q_0) = \Xi$. Let $\eta \subset \im(T_{p})$ and $\nu =\T{L}_{q_0}(\Xi\eta)$. Since $h$ is injective, we must have that $\nu = \Gamma \T{L}_{p}(\eta)$. Therefore $(\Xi\eta - \out{A}(\nu,q_0), \tr{A}(\nu, q_0)) = (\eta - \out{T}(\nu, q_0), \tr{T}(\T{L}_{p}(\eta), p))$. Hence, in $B$ we have $\out{B}(\Xi\eta, (\epsilon, q_0)) = (\eta - \out{T}(\nu, q_0), \tr{T}(\T{L}_{p}(\eta), p))$. Therefore there is a $j \in \N$ such that after processing any word of length $j$, $B_{(\epsilon, q_0)}$ acts as a state of $T'$. Now since $B_{(\epsilon, q_0)}$ coincides with $h^{-1}$ in its action on $\CCn$, we conclude that $h^{-1}$ is a completion of $T'$. 
\end{proof}

\begin{remark}\label{Remark:relationbetweenT'andSwhenTisinvertible}
Let $T \in \mathfrak{P}_{n}$ and suppose there is a state $q_0 \in Q_{T}$ such that $T_{q_0}$ is an accessible and invertible transducer. Let $S_{(\epsilon, q_0)}$ be the inverse of $T$, then, by Lemma~\ref{Lemma:inverseofcompletioniscompletionofpartialinverse}, every state of $T'$ is a state of $S_{(\epsilon, q_0)}$ and vice versa since $(\epsilon, q_0)$ is a state of $T'$. Note that $TS$ is the full transducer product of $T$ and $S$, and even though $T_{q_0}$ and $S_{(\epsilon, q_0)}$ are invertible, $TS$ is non-initial and does not necessarily have a well defined inverse, however its partial inverse is defined. Observe that in the case that $T_{q_0}$ is asynchronous, then $TS$ has injective but not surjective states.
\end{remark}

For $T \in \mathfrak{P}_{n}$, let $\Pi(T)$ be the set of all finite products of $T$ and $T'$. By Lemma~\ref{Lemma:inverseofcompletioniscompletionofpartialinverse},  it follows that an element of $\T{F}(T)$ corresponds to a completion arising from a viable combination of a transducer equal to the disjoint union of finitely many elements of $\Pi(T)$. This is because for each element $h$ of $\T{H}(T)$, there is a $j \in \N$ and a transducer $U \in \Pi(T)$ such that given $\delta \in \CCn$, after replacing the length $j$ prefix of $\delta$, $h$ acts as a state of $U$. Thus it follows, in the case where $T$ is an invertible, synchronous transducer, that $V_{n}(T) = \T{F}(T)$. If in addition to being synchronous and invertible, $T$ possesses a state inducing the identity homeomorphism, then $V_{n}(T) = \T{H}(T) = \T{F}(T)$.  

We now introduce a criterion for which, given $T \in \mathfrak{P}_{n}$, synchronous or asynchronous, with a completion in $H(\CCn)$, $\T{H}(T) = \T{F}(T)$.

\begin{definition}\label{Def:contractingtransducers}
Let $T \in \mathfrak{P}_{n}$, we say that $T$ is contracting, if, given a transducer $U \in \Pi(T)$, for any state $q$ of $U$, $U_{q}$ is $\omega$-equivalent to an initial transducer $V_{p}$ for which there is a $j \in N$ such that after reading a word of length $j$, $V_{p}$ acts as a state of $T$.
\end{definition}

Contracting transducers have been well studied in the synchronous case and include the transducers generating the Grigorchuk group and the Gupta-Sidki group and some examples arising from iterated monodromy groups (\cite{BartholdiGrigorchukNekrashevych}). More generally, we show that for $T_{q_0}$ an  invertible (synchronous or asynchronous) and synchronizing transducer, with $S_{(\epsilon, p_0)}$ its inverse, then  $TS$ and $ST$ are both contracting.

First we require the following lemma.

\begin{lemma}\label{Lemma:statesof(TS)'areequaltostatesofTS}
Let $T_{q_0}$ be a  transducer without states of incomplete response, and inverse $S_{(\epsilon, p_0)}$. Let $TS$ be the product of $T$ and $S$ and $(TS)'$ be the partial inverse of $TS$. There is a $k \in \N$ such that for any state $p$ of $(TS)'$, and any word $\Gamma \in \xn^{k}$, $\tr{(TS)'}(\Gamma, p)$ is $\omega$-equivalent to a state of $TS$. 
\end{lemma}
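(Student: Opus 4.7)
The plan is to translate the partial inverse into functional form and to show, via the concrete structure of the inversion algorithm, that states of $(TS)'$ eventually coincide with states of $TS$ after reading a long enough prefix. A state $(w,(t,s))$ of $(TS)'$ realises the map $\delta\mapsto (w\delta)(TS)_{(t,s)}^{-1}=(w\delta)S_{s}^{-1}T_{t}^{-1}$, whereas a state $(t'',s'')$ of $TS$ realises the map $\delta\mapsto(\delta)T_{t''}S_{s''}$. Using Proposition~\ref{Proposition:inversionalgorithm} together with the hypothesis that $T_{q_0}$ has no states of incomplete response, I would extract explicit realisations: for a state $s=(\alpha,q)$ of $S$, the map $S_{s}^{-1}$ on its clopen image is $\rho\mapsto (\rho)T_{q}-\alpha$; and for a state $t$ of $T$, the map $T_{t}^{-1}$ on $\im(T_{t})$ coincides with the action of the state $(\epsilon,t)$ of $S$ (which is a legitimate state because the no-incomplete-response condition forces $\T{L}_{t}(\epsilon)=\epsilon$). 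Substituting, the function of $(w,(t,s))$ becomes $\delta\mapsto((w\delta)T_{q}-\alpha)S_{(\epsilon,t)}$; to match this with a state $(t'',s'')$ of $TS$ it suffices to take $s''=(\epsilon,t)$ and to find $t''$ with $T_{t''}(\delta)=(w\delta)T_{q}-\alpha$.

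After reading a prefix $\Gamma$ from $(w,(t,s))$, I would compute the resulting state of $(TS)'$ as $(w^{*},(t^{*},s^{*}))$ via the transition rule: with $\nu:=\T{L}_{(t,s)}(w\Gamma)$, $(t^{*},s^{*})=\pi_{TS}(\nu,(t,s))$ and $w^{*}=w\Gamma-\lambda_{TS}(\nu,(t,s))$. Since $TS\in\mathfrak{P}_{n}$ has finitely many states, each injective with clopen image, there exists a uniform $k$ such that whenever $|\Gamma|\ge k$ the length $|\nu|$ exceeds any prescribed threshold, independently of the starting state. Writing $s^{*}=(\alpha^{*},q^{*})$, the desired identification then takes the form $(w^{*}\delta)T_{q^{*}}-\alpha^{*}=T_{t''}(\delta)$. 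Expanding the left-hand side as $(\lambda_{T}(w^{*},q^{*})-\alpha^{*})\cdot(\delta)T_{\pi_{T}(w^{*},q^{*})}$, the no-incomplete-response condition forces the constant prefix $\lambda_{T}(w^{*},q^{*})-\alpha^{*}$ to vanish, giving the natural candidate $t''=\pi_{T}(w^{*},q^{*})$ and placing $(w^{*},(t^{*},s^{*}))$ in the $\omega$-equivalence class of the state $(t'',(\epsilon,t^{*}))$ of $TS$.

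The main obstacle is to verify the prefix identity $\lambda_{T}(w^{*},q^{*})=\alpha^{*}$ for sufficiently large $|\Gamma|$. Intuitively, $\alpha^{*}$ records the portion of the $T$-output past $\nu$ that has not yet been absorbed by $S$, while $w^{*}$ records the portion of the input to $(TS)'$ not yet produced by $TS$ starting at $(t,s)$; the claim is that these align once $\nu$ is long. Establishing this rigorously requires tracing both quantities through the inversion-algorithm recurrence, and uses the no-incomplete-response property (which is inherited by $S$ via the construction of the inverse) to rule out any residual mismatch. Once this identification is made for each of the finitely many states of $(TS)'$, taking $k$ to be the maximum of the thresholds required over those states yields the uniform bound asserted in the lemma.
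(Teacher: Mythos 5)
Your functional translation and reading of the inversion algorithm are fine --- $S_{(\alpha,q)}^{-1}$ is indeed $\rho\mapsto(\rho)T_{q}-\alpha$ --- but the target state you aim for in $TS$ is too restrictive, and the gap you flag is not one that ``tracing through the recurrence'' will close, because the prefix identity $\lambda_{T}(w^{*},q^{*})=\alpha^{*}$ is simply false in general. Set $\nu:=\lambda_{T}(w^{*},q^{*})-\alpha^{*}$ and $r:=\pi_{T}(w^{*},q^{*})$. The map realised by the $(TS)'$-state $(w^{*},(t^{*},s^{*}))$ is $\delta\mapsto(\nu(\delta)T_{r})T_{t^{*}}^{-1}$, and your choice $s''=(\epsilon,t^{*})$ would require a state $t''$ of $T$ with $(\delta)T_{t''}=\nu(\delta)T_{r}$. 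The no-incomplete-response hypothesis does \emph{not} ``force $\nu$ to vanish''; it only says that \emph{if} such a $t''$ existed then $\nu=\epsilon$, and gives no reason for $t''$ to exist. The only constraint the $(TS)'$-state places on $\nu$ is $\T{L}_{t^{*}}(\nu)=\epsilon$, which is perfectly compatible with $\nu\neq\epsilon$ when $T$ is asynchronous, and this does not improve as $|\Gamma|$ grows.

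The correct move is to change the second coordinate of the target: $(w^{*},(t^{*},s^{*}))$ is $\omega$-equivalent to the $TS$-state $(r,(\nu,t^{*}))$, and this requires no prefix identity at all --- it is exactly what the paper proves. The ``read a long enough $\Gamma$'' step earns its keep for a different reason than you suggest: the pair $(\nu,t^{*})$ satisfies $\T{L}_{t^{*}}(\nu)=\epsilon$ but need not satisfy $[\nu]\subset\im(T_{t^{*}})$, so it may fail to be an accessible state of $S_{(\epsilon,p_{0})}$; one then uses the clopen image of $\im(r)$ to show that after a further bounded number of transitions from $(\nu,t^{*})$ one lands in a genuine state of $S$, giving the uniform $k$. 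Relatedly, your parenthetical that $(\epsilon,t)$ is always a legitimate state of $S$ because no-incomplete-response forces $\T{L}_{t}(\epsilon)=\epsilon$ is off the mark: $\T{L}_{t}(\epsilon)=\epsilon$ holds trivially for any $t$ and is unrelated to incomplete response, and $(\epsilon,t)$ is an accessible state of $S$ only when $T_{t}$ is surjective, which fails for the non-homeomorphism states of an asynchronous $T$.
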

\begin{proof}
 Let $(w,s)$ be a state of $S_{(\epsilon, p_0)}$ and $t$ a  state of $T_{q_0}$. Let $v \in \xns$ be such that $[v] \subset  \im(TS_{(t,(w,s))})$ and $\T{L}_{(t,(w,s))}(v) = \epsilon$. Now observe that $[v]$ must be a subset of $\im(S_{(w,s)})$. Moreover  $\T{L}_{(t,(w,s))}(v)$ is the greatest common prefix of the set $\{ \rho \in \CCn \mid v \le (\rho)TS_{(t, (w,s))}  \}$. Let $\mathscr{S}:= \{ \rho \in \CCn \mid (\rho)S_{(w,s)} \in [v]  \}$, then $\T{L}_{(t,(w,s))}(v)$ is the greatest common prefix of the set $\{ \rho \in \CCn \mid v \le (\rho)T_{t} \in \mathscr{S}\}$. Let $\mathscr{T}:= \{ \rho \in \CCn \mid v \le (\rho)T_{t} \in \mathscr{S}\}$.

We observe that since $[v] \subset \im(S_{(w,s)})$, it must be the case that $\out{T}(v, s) = w \nu$ for some $\nu \in \xns$. This is because for all $x$, there is a word $u \in \xns$ such that $\T{L}_{s}(wu)$ has prefix $vx$ and $T$ is assumed to have no states of incomplete response. Let $\tr{T}(v,s) = r$, we must have $\mathscr{S}:= \{\nu\im(r) \}$ since all states of $S$ induce injective maps. Thus $\mathscr{T} = \{ \rho \in \CCn \mid (\rho)T_{t} \in \nu \im(r)\}$. Now since $\T{L}_{(t,(w,s))}(v) = \epsilon$, the greatest  common prefix of $\mathscr{T}$ is $\epsilon$ and we must have  $\T{L}_{t}(\nu) = \epsilon$.

We now show that the state $(v, (t,(w,s)))$ of $(TS)'$ is $\omega$-equivalent to the state $(r,(\nu,t))$. Note that $(\nu, t)$ might not satisfy $[\nu] \subset \im(T)$, and so might not be a state of $S_{(\epsilon, q_0)}$. However we may extend the transition and output functions of $S$ to the state $(\nu,t)$. We do this by defining, as in Proposition~\ref{Proposition:inversionalgorithm}, $\lambda_{S}(x, (\nu,t)) = \T{L}_{t}(\nu x)$ and $\pi_{S}(x, (\nu,t)) = (\nu x - \lambda_{T}(\T{L}_{t}(\nu x),t), \pi_{T}(\T{L}_{t}(\nu x),t))$ for $x \in \xn$, since $\T{L}_{t}(\nu) = \epsilon$. 

Let $\gamma \in \xnp$ we compute $\out{(TS)'}(\gamma, (v, (t,(w,s)))) = \T{L}_{(t,(w,s))}(v\gamma)$. By the arguments above, we see that $\T{L}_{(t,(w,s))}(v\gamma)$ is the greatest common prefix of the set $\{\rho \in \CCn \mid (\rho)T_{t} \in \nu\mu\im{r'}\}$, where $w\nu\mu = \out{T}(v\gamma,s)$ and $r' = \tr{T}(v\gamma,s)$. Let $\phi$ be the greatest common prefix of the set $\{\rho \in \CCn \mid (\rho)T_{t} \in \nu\mu\im{r'}\}$. As $\lambda_{T}(\gamma,r) = \mu$, we see that $\T{L}_{t}(\nu\mu)$ is a prefix of $\phi$. Moreover since $\nu\im(r)$ is a subset of $\im(r)$, and $T_{t}$ is injective, then as $\gamma \in \xnp$ increases in length we see that $\T{L}_{t}(\nu\out{T}(\gamma, r))$ also increases in length. Thus, by transfinite induction, we conclude that for $\rho \in \CCn$, $(\rho)(TS)'_{(v,(t,(w,s)))} = (\rho)TS_{(r,(\nu,t))}$.

In the case that $(\nu,t)$ is not accessible from $(\epsilon, q_0)$, observe that since $\im(r)$ is clopen, then $\nu \im(r)$ is also clopen and so there is a length $j \in \N$ such that for all inputs $\gamma \in \xn^{j}$, 
$[\nu\out{T}(\gamma,r)] \subset \im(t)$. Therefore $(\nu\out{T}(\gamma,r) - \out{T}(\T{L}_{t}(\nu\out{T}(\gamma,r)),t), \tr{T}(\T{L}_{t}(\nu\out{T}(\gamma,r)),t))$ is a state of 
$T'$ and so a state of $S_{(\epsilon, q_0)}$ by Remark~\ref{Remark:relationbetweenT'andSwhenTisinvertible}.
\end{proof}

\begin{proposition}\label{Proposition:TSandSTcontractingifTissycnh}
Let $T_{q_0}$ be a  synchronizing transducer without states of incomplete response and inverse $S_{(\epsilon, p_0)}$, then  $TS$ and $ST$ are both contracting.
\end{proposition}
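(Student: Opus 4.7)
The plan is to prove both statements by simultaneous induction on the length of $U$, viewed as a product of factors drawn from $\{TS,(TS)'\}$ in the $TS$ case and from $\{ST,(ST)'\}$ in the $ST$ case. For length one, the pure-$TS$ (respectively pure-$ST$) case is tautological, while the $(TS)'$ case is exactly Lemma~\ref{Lemma:statesof(TS)'areequaltostatesofTS}; an entirely parallel argument, using the symmetric construction of the partial inverse, handles $(ST)'$. In the inductive step, write $U=U'\cdot X$ with $X$ a single factor; the inductive hypothesis replaces $U'$ by a state of $TS$ after reading some fixed-length prefix, and the base case further replaces $X$ by $TS$ if $X=(TS)'$. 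The problem therefore reduces to showing that any state of $TS\cdot TS$ becomes $\omega$-equivalent to a state of $TS$ after reading enough input, and likewise for $ST\cdot ST$.

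For $TS\cdot TS$ with state $((t_1,s_1),(t_2,s_2))$, the key observation is that the middle pair $(s_1,t_2)$ evolves exactly as a state of $ST$ when its input is taken to be the output of the initial $T_{t_1}$-factor. Writing $s_1=(w,t_0)$ as in the inversion algorithm, one checks directly from the formula for $\pi_S$ that the $T$-state $t_0$ embedded inside $s_1$ and the standalone $T$-state $t_2$ receive the \emph{same} cumulative $T$-input, namely the word being processed through $T_{t_2}$. Because $T$ is synchronizing, once that cumulative input exceeds the synchronizing level both $T$-components collapse to a common core state $t^*$, and the middle then has the form $((w',t^*),t^*)$, acting on $\chi$ by $T_{t^*}\bigl(S_{(w',t^*)}(\chi)\bigr)=T_{t^*}\bigl(T_{t^*}^{-1}(w'\chi)\bigr)=w'\chi$, a prefix-addition by $w'$. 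This $w'$ is then absorbed into the outer $S$-factor via a single read, so the tail action of $TS\cdot TS$ becomes $TS_{(t_1^*,\pi_S(w',s_2^*))}$, a single state of $TS$.

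The $ST\cdot ST$ reduction is dual: the middle pair is a state of $TS$, and after the matching $T$-components reach a common core state the middle acts as a prefix-addition that is absorbed into the outer factors. The main technical obstacle I foresee is to verify that reading a sufficiently long prefix of the product's input really does force the cumulative $T$-input seen by the middle $T$-components to exceed the synchronizing level of $T$; this is where the no-states-of-incomplete-response hypothesis plays an essential role, since without it the $S$-transitions could accumulate $w$ indefinitely without consuming any input into genuine $T$-transitions, and the argument for matching the $T$-components would fail. The secondary bookkeeping — checking that the absorbed state $\pi_S(w',s_2^*)$ is a legitimate (possibly extended) state of $S$ — follows the same pattern as the extension of $S$ discussed in the proof of Lemma~\ref{Lemma:statesof(TS)'areequaltostatesofTS}.
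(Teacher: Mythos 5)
Your inductive framework and your treatment of the $TS\cdot TS$ case are sound and match the paper's key idea: in $T_{t_1}S_{s_1}T_{t_2}S_{s_2}$ the \emph{middle} factor $S_{s_1}T_{t_2}$ is a state of $ST$, the internal $T$-state of $s_1$ and the standalone $t_2$ both consume the same word (the output of $S_{s_1}$, which is also the input to $T_{t_2}$), so synchronization drives them to a common core state and the middle becomes a prefix addition absorbable into the outer $S$-factor. However, the claimed ``dual'' argument for $ST\cdot ST$ is genuinely wrong, not merely symmetric bookkeeping left to the reader. In $S_{s_1}T_{t_1}S_{s_2}T_{t_2}$ the middle factor $T_{t_1}S_{s_2}$ is a state of $TS$, and if $s_2=(w_2,q_2)$ then $t_1$ is driven by the \emph{input} to $T_{t_1}$ (namely $S_{s_1}(\rho)$), whereas $q_2$ is driven by the \emph{output} of $S_{s_2}$, which is roughly $T_{q_2}^{-1}\bigl(w_2\,T_{t_1}(S_{s_1}(\rho))\bigr)$. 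These are different words, so synchronization does not identify $t_1$ and $q_2$. Indeed Example~\ref{example:synchronizingovergroup} gives a concrete counterexample: there the $TS$-state $(a,b^{-1})$ induces the bit-flip homeomorphism, transitions only into other bit-flip states of $TS$, and is therefore never a prefix addition no matter how much input is read. So the $TS$-middle does not collapse, and the ``absorbed into the outer factors'' step has nothing to absorb.

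The repair for $ST\cdot ST$ is different in kind, not just in notation: one should not look at the middle pair at all, but at each \emph{individual} factor $ST_{(s_i,t_i)}$. Inside a single such factor, with $s_i=(w_i,q_i)$, both $q_i$ and $t_i$ are driven by $S_{s_i}$'s output, which is exactly $T_{t_i}$'s input; here the two $T$-components really do see the same word, synchronization collapses them, and $ST_{(s_i,t_i)}$ becomes a prefix addition (this is precisely the paper's opening observation that $ST_{(s,t)}$ is a prefix replacement map). Products of prefix additions are prefix additions, and a prefix addition by $w$ is $\omega$-equivalent to the (possibly extended) state $((w,t^*),t^*)$ of $ST$. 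Once you substitute this per-factor argument in place of the spurious middle-pair collapse, your induction goes through for $ST$ as well; you should also state explicitly that the dual of Lemma~\ref{Lemma:statesof(TS)'areequaltostatesofTS} for $(ST)'$ needs $S$ to have no states of incomplete response with $T$ as its inverse, which holds here because $S$ is produced by the inversion algorithm from a minimal $T$.
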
 
\begin{proof}
First observe that given an state $s \in Q_{S}$ and $t \in Q_{T}$, then $ST_{(s,t)}$ is a prefix replacement map inducing a continuous function from $\CCn \to \CCn$. The proof follows precisely as in the proof of Proposition~\ref{pro:conjproducessubgroups}. Let $k$ be the synchronizing level of $T_{q_0}$ and let $j \in \N$ be such that for all words $\Gamma \in \xn^{j}$, $|\out{S}(\Gamma,s)| \ge k$. Let $ \gamma \in \xns$ be such that $\tr{S}(\gamma, p_0) = s$, and let $\Gamma \in \xn^{j}$ be arbitrary, then observe that $\tr{ST}(\gamma\Gamma,(p_0, q_0)) = \tr{ST}(\Gamma, (s,t))$. Let $(s',t':=\tr{ST}(\Gamma, (s,t))$, we have that $(s',t')$ is a state of  $ST_{(p_0, q_0)}$. Now since $TS_{(p_0, q_0)}$ is $\omega$-equivalent to the identity transducer, it follows that if $\Lambda$ is the greatest common prefix of the set $\{ (\delta)ST_{(s',t')} \mid \delta \in \CCn \}$, then for all $\Delta \in \xnp$,$\out{ST}(\Delta, (s',t')) - \Lambda = \Delta$. Thus we see that $ST_{(s',t')}$ is a prefix replacement map, that prepends the prefix $\Lambda$ to inputs.

Now consider a state, $p$, of a transducer in $\Pi(TS)$. It corresponds to a product $p_1p_2\ldots p_l$ of states of $(TS)$ and $(TS)'$. Now by Lemma~\ref{Lemma:statesof(TS)'areequaltostatesofTS}, there is a $j$ such that after processing any word of length $j$ from $p$, the resulting states is $\omega$-equivalent to a product $q_1q_2\ldots q_l$ of states of $(TS)$. As $q_1q_2\ldots q_l$ is a product of states of $(TS)$, it is $\omega$-equivalent to a product  $t_1(s_1t_2)\ldots(s_{l-1}t_{l})s_{l}$ where the $t_i$'s are states of $T$ and $s_i$'s are states of $S$. By arguments in the preceding paragraphs $(s_1t_2)\ldots(s_{l-1}t_{l})$ is a prefix replacement and so is $\omega$-equivalent to an initial, synchronizing transducer with trivial core. Hence, the product $t_1(s_1t_2)\ldots(s_{l-1}t_{l})s_{l}$ is $\omega$-equivalent to an initial transducer for which there is a fixed $j \in N$ such that after processing a word of length $j$, it acts as a state of $T$.
\end{proof}

We have the following result:

\begin{theorem}\label{Theorem:ifTiscontactingthenhaveequality}
Let $T \in \mathfrak{P}_{n}$ be a contracting transducer with at least one completion in $H(\CCn)$, then $\T{H}(T) = \T{F}(T)$.
\end{theorem}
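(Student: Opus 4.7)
The plan is to prove the non-trivial inclusion $\T{F}(T)\subseteq\T{H}(T)$; the reverse inclusion holds trivially because every element of $\T{H}(T)$ agrees with itself locally and so belongs to $\T{F}(T)$ by definition.

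The first task is to pin down the structure of a generic $g\in\T{H}(T)$. By Lemma~\ref{Lemma:inverseofcompletioniscompletionofpartialinverse} the inverse of a completion of $T$ is a completion of $T'$, so any element of $\T{H}(T)$ is a finite product of completions of $T$ and of $T'$. A direct induction on the number of factors, using only the definition of a completion (``replace a prefix then apply a state''), shows that for every such $g$ there is a clopen partition $\{[\nu_i]\}$ of $\CCn$, finite words $\sigma_i\in\xns$, and states $q_i$ of some transducers $U_i\in\Pi(T)$ such that $(\nu_i\chi)g=\sigma_i(\chi)U_{i,q_i}$ for every $\chi\in\CCn$ and every $i$. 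The key point in the induction is that the composition of two descriptions of this form still has this form, with the product transducer landing once more in $\Pi(T)$ by the closure property of Proposition~\ref{Prop: set of partially invertible transducers is closed}.

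Now let $h\in\T{F}(T)$. By Remark~\ref{Remark:localhomeosagreeonfiniteclopencover}, choose a finite clopen partition $\{[u_i]:1\le i\le r\}$ of $\CCn$ and elements $g_i\in\T{H}(T)$ with $h\vert_{[u_i]}=g_i\vert_{[u_i]}$. Applying the previous paragraph to each $g_i$ and refining the partition accordingly yields a clopen partition $\{[w_k]\}$ of $\CCn$ on whose blocks $h$ acts as a prefix replacement followed by the action of a state in some transducer belonging to $\Pi(T)$. Only finitely many such states appear, so the contracting hypothesis furnishes a uniform integer $j$ such that after reading any word of length $j$ each of these states is $\omega$-equivalent to an initial transducer whose action, after reading a further fixed prefix, is that of a single state of $T$. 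Extending each $w_k$ by the corresponding length-$j$ (and further fixed-length) suffixes and absorbing the resulting letters into the prefix being prepended, we obtain a clopen partition $\{[v_k]:1\le k\le l\}$ of $\CCn$, finite words $\rho_k\in\xns$, and states $p_k$ of $T$, with $(v_k\chi)h=\rho_k(\chi)T_{p_k}$ for all $\chi\in\CCn$.

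To conclude, I would verify that $((\rho_1,\ldots,\rho_l),(p_1,\ldots,p_l))$ is a viable combination for $T$ and that $h$ is the associated completion. Since $h$ is a homeomorphism and $\{[v_k]\}$ is a disjoint clopen cover of $\CCn$, the images $\rho_k\im(T_{p_k})$ are pairwise disjoint and cover $\CCn$, which is exactly the viability condition; the formula above then identifies $h$ with the completion built from this viable combination against the maximal antichain $\{v_k\}$, so $h\in\T{H}(T)$. The main obstacle I expect is the careful bookkeeping in the middle step: one must check that the ``prefix replacement plus state of a product in $\Pi(T)$'' descriptions really do close under composition, and then apply the contracting hypothesis uniformly across the finite family of product states that appears. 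The necessity of handling inverses (and hence states of $T'$) is precisely what makes the contracting condition indispensable here: without it, there is no mechanism to ensure that the final local description of $h$ is expressed purely in terms of states of $T$, which is the requirement for recognising $h$ as a completion of $T$ rather than merely of some larger product transducer.
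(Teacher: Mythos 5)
Your proof is correct and follows essentially the same line as the paper's: identify the structure of elements of $\T{H}(T)$ (and hence, via Remark~\ref{Remark:localhomeosagreeonfiniteclopencover}, of $\T{F}(T)$) as ``prefix replacement followed by a state of a transducer in $\Pi(T)$,'' invoke the contracting hypothesis (uniformly over the finitely many relevant states) to land back in states of $T$ after a further bounded prefix, and recognize the result as a viable combination for $T$. The paper relegates your induction on the number of factors to a brief informal discussion preceding the theorem, so your version simply spells out that step in more detail; note only that the closure of $\Pi(T)$ under products is definitional, with Proposition~\ref{Prop: set of partially invertible transducers is closed} serving to guarantee the product remains in $\mathfrak{P}_n$, which is what licenses the clopen-image refinement you need in the induction step.
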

\begin{proof}
It is clear that $\T{H}(T) \subset \T{F}(T)$. We show that $\T{F}(T) \subset \T{H}(T)$. By definition, for an element $h$ of $\T{F}(T)$, there is a $j \in N$ such that after processing an initial finite prefix it acts as a state of an element of $\Pi(T)$. Since $T$ is contracting, this means that there is a $k \in N$ such that after reading a word of length $k$, $h$ acts as an initial transducer $G_{p}$ which eventually process all inputs as a state of $T$.  In total, we see there is an $l \in N$ such that after processing any  word of length $l$, $h$ acts as a state of $T$ and so is a completion of $T$. 
\end{proof}

\begin{remark}
Notice that when $T$ is synchronous and invertible, we have $\T{H}(T) = \T{F}(T)$ whenever $T$ contains a state inducing the identity homeomorphism. Clearly the contracting condition in Theorem~\ref{Theorem:ifTiscontactingthenhaveequality} is quite strong, and so it is natural to ask if it can be replaced with a weaker condition. For instance does the condition that the identity homeomorphism is a completion of $T$ imply that $\T{H}(T) = \T{F}(T)$?
\end{remark}
\section{Open Problems}\label{Section: Questions}

\begin{enumerate}[label = {\bf{Q}\arabic*.}] 
\item   Is it true that $V_{\sim_A}\cong V_n$ if and only if there is a initial, invertible, synchronizing transducer $T_{q_0} = \gen{X_n, Q, \pi, \lambda}$ with inverse$S_{(\epsilon, q_0)} = \gen{X_n, Q, \pi, \lambda}$ where $A= \gen{X_n, Q', \pi', q_0}$? More generally, when is $V_{\sim_A}\cong V_{\sim_B}$?
\end{enumerate}

If $V_{\sim_A}$ acts in a locally dense fashion, then Rubin's Theorem \cite{Rubin} implies that isomorphisms between $V_{\sim_A}$ and $V_n$ must result from conjugation.  The question then becomes a matter of determining under what conditions $V_{\sim_A}$ acts in a locally dense fashion on $\CCn$.

\begin{enumerate}[label = {\bf{Q}\arabic*.}] 
  \setcounter{enumi}{1}
\item When does an automaton $A$ arise as the automaton obtained from the inverse of a synchronizing transducer when we ignore outputs (as in Theorem~\ref{thm:subgroups_asynchronous_case})?  
\end{enumerate}

It is easy to settle this question if we insist that $A$ must arise from the inverse of a \emph{synchronous} synchronizing transducer. This is because inversion does not change the structure of the underlying graph of synchronous transducer (viewing the automaton as a graph) and the underlying graph of a synchronous, synchronizing transducer has very distinct features. However, if we allow $A$ to arise from the inverse of a synchronizing, asynchronous transducer, then it is possible that in inverting we drastically alter the underlying graph structure.
\begin{enumerate}[label = {\bf{Q}\arabic*.}]
  \setcounter{enumi}{2} 
\item  What are the group theoretic properties of $V_{\sim}$ and $V_{\sim_A}$? Are these groups finitely generated or finitely presented?
\end{enumerate}
\begin{enumerate}[label = {\bf{Q}\arabic*.}]
  \setcounter{enumi}{3} 
\item What are the group theoretic properties of $V_{n}(T)\backslash \T{F}(T)$ for $T \in \mathfrak{P}_{n}$? Are they finitely generated? Are they finitely presented? Are they simple or almost simple?
\end{enumerate}

Recent work of Bleak and Hyde show that certain groups acting on Cantor space, including the Nekrashevych groups $V_n(G)$ for $G$ self-similar, are 2-generated. In particular, this shows $V_n(T)$ is 2-generated when $T$ is synchronous. This could be generalisable to the asynchronous case and the subgroups $V_{\sim_A}$. 

\begin{enumerate}[label = {\bf{Q}\arabic*.}]
  \setcounter{enumi}{4} 
\item For $T \in \mathfrak{P}_{n}$ when is $V_{n}(T)\cong V_{n}$?
\end{enumerate}

Theorem~\ref{theorem: overgroups asynchronous} demonstrates that when the transducer $R \in \mathfrak{P}_{n}$ can be expressed as the product $TS$ where $T$ is a minimal, finite, one-way synchronizing transducer and $S$ is its inverse then $V_{n}(TS)$ is an overgroup of $V_n$ isomorphic to $V_{n}$.  This begs the question as to when this is possible and if this is the only way an isomorphism may arise.

\bibliographystyle{amsplain}
\bibliography{references}
\vspace{1.5cm}
\begin{tabularx}{\textwidth}{XX}
   Casey Donoven & Feyishayo Olukoya \\
   Department of Mathematical Science & School Of Mathematics and Statistics\\
Binghamton University & University of St. Andrews \\
PO Box 6000 & St Andrews \\
Binghamton, New York 13902-6000 & Fife KY16 9SS \\
   USA & Scotland \\
   \texttt{cdonoven@binghamton.edu} & \texttt{fo55@st-andrews.ac.uk}
  
\end{tabularx}
\end{document}